\newsavebox{\auteurbm}
  {\small\slshape%
  \savebox{\auteurbm}{\upshape\sffamily#1}%
  \begin{flushleft}}
  {\\[4pt]\usebox{\auteurbm}
  \end{flushleft}\normalsize\upshape}
  \theoremstyle{definition}
  \newtheorem{defn}{Definition}[section]
  \newtheorem*{defn*}{Definition}
  \theoremstyle{plain}
  \newtheorem{thm}{Theorem}[section]
  \newtheorem{prop}{Proposition}[section]
  \newtheorem*{prop*}{Proposition}
  \newtheorem{lem}{Lemma}[section]
  \newtheorem{cor}{Corollary}[section]
   \newtheorem*{cor*}{Corollary}
  \newtheorem*{theo*}{Theorem}
  \newtheorem*{thm*}{Theorem}
  \theoremstyle{remark}
  \newtheorem{rem}{Remark}[section]
  \newtheorem{nota}{Notation}[section]
\newcommand{\R}{\mathbb{R}}
\newcommand{\A}{\mathcal{A}}
\newcommand{\B}{\mathcal{B}}
\newcommand{\C}{\mathcal{C}}
\DeclareMathOperator*{\F}{U}
\newcommand{\U}{\operatorname{\mathcal{U}}}
 \newcommand\N{\mathbb{N}}
 \newcommand\cat[1]{\textbf{#1}}
\newcommand{\Pa}{\mathcal{P}}
\DeclareMathOperator*{\vect}{\textbf{Gr}}
\newcommand{\colim}{\operatorname{colim}}
\title{Interaction decomposition for Hilbert spaces}
\author{Grégoire Sergeant-Perthuis}
\newcommand{\Split}{\cat{Split}}
\def\Plus{\small\texttt{+}}
\DeclareMathOperator{\core}{core}
\DeclareMathOperator{\pig}{\overset{ \underline{\perp}}{\bigoplus}}
\DeclareMathOperator{\com}{\mathcal{C}}
\DeclareMathOperator{\coeq}{\operatorname{Coeq}}
\newcommand{\f}{V} 
\newcommand{\ve}{\cat{Vect}}
\newcommand{\ph}{\cat{Philb}}
\newcommand{\h}{\cat{Hilb}}
\newcommand{\ih}{\cat{IHilb}}
\newcommand{\ip}{\cat{IPhilb}}
\newcommand{\Sub}{\text{Sub}}
\newcommand{\hG}{\hat{G}}
\newcommand{\hF}{\hat{F}} 
\newcommand{\Z}{\mathbb{Z}}
\newcommand{\im}{\operatorname{im}}
\newcommand{\id}{\operatorname{id}}
\newtheorem*{rep@theorem}{\rep@title}
\newcommand{\newreptheorem}[2]{%
\newenvironment{rep#1}[1]{%
 \def\rep@title{#2 \ref{##1}}%
 \begin{rep@theorem}}%
 {\end{rep@theorem}}}
  \theoremstyle{plain}
\newcommand{\mylabel}[2]{#2\def\@currentlabel{#2}\label{#1}}
\numberwithin{equation}{section}
\newcommand{\Vect}{\cat{Vect}}
\begin{document}

\begin{abstract}

The decomposition into interaction subspaces is an important result for graphical models \cite{Speed}, \cite{Lauritzen} and plays a central role for results on the linearized marginal problem \cite{Kellerer1964}; similarly the Chaos decomposition plays an important role in statistical physics at the thermodynamic limit \cite{Sinai} and in probability theory in general \cite{T.Levy}. We unify and extend both constructions by defining and characterizing decomposable functors from a well-founded poset to the category of Hilbert spaces, completing previous work on decomposable collections of vector subspaces \cite{GS2} and presheaves \cite{GS3}.

\noindent \textbf{MSC2020 subject classifications:} Primary 46M15
; secondary 06F25 \\
\noindent \textbf{Keywords.} Decomposition into interaction subspaces, Functors in Hilbert spaces, Chaos decomposition.

\end{abstract}

 \maketitle

\maketitle

\section{Introduction}

Similar constructions appear in statistical physics and for Graphical models \cite{Lauritzen1}, namely what we shall call an interaction decomposition and this is because both are interested in the same objects, Gibbs states. Indeed, in order to capture the interactions between a finite number of random variables, $(X_i\in E_i, i\in I)$, when modelizing a phenomenon, one can introduce the notion of potential.

\begin{defn}[Potential and Gibbs state]
Let $(E_i,i\in I)$ be a collection of finite sets over a finite set $I$, let for $a\subseteq I$, $E_a=\prod_{i\in a}E_i$ and let $p_a:E_I \to E_a$ denote the projection onto $E_a$. Let $V(a)$ be the space of $E_a$ cylindric functions, i.e. functions $f$ in $\R^{E_I}$ for which there is $\tilde{f}\in \R^{E_a}$ such that $f=\tilde{f}\pi_a$; this space is also called the $a-$factor subspace and can be identified to $\R^{E_a}$. 

A potential $\Phi=(\phi_a\in V(a),a \subseteq I)$ is a collection of cylindric functions. We will denote the power set of $I$ as $\Pa(I)$.\\

One can associate to any potential a probability law as follows,

\begin{equation}
P=\frac{e^{\sum_{a\subseteq I} \phi_a}}{\sum_{x\in E_I}e^{\sum_{a\subseteq I} \phi_a(x)}}
\end{equation}

We shall refer to such a potential as a Gibbs state with respect to the potential $\Phi$.\\
\end{defn}
%\vspace{0.5cm}

Let us note for any $\A\subseteq \Pa(I)$,  

\begin{equation}
H_\A=\{\sum_{a\in\A} \phi_a \vert \ \forall a\in \A, \ \phi_a\in V(a)\}
\end{equation}

These are called hierarchical model subspaces by Lauritzen in this reference book on Graphical Models \cite{Lauritzen} and the associated probability laws live in factorisation spaces, i.e. the space of positive functions that factorize according to $\A$, denoted as $G_\A$ for $\A\subseteq \Pa(I)$.\\

\subsection{Interaction decomposition for Graphical models}

One can decompose the space of all random variables into orthogonal bits from which one can rebuild the hierarchical model subspaces. Let $I$ be a finite set and let $E=\prod_{i\in I} E_i$ be a finite set; let us consider the canonical scalar product on $\R^{E_I}$, i.e. for any $f,g\in \R^{E_I}$, 

\begin{equation}
\langle f,g\rangle= \sum_{x\in E_i} f(x)g(x)
\end{equation}

Then the following result holds,

\begin{thm*}[Decomposition into interaction subspaces for hierarchical model subspaces]\label{i:interaction-decomposition}
The space of random variables, $\R^{E_I}$, can be decomposed into an othogonal direct sum of vector subspaces, $(S_a,a\subseteq I)$,

\begin{equation}
\R^{E_I}= \bigoplus_{a\subseteq I} S_a
\end{equation}

such that for any  lower set $\A\subseteq \Pa(I)$,

\begin{equation}
H_\A= \bigoplus_{a\in \A}S_a
\end{equation}

\end{thm*}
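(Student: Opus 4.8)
The plan is to exhibit the subspaces $S_a$ explicitly through the tensor structure of $\R^{E_I}$ and then read off both required identities by distributing a tensor product of two-term orthogonal sums. First I would use the canonical isometry $\R^{E_I}\cong\bigotimes_{i\in I}\R^{E_i}$, where the target carries the product of the counting inner products on each factor, matching the scalar product of the statement. In each factor I would split $\R^{E_i}=\R\mathbf{1}_i\oplus W_i$, where $\mathbf{1}_i$ is the constant function and $W_i=\{g\in\R^{E_i}\mid \sum_{x}g(x)=0\}$ is the subspace of mean-zero functions; this is an orthogonal splitting for the counting inner product, since $\langle\mathbf{1}_i,g\rangle=\sum_x g(x)$. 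Under the isometry the factor subspace becomes $V(a)\cong\bigotimes_{i\in a}\R^{E_i}\otimes\bigotimes_{i\notin a}\R\mathbf{1}_i$, and I would define $S_a=\bigotimes_{i\in a}W_i\otimes\bigotimes_{i\notin a}\R\mathbf{1}_i$.

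The global decomposition is then immediate: distributing the tensor product over the direct sums gives $\bigotimes_{i\in I}(\R\mathbf{1}_i\oplus W_i)=\bigoplus_{a\subseteq I}S_a$, and this sum is orthogonal because the two summands are orthogonal in each factor, so the tensor summands are pairwise orthogonal in $\R^{E_I}$. This establishes $\R^{E_I}=\bigoplus_{a\subseteq I}S_a$.

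Next I would prove the intermediate identity $V(a)=\bigoplus_{b\subseteq a}S_b$ by the same distribution, now applied only to the factors indexed by $a$: expanding $\bigotimes_{i\in a}(\R\mathbf{1}_i\oplus W_i)\otimes\bigotimes_{i\notin a}\R\mathbf{1}_i$ produces exactly the summands $S_b$ with $b\subseteq a$. Finally, for a lower set $\A$ I would write $H_\A=\sum_{a\in\A}V(a)=\sum_{a\in\A}\bigoplus_{b\subseteq a}S_b$. Because the $S_b$ are pairwise orthogonal, this sum equals the orthogonal direct sum of all $S_b$ with $b\subseteq a$ for some $a\in\A$; since $\A$ is down-closed, the index set of such $b$ is precisely $\A$, yielding $H_\A=\bigoplus_{b\in\A}S_b$.

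The step I expect to carry the real content is the last one: passing from the sum of subspaces $\sum_{a\in\A}V(a)$ to a genuine orthogonal direct sum of the $S_b$. This requires both that the family $(S_b)$ be pairwise orthogonal (so that a sum of selections equals the selection of the union) and that $\A$ be a lower set (so that the union of the down-sets of its elements is $\A$ itself); dropping down-closedness would replace $\A$ by its down-closure on the right-hand side. An equivalent, coordinate-free route would define $Q_a=\sum_{b\subseteq a}(-1)^{|a\setminus b|}P_a$ from the orthogonal projections $P_a$ onto $V(a)$, using the commutation relation $P_aP_b=P_{a\cap b}$ to check that the $Q_a$ are orthogonal projections summing to the identity with $\sum_{b\subseteq a}Q_b=P_a$; I would keep the tensor argument as the primary one, since it makes the orthogonality and the Möbius inversion manifest at once.
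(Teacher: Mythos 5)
Your proof is correct, but it takes a genuinely different route from the paper. The paper treats this statement as the classical result of Speed and Lauritzen: the interaction subspaces are defined intrinsically as $S_a = V(a)\cap\bigl(\sum_{b\subsetneq a}V(b)\bigr)^{\perp}$, the decomposition is obtained by induction on the poset of subsets (this is exactly the strategy the paper later generalizes, proving decomposability from the intersection property $\pi_a\pi_b=\pi(\hat a\cap\hat b)$ by transfinite induction over a well-founded poset), and the projections onto the $S_a$ are recovered by the M\"obius formula $s_a=\sum_{b\subseteq a}(-1)^{|a\setminus b|}\pi_b$. You instead exploit the tensor-product structure $\R^{E_I}\cong\bigotimes_{i\in I}\R^{E_i}$, split each factor orthogonally into constants plus mean-zero functions, and define $S_a=\bigotimes_{i\in a}W_i\otimes\bigotimes_{i\notin a}\R\mathbf{1}_i$ explicitly; pairwise orthogonality, $V(a)=\bigoplus_{b\subseteq a}S_b$, and $H_\A=\bigoplus_{a\in\A}S_a$ for a lower set $\A$ then all follow by distributing the tensor product and using down-closedness, exactly as you argue. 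What your approach buys: a completely explicit, self-contained description of the interaction subspaces (which coincide with the paper's $S_a$, since your $S_a$ lies in $V(a)$ and is orthogonal to every $V(b)$ with $b\subsetneq a$), with both the orthogonality and the M\"obius structure visible at a glance. What the paper's approach buys: it never uses the product structure of $E_I$, so it extends verbatim to arbitrary increasing families of Hilbert subspaces over well-founded posets satisfying the intersection property, which is the generalization the paper is after; your construction is inherently tied to factor spaces. One small slip in your closing remark: the candidate projection should read $Q_a=\sum_{b\subseteq a}(-1)^{|a\setminus b|}P_b$, with $P_b$ rather than $P_a$; as written the sum collapses to $0$ (or to $P_\emptyset$ when $a=\emptyset$) since $\sum_{b\subseteq a}(-1)^{|a\setminus b|}=0$ for $a\neq\emptyset$.
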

%\vspace{0.5cm}

For $a\subseteq I$, let us note $s_a$ the orthogonal projections onto $S_a$.\\

The Decomposition into interaction subspaces for hierarchical model subspaces is very useful when one wants to test if a (strictly positive) probability distribution is in a factorisation space, i.e. for $\A\subseteq \Pa(I)$,

\begin{equation}
P\in G_\A \ \iff\   \forall a\notin \A, \ s_a(\ln P)=0
\end{equation}

\subsection{Chaos decomposition in statistical physics}

A simpler, and older case of the interaction decomposition is the chaos decomposition for a $\N$-filtered collection of subspaces of the space of all random variables, $(H_n,n\in \N)$, with $H_n\subseteq H_m$ for $n\leq m$. The space of random variables is $L^2(\R^I,P)$ where $P$ is a Gaussian mesure and $I=\Z^d$. Let $n\in \N$, it represents a degree, let for any integer $m\leq n$, any $x:[1,m]\to I$, and any $\phi\in \R^I$,

\begin{equation}
\Psi(x)(\phi)=\prod_{i\in [1,n]} \phi(x_i)
\end{equation}

 $H_n$ is the Hilbert subspace generated by the $\Psi(x)$ of degree at least $n$ and indeed $H_n \subseteq H_{n+1}$.\\

The construction of a decomposition of the $(H_n,n\in \N)$ relies on the Hermite-Ito polynomials for Gaussian fields  (see Section \ref{chapitre-4-chaos-decomposition}). The aim of this construction is to define the space of observables, of potentials, when the number of random variables $I$ is not finite. It is an important construction in the field of statistical physics at the thermodynamic limit \cite{Sinai}.\\

\subsection{Aim and previous work}

In previous work \cite{GS2}, \cite{GS3} we have generalized the decomposition into interaction subspaces in two directions. The first one was to characterize collections of vector subspaces of a given vector space $V$ over a poset $\A$, $(\F(a)\subseteq V, a\in \A)$, such that there is a collection of vector subspaces of $(S_a,a\in \A)$ for which,

\begin{equation}
\forall a\in \A, \quad \F(a)= \bigoplus_{b\leq a} S_a
\end{equation}

The direct sum is not necessarily an orthogonal one. Such collection, $(\F(a),a\in \A)$, is called a decomposable collection of vector subspaces of $V$  (Definition 3.1 \cite{GS2}) and leads to the definition of decomposable functors. Let us denote $\A^{\Plus}=\A\oplus 1$ the poset that is the sum of $\A$ and the one element poset $1$ defined as the reunion of elements of $\A$ and $1$ with $1$ being final in $\A^{\Plus}$ and for any $a,b\in \A$,

\begin{equation}
b\leq_{\A^{\Plus}} a \implies b\leq_{\A} a
\end{equation}

$\A^{\Plus}$ is $\A$ with the addition of a maximal element. When $(\F(a),a\in \A)$ is decomposable so is $(V, \F(a), a\in \A)$ and there is $S_1$ such that,

\begin{equation}
    \F(1)= V= \bigoplus_{a\in \A^{\Plus}} S_a
\end{equation}

 There are in fact several possible decompositions, i.e. collections of subspaces $(S_a,a\in \A)$, for a given collection of vector subspaces $(\F(a),a\in \A)$; by considering the additional data of a collection of projectors $(\pi_a,a\in \A)$ onto the $(\F(a),a\in \A)$, in other words such that $\pi_a: V\to V$ is a projector onto $\F(a)$, one can distinguish the different decompositions. In this case one can define what a decomposable collection of projectors is (Definition 2.4 \cite{GS3}) by asking that, for $a\in \A$, $\pi_a$ is the projections from $\bigoplus_{b\in \A^{\Plus}} S_b$ to $\bigoplus_{b\leq a} S_b$. This notion of decomposable collection of projectors can be extended to the notion of decomposable collection of presheaves (Definition 4.3 \cite{GS3}).
 
 In both cases the characterization of decomposable objects relies on properties that follow the same spirit but are different in their implementation (Definition 3.3 \cite{GS2}, 4.7 \cite{GS3}), this is why we decided to keep the same denomination, `intersection properties', for both properties. The intersection property for collections of vector subspaces has an interesting interpretation when restricted to the case of factor spaces, as it is simply an extension of the Bayesian intersection property (see \cite{GS1}) and decomposable objects appear when considering classical problems in the more general setting of the study of probability theory with a categorical flavour, initiated by Giry \cite{Giry} and Lawvere \cite{Lawvere}. For example results of Kellerer \cite{Kellerer1964} for the marginal problem can be extended when considering the generalization of the linearized marginal problem thanks to decomposable functors \cite{GSDB} and the generalized version of Gibbs states for diagrams in the category of Markov kernels are completely characterized for decomposable presheaves \cite{GSthese}.

In this document we want to extend one step further the link between decomposability and a notion of intersection property. We want to unify both of the constructions introduced in the previous subsection into a unique algebraic framework and study when they occur in greater generality. Both of these constructions are a decomposition of a collection of Hilbert subspaces of a given Hilbert space $H$, indexed on a partially ordered set. In greater generality these collections are functors from a poset to the category of Hilbert spaces with isometries as morphisms. Not all these functors admit a decomposition, the aim of this document is to prove that the ones that do are exactly those that satisfy an intersection property.

\pagebreak

%\pagebreak

\subsection{Main results of this document}

\subsubsection{Prerequisites}

In order to state the main results of this document we need firstly to recall some concepts.

\begin{defn}[Categories]

We shall consider several categories; the category that has as objects Hilbert spaces and as morphism continuous linear applications will be noted $\cat{Hilb}$, when the morphisms are isometries we will call it $\cat{IHilb}$; when the objects are pre-Hilbert spaces and the morphisms are continuous linear applications the corresponding category is $\cat{PHilb}$; finally the category that has as object pre-Hilbert spaces and as morphisms isometries shall be denoted as $\cat{IPhilb}$ and the category that has as objects vector spaces and as morphisms linear applications shall be noted as $\cat{Vect}$. The reference to the field $\mathbb{K}= \R$ or $\mathbb{C}$ is made implicit.
\end{defn}
%\vspace{0.5cm}

\begin{repprop}{a-h-c:completion}[Completion]
Let $H$ be a pre-Hilbert space, there is a Hilbert space that we shall note as $\com H$ and a continuous injective linear application $\eta_H:H\to \com H$ such that for any Hilbert space $H_1$ and any continuous linear application (of $\cat{PHilb}$), $\phi: H\to H_1$, there is a unique continuous linear application $\com \phi:\com H\to H_1$ such that $\com \phi \eta_H= \phi$. $\com H$ is called the completion of $H$.

\end{repprop}
%\vspace{0.5cm}

\begin{nota}
Let $\A$ be a poset, and let $\B\subseteq \A$ be a subposet of $\A$. We will denote,

\begin{equation}
\hat{\B}= \{a\in \A \quad \exists b\in \B, a\leq b \}
\end{equation}

the lower-set associated to $\B$ and any subposet of $\A$ such that $\hat{\B}= \B$ will be call a lower-set. The set of lower-sets of $\A$ will be denotes $\U(\A)$.

\end{nota}

\begin{defn}[Poset extension]
Let $\A_2$ be the subposet of $\A\times \U(\A)$ constituted of couples $(\alpha,\B)$ such that $\B\subseteq \hat{\alpha}$.
\end{defn}
%\vspace{0.5cm}

\begin{repcor}{a-h-c:join-lattice-isometry}[$\Sub(H)$ as join semi-lattice]
For the category $\ih$, and for a given Hilbert space $H$, the "poset" $\Sub(H)$ of subobjects of $H$ has a join, $\bigvee_{i\in I}H_i$, for any collection, $(H_i,i\in I)$, of objects of $\Sub(H)$.
\end{repcor}
%\vspace{0.5cm}

\begin{defn}%\label{chapitre-2-well-founded_def}

If every strictly decreasing sequence of elements of a poset $\A$ terminates, the poset is said to be well-founded.

\end{defn}

\subsubsection{Definition of decomposability}

\begin{defn}[Core of a category]
Let $\cat{C}$ be any category the subcategory of $\cat{C}$ that has the same objects than $\cat{C}$ but which morphisms are the isomorphisms of $\cat{C}$ is the core of $\cat{C}$ and denoted as $\core\cat{C}$. 

\end{defn}

\begin{defn*}[Decomposability for funtors in $\ih$]
Let $\A$ be any poset, a functor $G:\A\to\cat{IHilb}$ is decomposable if there is a collection of functors $(S_a:\A\to\core\ih,a\in \A)$ such that, 
\begin{equation}
G\cong \com \underset{a\in \A}{\bigoplus}S_a1[a\leq.]
\end{equation}

i.e. there is a natural transformation, $(\phi_a,a\in \A)$, where for any $a\in \A$, $\phi_a$ is an isometric isomorphisms from $G(a)$ to $\com \bigoplus_{b\leq a}S_b(a)$.
\end{defn*}
%\vspace{0.5cm}

\subsubsection{Definition of the intersection property}
\begin{defn}[Meet semi-lattice]
Let $\A$ be a poset, $\A$ is a meet semi-lattice when for any  $a,b\in\A$, there is $d$ such that,

$$\forall c\in \A,\quad  c\leq a \quad \& \quad c\leq b \implies c\leq d$$

$d$ is unique and we shall note it $a\wedge b$.
\end{defn}

\begin{defn*}[Intersection property]
Let $\A$ be any poset and $G$ be a functor from $\A$ to $\ih$. For any $(\alpha,\B) \in \A_2$ let us note $\pi^{\alpha}(\B)$ the orthogonal projection of $G(\alpha)$ onto $G^{\B}_\alpha= \bigvee_{a\in \B}G^a_\alpha$, i.e.,
\begin{equation}
\pi^{\alpha}(\B)=\bigvee_{a\in \B}G^a_\alpha \left(\bigvee_{a\in \B}G^a_\alpha\right)^\dagger
\end{equation}

We shall say that $G$ satisfies the intersection property if

\begin{equation}\tag{I'}
\forall (\alpha,a),(\alpha,b)\in \A_1,\quad \pi^{\alpha}(\hat{a}\cap \hat{b})=\pi^{\alpha}(\hat{a})\pi^{\alpha}(\hat{b})
\end{equation} 

\end{defn*}

When $\A$ is a meet semi-lattice, the intersection property can be restated as,

\begin{equation}
\pi^{\alpha}(\widehat{a\wedge b})=\pi^{\alpha}(\hat{a})\pi^{\alpha}(\hat{b})
\end{equation}

\subsubsection{Theorem}

The main result of this document is the equivalence between the intersection property and the interaction decomposition.

\begin{repthm}{d-h-2:thm}[Main theorem]%\label{i:main-theorem}
Let $\A$ be a well founded poset and $G$ be a functor from $\A$ to $\ih$, $G$ is decomposable if and only if it statifies the intersection property.

\end{repthm}

In order to prove this Theorem we must first prove it in the particular case of an increasing collection of Hilbert subspace of a given Hilbert space. In this context the definition of interaction decomposition is simpler. \\

Let $H$ be a Hilbert space and $(K_i,i\in I)$ be an orthogonal collection of Hilbert subspaces of $H$ over any set $I$; we shall note the closure of the sum of such collection of subspaces as,

\begin{equation}
\overline{\sum_{i\in I}K_i}= \underset{i\in I}{\pig} K_i
\end{equation}

\begin{defn*}[Decomposable collection of Hilbert subspaces]
Let $\A$ be any poset, let $H$ be a Hilbert space and let $(H_a \subseteq H,a\in \A)$ be a collection of Hilbert subspaces of $H$. $(H_a,a\in\A)$ is said to be decomposable if there is a collection of Hilbert subspaces of $H$, $(S_a\subseteq H,a\in \A^{\Plus})$, such that for any $a\in \A$,

\begin{equation}
H=\underset{a\in \A^{\Plus}}{\pig} S_a
\end{equation}

and for any $a\in \A$,

\begin{equation}
H_a=\underset{b\leq a}{\pig} S_b
\end{equation}

We shall call $(S_a,a\in \A^{\Plus})$ a decomposition of $(H_a,a\in \A)$.

\end{defn*}
%\vspace{0.5cm}

The simpler version of the Theorem \ref{d-h-2:thm} is as follows.
\begin{repthm}{d-h:thm}
Let $\A$ be a well-founded poset, let $H$ be a Hilbert space, let $(H_a\in \vect{H}, a\in \A)$ be a collection of subspaces of $H$; $(H_a\in \vect{H}, a\in \A)$ is decomposable if and only if it satisfies the intersection property.
\end{repthm}
%\vspace{0.5cm}

Which can be stated, when $\A$ is a meet semi-lattice, as follows.

\begin{repcor}{chapitre-4-decomposition-hilbert-cor-simple}
Let $\A$ be a well-founded meet semilattice. Let $H$ be a Hilbert space and $(H_a\in \vect{H},a\in \A)$ be a collection of subspace of $H$ such that for any $a,b\in \A$,

\begin{equation}
\pi_a\pi_b=\pi_{a\wedge b}
\end{equation}

where $\pi_a$ is the orthogonal projection of $H$ onto $H_a$ for any $a\in \A$. Then, $(H_a,a\in \A)$ is decomposable.
\end{repcor}

Furthermore any decomposable functor is in fact isometrically isomorphic to an increasing collection of Hilbert subspaces of a given Hilbert space.

\begin{repprop}{d-h-2:thm-lem}
Let $\A$ be any poset and $G$ be a functor from $\A$ to $\ih$. If $G$ is decomposable then there is $H$ a Hilbert space and a functor $\U:\A\to \vect H$ such that $G\cong \U$.

\end{repprop}
\subsection{Structure of this document}

We start by recalling what the chaos decomposition (Section \ref{chapitre-4-chaos-decomposition}). This motivates our study of decomposable collections of Hilbert subspaces (Section \ref{chapitre-4-decomposition-collection-subspaces}). In Section \ref{d-h:Characterizing the decompositions} we then remark that there is a unique decomposition for decomposable collections of subspaces, then we define the intersection property in this context (Section \ref{d-h:Intersection property}) and prove the equivalence theorem for collections of Hilbert subspaces (Section \ref{d-h:Theorem: equivalence between decomposition and intersection property}).\\

In the second part, Section \ref{chapitre-4-extension}, of this document we explain how decomposability for collections of Hilbert subspaces translates in the context of functors to $\ih$ (Section \ref{d-h-2:Decomposability}). In Section \ref{d-h-2:No easy road} we discuss why the result of the first sections are not enough to prove the general equivalence between theorem and the rest of Section \ref{chapitre-4-extension} is dedicated to the proof of the equivalence the intersection property and decomposability for functors from a well-founded poset to $\ih$.\\

Finally in Section \ref{bootstrap} we explain how the interaction decomposition for presheaves \cite{GS3} and the one we propose here are related.

\section{Interaction decomposition and Chaos decomposition}\label{chapitre-4-chaos-decomposition}

Let us first present some facts for the decomposition into interactions subspaces for factor spaces.

\begin{defn}
Let $H$ be a Hilbert space, we shall call the Grassmannian of $H$ in $\h$, noted $\vect_{\h}{H}$ or simply $\vect{H}$, the set of all Hilbert subspaces of $H$, i.e. closed vector subspaces of $H$. Respectively $\vect_{\cat{Vect}}{H}$ for the vector subspaces of $H$.
\end{defn}
%\vspace{0.5cm}

\begin{nota}
We will denote the set of increasing functions from a poset $\A$ to $\vect{H}$ as $[\A, \vect{H}]$.
\end{nota}

\begin{defn}
Let $H$ be a Hilbert space, let $\A$ be any poset, let $(W_a\in \vect{H} ,a\in \A)$ be a collection of subspaces of $H$; if for any $a,b\in \A$ such that $a\neq b$, $W_a,W_b$ are othogonal, then we shall note the closure of $\sum_{a\in \A}W_a$ as $\pig_{a\in \A}W_a$. 
\end{defn}
%\vspace{0.5cm}

One can define the sum of a collection of pre-Hilbert space but it is not universal (see Appendix B Definition \ref{a-h-c:direct-sum}, Proposition \ref{a-h-c:no-colimit}). Let us recall that $\com: \ph\to \h$ is the completion functor that is left adjoint to the forgetful functor $U$ (Appendix B Proposition \ref{a-h-c:completion}, \ref{a-h-c:adjoint})\\

Furthermore for the category $\ih$ and for a given Hilbert space $H$, the "poset" $\Sub(H)$ of subobject of $H$ has a join for any set $I$ of objects of $\Sub(H)$ (Appendix B Corollary \ref{a-h-c:join-lattice-isometry}). Given a collection $(\phi_i,i\in I)$ of monomorphism that have as codomain $H$, $\bigvee_{i\in I}\phi_i$ can be represented as $\overline{\sum_{i\in I}\im \phi_i}\hookrightarrow  H$, where $\im\phi_i$ is the image of $\phi_i$ in $\ve$ but is also a Hilbert space as $\phi_i$ is an isometry for any $i\in I$. In particular for collections of Hilbert subspace $(H_i\subseteq H,i\in I)$, a representant of $\bigvee_{i\in I}H_i$ is $\overline{\sum_{i\in I}H_i}$, where $H_i$ is implicitly identified to its inclusion in $H$.

\begin{rem}
Let $\A$ be any poset and let $(W_a\in \vect{H},a\in \A)$ be a collection of subspaces of a Hilbert space $H$ such that $\overline{\sum_{a\in \A}W_a}=\pig_{a\in \A}W_a$ then 
\begin{equation}
\com \bigoplus_{a\in \A} W_a\cong \pig_{a\in \A}W_a=\bigvee_{a\in \A}W_a 
\end{equation}
\end{rem}

\begin{thm}[Decomposition into interaction subspaces]\label{interaction-decomposition-litterature}
Let $I$ be a finite set and $(H_a,a\subseteq I)$ be the collection of factor spaces; for any $a\subseteq I$, let $S_a=H_a\cap \left(\underset{b\subsetneq a}{\sum}H_b\right)^{\perp}$, then

\begin{equation}
H_a=\underset{b\leq a}{\pig} S_b
\end{equation} 

\end{thm}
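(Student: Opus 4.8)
The plan is to argue by induction along the well-founded order $\subseteq$ on $\Pa(I)$ (equivalently by induction on $|a|$), after first recording the one geometric fact about factor spaces that does the work. Since $I$ is finite, every space in sight is finite-dimensional; hence all sums are automatically closed and $\underset{b\leq a}{\pig}$ reduces to an ordinary orthogonal direct sum. There are therefore no analytic subtleties, and it suffices to reason algebraically.

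First I would recall the description of the orthogonal projection $\pi_a\colon H\to H_a$ for the canonical inner product: because $H_a$ is the space of $E_a$-cylindric functions, $\pi_a f$ is obtained by averaging $f$ uniformly over the coordinates outside $a$. From this description the key identity is immediate, namely that the projections commute and compose according to
\begin{equation}
\pi_a \pi_b = \pi_{a \cap b},
\end{equation}
and in particular $\pi_b(H_c)\subseteq H_{b\cap c}$. This is precisely the intersection property for the factor spaces, i.e.\ the instance of the hypothesis $\pi_a\pi_b=\pi_{a\wedge b}$ of Corollary \ref{chapitre-4-decomposition-hilbert-cor-simple} applied to the meet-semilattice $\Pa(I)$ (where $a\wedge b=a\cap b$).

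Next I would establish that $(S_b,\, b\subseteq I)$ is pairwise orthogonal. Fix $b\neq c$ and take $x\in S_b$, $y\in S_c$; then at least one of $b\cap c\subsetneq b$ or $b\cap c\subsetneq c$ holds, say the latter. Using self-adjointness of $\pi_c$ together with $y\in H_c$ gives $\langle x,y\rangle=\langle \pi_c x,y\rangle$, while $\pi_c x=\pi_c\pi_b x=\pi_{b\cap c}x\in H_{b\cap c}\subseteq \sum_{d\subsetneq c}H_d$ by the commutation identity. Since $y\in S_c$ is orthogonal to $\sum_{d\subsetneq c}H_d$, this forces $\langle x,y\rangle=0$; the remaining case is symmetric. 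Hence $S_b\perp S_c$, so every sum of distinct $S_b$'s is an orthogonal one and $\underset{b\leq a}{\pig}S_b$ is well defined.

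The main step is the equality $H_a=\underset{b\leq a}{\pig}S_b$, by induction on $a$ (base case $a=\emptyset$, where $S_\emptyset=H_\emptyset$). The inclusion $\supseteq$ is clear since $S_b\subseteq H_b\subseteq H_a$ whenever $b\subseteq a$. For the converse, observe that by its very definition $S_a$ is the orthogonal complement of the subspace $\sum_{b\subsetneq a}H_b$ inside $H_a$, so $H_a=\big(\sum_{b\subsetneq a}H_b\big)\oplus S_a$. The inductive hypothesis gives $H_b=\bigoplus_{c\subseteq b}S_c$ for every $b\subsetneq a$, and reindexing shows that the set of indices $c$ occurring in $\sum_{b\subsetneq a}\sum_{c\subseteq b}S_c$ is exactly $\{c : c\subsetneq a\}$, whence $\sum_{b\subsetneq a}H_b=\bigoplus_{c\subsetneq a}S_c$. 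Combining, $H_a=\big(\bigoplus_{c\subsetneq a}S_c\big)\oplus S_a=\bigoplus_{c\subseteq a}S_c$, which closes the induction. I expect the genuine content to be the commutation identity $\pi_a\pi_b=\pi_{a\cap b}$, where the product structure of $E_I$ really enters through the averaging interpretation of $\pi_a$; once that is in hand, orthogonality and the inductive reindexing are routine bookkeeping.
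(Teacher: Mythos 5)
Your proof is correct and takes essentially the same route as the paper: the paper disposes of this statement by citing Speed's inductive argument, and its own general Theorem \ref{d-h:thm} (specialized through Corollary \ref{chapitre-4-decomposition-hilbert-cor-simple}) runs exactly your scheme --- verify the intersection property $\pi_a\pi_b=\pi_{a\wedge b}$, deduce pairwise orthogonality of the $S_a$ from it, then induct along the well-founded order, splitting $H_a$ as $S_a$ plus the span of the smaller $H_b$ and reindexing. The only material you add is the explicit check, via the averaging description of $\pi_a$, that factor spaces do satisfy the intersection property, a point the paper leaves to the cited literature.
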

%\vspace{0.5cm}

In Speed's review on the decomposition into interaction subspaces\cite{Speed} the proof of this result is done by induction and in Lauritzen's reference book on Graphical models (Appendix B \cite{Lauritzen}) the explicit expression of the projections on the interaction spaces is given, which is for any $a\subseteq I$,

\begin{equation}\label{chapter-3-interactions}
s_a=\underset{b\leq a}{\sum}(-1)^{|a\setminus b|}\pi_b
\end{equation}

We would like in this document to extend the interaction decomposition to Hilbert spaces. At first sight one could think that it is a particular case of an interaction decomposition compatible with a collection of projectors; one could argue that here Equation \ref{chapter-3-interactions} is simply the canonical decomposition with respect to the collection of projectors $(\pi_a,a\in \A)$. However it is not the case as we shall discuss in Section 5; the two points of view differ and as a consequence the results we shall present in Section 3 requiere much weaker constraints on the poset $\A$ than the one needed for collections of projectors.\\

There is an other example of such decomposition for a $\N$-filtered collection of subspaces of the space of all random variables which is the chaos decomposition. We will follow the presentation given in Sinai's \emph{Theory of Phase Transition: rigorous results} \cite{Sinai}. This construction makes it possible to characterize the space of potentials. Let us recall how one can define potentials for a non finite but countable $I$, and with $E_i=\R$ for $i\in I$.

\begin{nota}
For $m\in \N$, we note $[m]$ for $[1,...,m]$.
\end{nota}

Let $I=\Z^d$, $E=\R^I$; for any $\phi\in E$ and $x:[m]\to I$, let $\phi(x)=\underset{i\in [m]}{\prod}\phi(x_i)$. Let us note the set of maps of $I^{[m]}$ as $\A_m$.\\

Let us note $\Phi(x)$ the random variable that sends $\phi\in \R^I$ to $\phi(x)$.\\

\begin{nota}
By convention $[0]=\emptyset$ and $\underset{i\in \emptyset}{\prod}\phi(x_i)=1$; therefore $\A_0$ has one element.
\end{nota}

Let us assume that $P$ is a Gaussian distribution on $E$, for any $x\in \A$, $\Phi(x)\in L^2(E,P)$; for a collection $(v_j\in L^2(E,P) ,j\in J)$, we shall note $\langle v_j,j\in J\rangle$ the Hilbert subspace of $L^2(E,P)$ generated by these elements, i.e. the smallest Hilbert subspace of $ L^2(E,P)$ that contains them, its explicit expression is $\overline{\underset{j\in J}{\sum}\mathbb{K}v_j}$ when $L^2(E,P)$ is a $\mathbb{K}$-Hilbert space.\\

 For any $m\in \N$, let $H(m)=\langle\Psi(x),x\in \underset{k\leq m}{\cup}\A_m\rangle$ and $H=L^2(E,P)$.

\begin{lem} \label{c:density}

\begin{equation}
H= \overline{\underset{m\in \N}{\sum}H_m}
\end{equation}
\end{lem}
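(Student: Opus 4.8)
The plan is to prove the two inclusions; only $H\subseteq \overline{\sum_{m\in\N} H_m}$ requires work, since each $H_m\subseteq H$ is immediate. By the projection theorem it is equivalent to show that the only element of $H=L^2(E,P)$ orthogonal to every monomial $\Psi(x)$, $x\in\A$, is $0$. I will reduce the infinite-dimensional statement to a finite-dimensional one and then invoke the analyticity of the Gaussian Laplace transform.

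First I would reduce to cylinder functions. Here $P$ is a Gaussian measure on $E=\R^{I}$ with $I=\Z^d$ countable; for a finite $F\subseteq I$ let $\mathcal{B}_F$ be the $\sigma$-algebra generated by the coordinate variables $(\Phi(i),i\in F)$. Since $I$ is countable, the $\mathcal{B}_F$ form a filtration whose union generates the full Borel $\sigma$-algebra, so for any $f\in H$ the conditional expectations $\mathbb{E}[f\mid \mathcal{B}_F]$ converge to $f$ in $L^2$ as $F\uparrow I$ (martingale convergence, equivalently density of cylinder functions). Thus it suffices to approximate each $\mathcal{B}_F$-measurable $f\in L^2$ by polynomials in $(\Phi(i),i\in F)$, that is, by elements of $\sum_{m}H_m$.

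Next I would settle the finite-dimensional case. For finite $F$ with $|F|=n$, the law of $(\Phi(i),i\in F)$ is a (possibly degenerate) Gaussian $\gamma$ on $\R^n$, and the claim becomes density of polynomials in $L^2(\R^n,\gamma)$. I would argue by orthogonality: if $g\in L^2(\R^n,\gamma)$ is orthogonal to all polynomials, then, because a Gaussian has all exponential moments, the function $z\mapsto \int g(u)\,e^{\langle z,u\rangle}\,d\gamma(u)$ is entire on $\mathbb{C}^n$; its Taylor coefficients at $0$ are, up to factorials, the moments $\int g(u)\,u^{\alpha}\,d\gamma(u)=0$, so it vanishes identically. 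Restricting to imaginary arguments shows that the Fourier transform of the finite signed measure $g\,\gamma$ is zero, whence $g=0$ $\gamma$-almost everywhere. Pulling this back along the coordinate map, every $\mathcal{B}_F$-measurable square-integrable function is an $L^2$-limit of polynomials in $(\Phi(i),i\in F)$, and such polynomials are finite linear combinations of monomials $\Psi(x)$ with all entries of $x$ in $F$, hence lie in $\sum_{m}H_m$.

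Combining the two steps yields the density of $\sum_{m}H_m$ in $H$, which is the assertion. The main obstacle is the finite-dimensional density of polynomials: everything hinges on the exponential integrability of the Gaussian measure, which makes the Laplace transform entire and lets the vanishing of all moments propagate to the vanishing of the Fourier transform. The reduction to cylinder functions is comparatively routine, relying only on $I$ being countable so that the coordinate $\sigma$-algebras exhaust the Borel structure of $E$.
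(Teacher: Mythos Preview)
Your argument is correct and self-contained. The paper does not actually prove this lemma: its entire proof reads ``Theorem 2 \cite{T.Levy}'', i.e.\ it defers to an external reference. What you wrote is essentially the classical proof one would expect to find behind that citation---reduce to cylinder functions by martingale convergence, then use exponential integrability of finite-dimensional Gaussians to show that vanishing of all polynomial moments forces vanishing of the Fourier transform of $g\,\gamma$. So there is nothing to compare on the level of strategy; you have simply supplied the argument the paper omits.
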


\begin{proof}
Theorem 2 \cite{T.Levy}

\end{proof}
%\vspace{0.5cm}

\begin{prop}\label{context:simple-decomposition}
For $m\in \N$, let 
\begin{equation}
S_m=H(m)\cap H(m-1)^\perp
\end{equation}
 
then,
\begin{equation}
H(m)=\pig_{k\leq m}S_m
\end{equation} 

Furthermore $ L^2(E,P)=\pig_{m\in \N}S_m$

\end{prop}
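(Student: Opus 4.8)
The plan is to exploit the nested structure $H(0)\subseteq H(1)\subseteq\cdots$ of closed subspaces and to reduce everything to the elementary fact that a closed subspace of a Hilbert space admits an orthogonal complement, together with the density statement of Lemma \ref{c:density}. Adopting the convention $H(-1)=\{0\}$ (so that $S_0=H(0)$), the first thing I would record is that the family $(S_m,m\in\N)$ is pairwise orthogonal: for $k<m$ one has $S_k\subseteq H(k)\subseteq H(m-1)$ whereas $S_m=H(m)\cap H(m-1)^\perp\subseteq H(m-1)^\perp$, so $S_k\perp S_m$. This is what legitimizes writing $\pig$ for the closed sums appearing in the statement.

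Next I would establish the core identity. Since $H(m-1)$ is a closed subspace of the Hilbert space $H(m)$, its orthogonal complement \emph{inside} $H(m)$ is exactly $H(m)\cap H(m-1)^\perp=S_m$, whence $H(m)=H(m-1)\oplus S_m$, a finite orthogonal direct sum. A straightforward induction on $m$, with base case $H(0)=S_0$, then yields $H(m)=\bigoplus_{k\leq m}S_k=\pig_{k\leq m}S_k$, the closure being superfluous for a finite orthogonal family.

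For the global statement I would pass to the infinite sum via Lemma \ref{c:density}. Because $(H(m))$ is increasing, the algebraic sum $\sum_{m\in\N}H(m)$ equals $\bigcup_m H(m)$, and using the finite decomposition this sum coincides with $\sum_{k\in\N}S_k$: each $H(m)=\sum_{k\leq m}S_k$ is contained in $\sum_k S_k$, and conversely any finite sum $\sum_{k\in F}s_k$ with $s_k\in S_k$ lies in $H(\max F)$. Taking closures and invoking $H=\overline{\sum_m H(m)}$ gives $L^2(E,P)=\overline{\sum_k S_k}=\pig_{k\in\N}S_k$.

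The argument has no serious obstacle; the only points requiring mild care are verifying that a finite orthogonal sum of closed subspaces is again closed (so that no completion is needed at the finite stage, using Pythagoras to show the component sequences are Cauchy), and checking that the algebraic identity $\sum_m H(m)=\sum_k S_k$ holds \emph{before} taking closures, so that the density input of Lemma \ref{c:density} transfers to the $S_k$. Alternatively, one could observe that $\N$ is a well-founded meet-semilattice and that the orthogonal projections $\pi_m$ onto $H(m)$ satisfy $\pi_m\pi_{m'}=\pi_{\min(m,m')}$, making the proposition an instance of Corollary \ref{chapitre-4-decomposition-hilbert-cor-simple}; but the direct induction above is more transparent in this filtered setting and keeps the section self-contained.
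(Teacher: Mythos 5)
Your proof is correct and follows essentially the same route as the paper's: an induction on $m$ using the identity $H(m)=H(m-1)\oplus^{\perp}S_m$ (the orthogonal complement of $H(m-1)$ taken inside the Hilbert space $H(m)$), followed by passing to the closure of the increasing union via Lemma \ref{c:density}. You simply make explicit several points the paper leaves implicit (pairwise orthogonality of the $S_k$, closedness of finite orthogonal sums, and the algebraic identity $\sum_m H(m)=\sum_k S_k$ before taking closures), which is fine.
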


\begin{proof}

Let us show the first statement by induction.\\

$H(\emptyset)=\mathbb{K}=S(\emptyset)$. Let us assume that for some $m\geq 0$, $H(m)=\pig_{0\leq k\leq m}S_k$; one has that $H(m+1)=S_{m+1}\bigoplus^{\perp} H(m)$ therefore,

\begin{equation}
H(m+1)=\bigoplus_{0\leq k \leq m+1}^\perp S_k
\end{equation}

which ends the proof by induction.\\

Therefore $\sum_{m\in \N}H_m= \bigoplus_{m\in \N}S_n$ and as $H= \overline{\sum_{m\in \N}H_m}$ (Lemma \ref{c:density}) therefore $H=\pig_{m\in \N}S_n$.

\end{proof}
%\vspace{0.5cm}

\begin{defn}[Definition 4.7 \cite{Sinai}]\label{context:Hermite-Ito polynomials}

Let $P$ be a Gaussian distribution on $E$, let $m\in \N$, let $x\in \A_m$; $\Psi(x)$ has a unique orthogonal decomposition, in other words there is a unique couple $(u_1,u_2)$ such that $\Psi(x)=u_1+u_2$ with $u_1\in H(m-1)^\perp$, and $u_2\in H(m-1)$ with $\langle u_1,u_2\rangle=0$.

 We shall note $u_1$ as $:\Psi(x):$ and call it the Hermite-Ito polynomial of $\Psi(x)$.\\
\end{defn}
%\vspace{0.5cm}

\begin{prop}
For any $m\in \N$ and $x\in \A_m$, then $S_m(\Psi(x))=:\Psi(x): $  , its Hermite-Ito polynomial. 

\end{prop}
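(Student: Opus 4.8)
The plan is to show that, after unwinding the definitions, the two sides of the identity are the very same orthogonal decomposition of $\Psi(x)$; throughout, $S_m$ denotes the orthogonal projection onto the subspace $S_m=H(m)\cap H(m-1)^\perp$ of Proposition \ref{context:simple-decomposition}. First I would record the two structural facts needed. Since $x\in\A_m$, the monomial $\Psi(x)$ has degree $m$, so $\Psi(x)\in H(m)$. And from the proof of Proposition \ref{context:simple-decomposition} one has the orthogonal splitting $H(m)=S_m\oplus^{\perp}H(m-1)$; in particular $S_m\subseteq H(m-1)^\perp$, equivalently $H(m-1)\subseteq S_m^{\perp}$.

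Next I would bring in the Hermite-Ito decomposition of Definition \ref{context:Hermite-Ito polynomials}, which writes $\Psi(x)=u_1+u_2$ with $u_1\in H(m-1)^\perp$, $u_2\in H(m-1)$, $\langle u_1,u_2\rangle=0$, and sets $:\Psi(x):\,=u_1$. The one genuine step is to upgrade the membership $u_1\in H(m-1)^\perp$ to $u_1\in S_m$. This is short: $u_1=\Psi(x)-u_2$ is a difference of two elements of $H(m)$ (as $\Psi(x)\in H(m)$ and $u_2\in H(m-1)\subseteq H(m)$), hence $u_1\in H(m)$; together with $u_1\in H(m-1)^\perp$ this gives $u_1\in H(m)\cap H(m-1)^\perp=S_m$.

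To finish I would invoke uniqueness of the orthogonal projection. We now have $\Psi(x)=u_1+u_2$ with $u_1\in S_m$ and $u_2\in H(m-1)\subseteq S_m^{\perp}$, so this is precisely the splitting of $\Psi(x)$ into its $S_m$ and $S_m^{\perp}$ parts; therefore $S_m(\Psi(x))=u_1=\,:\Psi(x):$. The only thing to keep an eye on is that the projection onto $S_m$ formed inside the full space $H$ agrees with the one read off from the decomposition $H(m)=S_m\oplus^{\perp}H(m-1)$ inside $H(m)$; this is automatic because $\Psi(x)\in H(m)$ and $S_m$ is a closed subspace of $H$, so I do not expect any real obstacle. (In the edge case $m=0$ one reads $H(-1)=\{0\}$, whence $\Psi(x)=1$ and both sides equal $1$.)
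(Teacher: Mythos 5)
Your proof is correct and takes essentially the same route as the paper: both arguments rest on the splitting $H(m)=S_m\oplus^{\perp}H(m-1)$ and uniqueness of orthogonal decompositions, the only difference being direction of travel. The paper writes $\Psi(x)=S_m(\Psi(x))+\sum_{k<m}S_k(\Psi(x))$ and matches this against the uniqueness clause in the Hermite--Ito definition, whereas you start from the Hermite--Ito decomposition $\Psi(x)=u_1+u_2$, upgrade $u_1\in H(m-1)^\perp$ to $u_1\in S_m$, and identify $u_1$ as the $S_m$-projection.
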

\begin{proof}
By construction,

\begin{equation}
\Psi(x)= S_m(\Psi(x)) +\sum_{k <m} S_k(\Psi(x))
\end{equation} 

and therefore, $S_m(\Psi(x))=:\Psi(x): $

\end{proof}
%\vspace{0.5cm}

Instead of introducing directly the main equivalence theorem of this document for the most general setting of functors in $\ih$ we decided, rather to start by restricting our attention to collections of Hilbert subspaces which will serve as a motivation and justification for the general setting.

\section{Necessary and sufficient condition for the interaction decomposition to hold for a collection of Hilbert subspaces}\label{chapitre-4-decomposition-collection-subspaces}

\subsection{Decomposability for collections of Hilbert subspaces}\label{d-h:Decomposability}

As stated in the last section, a common way to give an interaction decomposition for factor subspaces is to use the canonical scalar product on $\R^E$, where $E$ is a finite set. We shall state this result in a more general context by giving a necessary and sufficient condition for a collection of Hilbert subspaces over a well founded poset to be decomposable.

Let us start by defining what an interaction decomposition for a collection of Hilbert subspaces of a given Hilbert space is.\\

\begin{defn}[Decomposable collection of Hilbert subspaces]\label{d-h:decomposable}
Let $\A$ be any poset, let $H$ be a Hilbert space and let $(H_a,a\in \A)\in [\A,\vect H]$ be an increasing collection of subspaces of $H$. $(H_a,a\in\A)$ is said to be decomposable if there is a collection of subspaces of $H$, $(S_a\in \vect{H},a\in \A^{\Plus})$, such that for any $a\in \A$,

\begin{equation}
H=\underset{a\in \A^{\Plus}}{\pig} S_a
\end{equation}

and such that for any $a\in \A$,

\begin{equation}
H_a=\underset{b\leq a}{\pig} S_b
\end{equation}

We shall call $(S_a,a\in \A^{\Plus})$ a decomposition of $(H_a,a\in \A)$; by convention $H_1=H$.

\end{defn}
%\vspace{0.5cm}

\subsection{Characterizing the decompositions}\label{d-h:Characterizing the decompositions}

\begin{prop}\label{d-h:interaction-subspaces}%\label{decomposition:interaction-subspaces}
Let $H$ be a Hilbert space and let $(H_a\in \vect{H},a\in \A)$ be a decomposable collection of Hilbert subspaces of $H$; let $(S_a,a\in \A^{\Plus})$ be a decomposition of $(H_a,a\in \A)$, then for any $a\in \A^{\Plus}$

\begin{equation}
S_a=H_a\cap \bigcap_{b\lneqq a}H_b^\perp
\end{equation}
\end{prop}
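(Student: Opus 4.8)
The plan is to work entirely with the orthogonal projections attached to the decomposition and to read off each inclusion from the pairwise orthogonality of the $(S_a)$. Write $s_a$ for the orthogonal projection of $H$ onto $S_a$, for $a\in\A^{\Plus}$. Since the family $(S_a,a\in\A^{\Plus})$ is pairwise orthogonal and $H=\pig_{a\in\A^{\Plus}}S_a$, it is a Hilbert direct sum: every $x\in H$ admits the norm-convergent decomposition $x=\sum_{a\in\A^{\Plus}}s_a(x)$, and for any $B\subseteq\A^{\Plus}$ the orthogonal projection onto $\pig_{b\in B}S_b$ is $\sum_{b\in B}s_b$. In particular $H_a=\pig_{b\le a}S_b$ is the range of $\sum_{b\le a}s_b$, so for $x\in H$ one has $x\in H_a$ exactly when $s_b(x)=0$ for every $b\not\le a$. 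I also record that $S_b\subseteq H_b$ for every $b\in\A$, obtained by taking the index $b\le b$ inside $H_b=\pig_{c\le b}S_c$.

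For the inclusion $S_a\subseteq H_a\cap\bigcap_{b\lneqq a}H_b^{\perp}$, the containment $S_a\subseteq H_a$ is immediate from $a\le a$. For $b\lneqq a$, every $c\le b$ satisfies $c<a$, hence $c\neq a$ and therefore $S_a\perp S_c$ by the orthogonality of the family; consequently $S_a$ is orthogonal to $H_b=\pig_{c\le b}S_c$, i.e. $S_a\subseteq H_b^{\perp}$. This yields one inclusion.

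For the reverse inclusion, take $x\in H_a\cap\bigcap_{b\lneqq a}H_b^{\perp}$ and decompose $x=\sum_{c\in\A^{\Plus}}s_c(x)$. Since $x\in H_a$, the components with $c\not\le a$ vanish, so the sum runs over the indices $c\le a$. For each $c<a$ we have $c\lneqq a$, hence $x\in H_c^{\perp}$; combined with $S_c\subseteq H_c$ this forces $x\perp S_c$, that is $s_c(x)=0$. The only possibly nonzero term is the one with $c=a$, so $x=s_a(x)\in S_a$. Putting the two inclusions together gives the claimed equality.

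The whole argument is uniform in $a\in\A^{\Plus}$, including the top element $1$, where the convention $H_1=H$ makes $\bigcap_{b\lneqq 1}H_b^{\perp}=\bigcap_{b\in\A}H_b^{\perp}$ and the identity reads $S_1=H\cap\bigcap_{b\in\A}H_b^{\perp}$. I do not expect a genuine obstacle here: the proof is elementary once the Hilbert direct-sum facts are in place, and the only point requiring care is the bookkeeping that isolates the single surviving component $s_a(x)$. This rests on two complementary observations, namely that $x\in H_a$ annihilates every component outside $\{c\le a\}$, while the hypothesis $x\in H_b^{\perp}$ together with $S_b\subseteq H_b$ annihilates every component with $c<a$.
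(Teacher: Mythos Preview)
Your proof is correct and rests on the same idea as the paper's: the pairwise orthogonality of the pieces $S_c$ forces $S_a$ to be the orthogonal complement in $H_a$ of everything strictly below. The paper packages this slightly differently, first invoking Lemma~\ref{d-h:lemma-double-sum} to identify $\overline{\sum_{b\lneq a}H_b}=\overline{\sum_{b\lneq a}S_b}$ and then reading off $S_a$ as the orthogonal complement of this closed subspace inside $H_a=\pig_{b\leq a}S_b$; your element-wise two-inclusion argument via the projections $s_c$ bypasses that lemma entirely and is arguably more direct, at the cost of being a touch longer.
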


\begin{lem}\label{d-h:lemma-double-sum}%\label{decomposition:lemma-double-sum}
Let $I$ be any set and $(V_i,i\in I)$ a collection of vector subspaces, that are not necessarily closed, of a Hilbert space $H$, then 

\begin{equation}
\overline{\underset{i\in I}{\sum}V_i}=\overline{\underset{i\in I}{\sum}\overline{V_i}}
\end{equation}

\end{lem}
\begin{proof}
By definition $\overline{\underset{i\in I}{\sum}V_i}\subseteq \overline{\underset{i\in I}{\sum}\overline{V_i}}$; for any $i\in V_i$, $V_i\subseteq \overline{\underset{i\in I}{\sum}V_i}$ and therefore $\overline{\underset{i\in I}{\sum}\overline{V_i}}\subseteq \overline{\underset{i\in I}{\sum}V_i}$.
\end{proof}
%\vspace{0.5cm}

\begin{rem}
The sum in Lemma \ref{d-h:lemma-double-sum} can be rewritten as,
\begin{equation}
\com \bigvee_{i\in I} V_i=\bigvee_{i\in I}\com V_i
\end{equation}

where $V_i$ are seen as pre-Hilbert spaces, and which is a consequence of $\com$ being left adjoint to the forgetful functor $U:\h \to \ph$ (Proposition \ref{a-h-c:adjoint}).
\end{rem}

Proof of Proposition \ref{d-h:interaction-subspaces}
\begin{proof}
let $(H_a\in \vect{H},a\in \A)$ be a decomposable collection of Hilbert subspaces of $H$.\\

Let us notice that $ \underset{b\lneq a}{\bigcap}H_b^\perp=\left(\underset{b\lneq a}{\sum}H_b\right)^\perp$. Furthermore, 
\begin{equation}
\overline{\underset{b\lneq a}{\sum}H_b}=\overline{\underset{b\lneq a}{\sum}\overline{\underset{c\leq b}{\sum} S_c}}
\end{equation}

By Lemma \ref{d-h:lemma-double-sum} $\overline{\underset{b\lneq a}{\sum}H_b}=\overline{\underset{b\lneq a}{\sum}\underset{c\leq b}{\sum} S_c}$ and one remarks that $\underset{b\lneq a}{\sum}\underset{c\leq b}{\sum} S_c= \underset{b\lneq a}{\sum}S_b$. Futhermore, as $H_a=\underset{b\leq a}{\pig}S_a$ one has that $S_a$ is orthogonal to $\underset{b\lneq a}{\sum}S_b$; therefore, $S_a=H_a\cap \left(\overline{\underset{b\lneq a}{\sum}H_b}\right)^\perp$ and so $S_a=H_a\cap \underset{b\lneq a}{\bigcap}H_b^\perp$.
\end{proof}
%\vspace{0.5cm}

\begin{rem}
A consequence of Proposition \ref{d-h:interaction-subspaces} is that if $(H_a,a\in \A)$ is decomposable then it has a unique decomposition, i.e. if for any $a\in \A^{\Plus}$, $H_a=\underset{b\leq a}{\pig}S_a=\underset{b\leq a}{\pig}{S_1}_a$ then for any $a\in \A$, $S_a={S_1}_a$.
\end{rem}

\begin{nota}
Let $(H_a\in\vect{H},a\in \A)$ be a collection of Hilbert subspaces of a Hilbert space $H$, for $a\in\A^{\Plus}$ we shall note $s_a^\perp$ the orthogonal projection onto $S_a=H_a\cap \underset{b\lneqq a}{\bigcap}H_b^\perp$
\end{nota}

\subsection{Intersection property}\label{d-h:Intersection property}

\begin{defn}
Let $H$ be a Hilbert space, let $\A$ be any poset and let $(H_a\in \vect{H},a\in \A)$ be an increasing collection of subspace of $H$; for any subposet $\B$ of $\A$ we shall denote $H(\B)$ the completion of $\sum_{b\in \B}H_b$, i.e. $\overline{\sum_{b\in \B}H_b}$ (Appendix B Proposition \ref{a-h-c:completion-closure}). We shall denote $\pi(\B)$ the othogonal projection on $H(\B)$ and if $\B=\hat{a}$ with $a\in \A$ we shall simply note it as $\pi_a$.
\end{defn}
%\vspace{0.5cm}

\begin{prop}\label{d-h:reorganise-sum}
Let $H$ be a Hilbert space, let $I$ be any set and let $(V_i,i\in I)$ be vector subspaces of $H$, i.e pre-Hilbert subspace of $H$. If $H=\pig_{i\in I}V_i$ then for any $J,J_1\subseteq I$ disjoint subsets of $I$, 

\begin{equation}
H=\pig_{j\in J} V_j\oplus^\perp \pig_{j\in J_1}V_j
\end{equation}

\end{prop}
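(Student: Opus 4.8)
The plan is to reduce the statement to the associativity of closed orthogonal sums, using Lemma~\ref{d-h:lemma-double-sum} to handle the nested closures. Throughout I read the hypothesis as requiring $J\cup J_1=I$ (with $J\cap J_1=\emptyset$), since otherwise the orthogonal sum of the two pieces only recovers $\pig_{i\in J\cup J_1}V_i$, which is a proper subspace of $H$ as soon as some $V_i$ with $i\notin J\cup J_1$ is nonzero.

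First I would record that, because $H=\pig_{i\in I}V_i$, the family $(V_i)_{i\in I}$ is pairwise orthogonal. Hence for $i\in J$ and $i'\in J_1$ we have $V_i\perp V_{i'}$, and orthogonality passes to the closures of the partial sums by continuity of the inner product, so $\pig_{j\in J}V_j\perp\pig_{j\in J_1}V_j$. This makes the orthogonal direct sum $\pig_{j\in J}V_j\oplus^\perp\pig_{j\in J_1}V_j$ well defined; recall moreover that the orthogonal sum of two closed subspaces is again closed (a Cauchy sequence $a_n+b_n$ splits, via the Pythagorean identity $\|a_n+b_n\|^2=\|a_n\|^2+\|b_n\|^2$, into Cauchy sequences in each factor), so no further completion is needed.

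Next I would identify this orthogonal sum with a single closed sum. Writing $A=\sum_{j\in J}V_j$ and $B=\sum_{j\in J_1}V_j$, we have $\pig_{j\in J}V_j=\overline{A}$ and $\pig_{j\in J_1}V_j=\overline{B}$, so the orthogonal direct sum equals $\overline{\overline{A}+\overline{B}}$. Applying Lemma~\ref{d-h:lemma-double-sum} in the form $\overline{\overline{A}+\overline{B}}=\overline{A+B}$ collapses the nested closures and yields $\overline{A+B}=\overline{\sum_{j\in J}V_j+\sum_{j\in J_1}V_j}$.

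Finally, since $J$ and $J_1$ partition $I$, the inner sum is exactly $\sum_{i\in I}V_i$, whose closure is $\pig_{i\in I}V_i=H$ by hypothesis, giving $\pig_{j\in J}V_j\oplus^\perp\pig_{j\in J_1}V_j=H$ as claimed. The only genuinely delicate point is the identification step: justifying that the closure of the sum of the two closed subspaces agrees with the closure of the sum of their generating families, which is precisely the content of Lemma~\ref{d-h:lemma-double-sum}. Everything else is bookkeeping about orthogonality and about the partition hypothesis $J\sqcup J_1=I$.
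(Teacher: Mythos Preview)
Your proof is correct and follows essentially the same route as the paper: both use Lemma~\ref{d-h:lemma-double-sum} to collapse the nested closures and a continuity argument to pass orthogonality from the $V_i$ to the closed partial sums. Your version is in fact slightly cleaner, since you establish orthogonality first and use the Pythagorean identity to justify that $\overline{A}+\overline{B}$ is closed, whereas the paper invokes the generally false statement that ``a finite sum of closed spaces is closed'' before proving orthogonality; you are also right that the statement tacitly requires $J\sqcup J_1=I$.
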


\begin{proof}

Let $V=\pig_{j\in J}V_j$, $U=\pig_{j\in J_1}V_j$, one has that $H=V+U$ by Lemma \ref{d-h:lemma-double-sum} and bacause a finite sum of closed spaces is a closed.\\

Let $v\in V$ and $u\in U$ $\langle v,u\rangle=0$. Let $v\in \overline{V}$ and $u \in\overline{U}$, then there is $v_n\in V ,n\in \N$ and $u_n \in U,n\in\N$ such that $\lim_{n\to \infty}v_n= v$ and $\lim_{n\to \infty} u_n=u$. There is $N\in\N$ such that for any $n\geq N$,

\begin{equation}
\vert \langle u,v\rangle\vert=\vert \langle u,v\rangle-\langle u_n,v_n \rangle\vert\leq 2\Vert v-v_n\Vert \Vert u-u_n\Vert
\end{equation}

Therefore $\langle u,v\rangle=0$.
\end{proof}
%\vspace{0.5cm}

\begin{defn}[Intersection property]\label{d-h:intersection-property}
Let $\A$ be any poset, let $H$ be a Hilbert space, let $(H_a\in \vect{H},a\in \A)$ be an increasing collection of subspace of $H$; $(H_a\in \vect{H},a\in \A)$ is said to verify the intersection property if,

\begin{equation}\tag{I}
\forall a,b\in \A\quad \pi(\hat{a}\cap \hat{b})=\pi_a\pi_b
\end{equation}
\end{defn}
%\vspace{0.5cm}

\subsection{Theorem: equivalence between interaction decomposition and intersection property}\label{d-h:Theorem: equivalence between decomposition and intersection property}

\begin{lem}\label{d-h:necessary-condition}
Let $\A$ be any poset, let $H$ be a Hilbert space, let $(H_a\in \vect{H},a\in \A)$ be an increasing collection of subspace of $H$; if $(H_a,a\in \A)$ is decomposable, then $(H_a,a\in \A)$ satisfies the intersection property.
\end{lem}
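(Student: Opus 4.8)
The plan is to read the intersection property directly off the orthogonal decomposition furnished by decomposability. Assume $(H_a,a\in\A)$ is decomposable, with decomposition $(S_c\in\vect H,c\in\A^{\Plus})$, so that $H=\pig_{c\in\A^{\Plus}}S_c$ and $H_a=\pig_{b\leq a}S_b$ for every $a\in\A$. For a subset $T\subseteq\A^{\Plus}$ I would write $P_T$ for the orthogonal projection of $H$ onto $\pig_{c\in T}S_c$, and $s_c$ for the projection onto $S_c$. The whole argument rests on a single identity for these projections, namely $P_TP_{T'}=P_{T\cap T'}$ for all $T,T'\subseteq\A^{\Plus}$; granting this, verifying (I) becomes a matter of identifying the three projections occurring there as suitable $P_T$.

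First I would establish the commutation identity. Because $(S_c)$ is a mutually orthogonal family with closed span $H$, every $x\in H$ admits an orthogonal expansion $x=\sum_{c\in\A^{\Plus}}s_c(x)$ convergent in norm (Bessel together with totality), and the projection onto $\pig_{c\in T}S_c$ is $P_Tx=\sum_{c\in T}s_c(x)$, since the complementary tail $\sum_{c\notin T}s_c(x)$ is orthogonal to every $S_c$ with $c\in T$. Applying $P_T$ to $P_{T'}x=\sum_{c\in T'}s_c(x)$ and using that $P_T$ fixes $S_c$ for $c\in T$ and annihilates it otherwise gives $P_TP_{T'}x=\sum_{c\in T\cap T'}s_c(x)=P_{T\cap T'}x$; in particular the $P_T$ mutually commute.

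It then remains to match the projections in (I). Since $H_a=\pig_{b\in\hat a}S_b$ we have $\pi_a=P_{\hat a}$, and likewise $\pi_b=P_{\hat b}$. For the mixed term I would show $H(\hat a\cap\hat b)=\pig_{c\in\hat a\cap\hat b}S_c$, whence $\pi(\hat a\cap\hat b)=P_{\hat a\cap\hat b}$: expanding each $H_c=\overline{\sum_{d\leq c}S_d}$ and invoking Lemma \ref{d-h:lemma-double-sum} to pull the inner closures out yields $H(\hat a\cap\hat b)=\overline{\sum_{c\in\hat a\cap\hat b}\sum_{d\leq c}S_d}$, and because the intersection of two lower sets is again a lower set the indices $d$ range exactly over $\hat a\cap\hat b$. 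Combining the three identifications with the commutation law gives $\pi_a\pi_b=P_{\hat a}P_{\hat b}=P_{\hat a\cap\hat b}=\pi(\hat a\cap\hat b)$, which is precisely (I).

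The computation is essentially bookkeeping once $P_TP_{T'}=P_{T\cap T'}$ is in hand; the step demanding genuine care is the rigorous justification of the expansion $x=\sum_c s_c(x)$ and the term-by-term action of $P_T$ on it in the possibly uncountable, non-finitely-supported setting, where the closedness built into the $\pig$ notation and the mutual orthogonality of $(S_c)$ must be used precisely. The reduction of $H(\hat a\cap\hat b)$ to $\pig_{c\in\hat a\cap\hat b}S_c$ is the other point to treat with attention, but it follows directly from the double-closure lemma and the stability of lower sets under intersection.
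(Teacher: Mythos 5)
Your proof is correct, and it follows the same overall strategy as the paper's (read the intersection property directly off the orthogonal decomposition), but the technical execution differs enough to be worth comparing. The paper never invokes the norm-convergent expansion $x=\sum_{c}s_c(x)$: it first proves the pointwise identity $\pi_a s_b^{\perp}=1[b\leq a]\,s_b^{\perp}$ by testing inner products against the dense subspace $\bigoplus_{c\leq a}S_c$, then shows that $\pi_a\pi_b u$ lies in $H(\hat{a}\cap\hat{b})$ — using self-adjointness, $\langle \pi_a\pi_b u,v\rangle=\langle u,\pi_b\pi_a v\rangle=0$ for $v$ in the complementary pieces, together with Proposition \ref{d-h:reorganise-sum} — and finally that $u-\pi_a\pi_b u$ is orthogonal to $H(\hat{a}\cap\hat{b})$, so that $\pi_a\pi_b u$ is forced to be the orthogonal projection of $u$ by uniqueness. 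Your route instead establishes the stronger and cleaner law $P_TP_{T'}=P_{T\cap T'}$ for arbitrary subsets $T,T'\subseteq \A^{\Plus}$, at the cost of rigorously justifying the unconditional convergence of the orthogonal series and the term-by-term action of $P_T$ on it — exactly the point you flag as delicate (it is fine: Bessel plus completeness gives the expansion, and boundedness of $P_T$ lets it pass through the sum). What your approach buys is a reusable commutation identity and a transparent coordinate picture; what the paper's buys is economy, since density plus self-adjointness of projections sidestep all convergence questions about infinite orthogonal sums. Both arguments need, and both correctly supply, the same identification $H(\hat{a}\cap\hat{b})=\pig_{c\in \hat{a}\cap\hat{b}}S_c$, via Lemma \ref{d-h:lemma-double-sum} and the fact that an intersection of lower sets is again a lower set.
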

\begin{proof}
Let $(H_a\in \vect{H},a\in \A)$ be a decomposable collection of subspace of $H$; let $(S_a,a\in \A)$ be its decomposition. Let $v\in \bigoplus_{c\leq a} S_c$ and $u\in S_b$,
\begin{equation}\label{decomposition:nedessary-condition-equation}
\langle v,\pi_a(u)\rangle=\langle v,u\rangle=1[b\leq a] \langle v,u\rangle
\end{equation}

Furthermore as $\sum_{c\leq a} S_c$ is dense in $H_a$, one has that for any $v\in H_a$, $\langle v,\pi_a(u)\rangle= 1[b\leq a] \langle v,u\rangle$; therefore $\pi_as_b^\perp= 1[b\leq a]s_b^\perp$.\\

Let $v\in \bigoplus_{c\not \in \hat{a}\cap\hat{b}}S_c$, and $u\in H$ then by Equation (\ref{decomposition:nedessary-condition-equation}) one has that,

\begin{equation}
\langle \pi_a\pi_bu,v \rangle= \langle u,\pi_b\pi_av \rangle=0
\end{equation} 

Therefore by Proposition \ref{d-h:reorganise-sum}, $\pi_a\pi_bu\in \pig_{c\in \hat{a}\cap\hat{b}}S_c=H(\hat{a}\cap\hat{b})$; furthermore for any $v\in H(\hat{a}\cap\hat{b})$ as $v\in H(a)\cap H(b)$,
\begin{equation}
\langle u,v\rangle=\langle\pi_b u,v\rangle=\langle\pi_bu,\pi_a v\rangle= \langle\pi_a\pi_b u,v\rangle
\end{equation}

Therefore, $\pi_a\pi_b=\pi(\hat{a}\cap\hat{b})$.
\end{proof}
%\vspace{0.5cm}

\begin{lem}\label{d-h:orthogonal}
 Let $\A$ be any poset, let $H$ be a Hilbert space, let $(H_a\in \vect{H},a\in \A)$ be an increasing collection of subspace of $H$ that satisfies the intersection property, then for any $a,b\in \A^{+1}$,
 
 \begin{equation}
 s_a^\perp s_b^\perp=\delta_a(b)s_a^\perp
 \end{equation}
\end{lem}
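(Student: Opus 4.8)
The plan is to prove the stronger geometric statement that the subspaces $(S_a, a\in\A^{\Plus})$ are pairwise orthogonal. Since each $s_a^\perp$ is an orthogonal projection it is idempotent, so the diagonal case $a=b$ gives $s_a^\perp s_a^\perp=s_a^\perp$ for free, and it remains only to establish $S_a\perp S_b$ (equivalently $s_a^\perp s_b^\perp=0$) whenever $a\neq b$. Because $(s_a^\perp s_b^\perp)^*=s_b^\perp s_a^\perp$ and both factors are self-adjoint, the vanishing $s_a^\perp s_b^\perp=0$ is symmetric in $a,b$, so throughout I may order the pair however is convenient. I will also use freely the formula $S_a=H_a\cap\bigcap_{c\lneqq a}H_c^\perp$ from Proposition \ref{d-h:interaction-subspaces}.

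First I would dispose of the comparable case, which in particular covers every pair in which one index is the adjoined top element $1$, since $1$ is comparable to all of $\A^{\Plus}$. Assuming without loss of generality that $b\lneqq a$, the formula for $S_a$ shows directly that $S_a\subseteq H_b^\perp$, while $S_b\subseteq H_b$; hence $S_a\perp S_b$. No appeal to the intersection property is needed here, and when $a=1$ one simply uses $S_1\subseteq\bigcap_{c\in\A}H_c^\perp\subseteq H_b^\perp$.

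The substantial case is when $a,b$ are incomparable; then necessarily $a,b\in\A$, so the intersection property (I) is available. The key move is to rewrite the inner product through the composite of the two projections. For $v\in S_a\subseteq H_a$ and $u\in S_b\subseteq H_b$ one has $\pi_a v=v$ and $\pi_b u=u$, so, using self-adjointness of $\pi_b$, then $v=\pi_a v$, and finally (I) together with $\hat a\cap\hat b=\hat b\cap\hat a$,
\[
\langle v,u\rangle=\langle v,\pi_b u\rangle=\langle \pi_b v,u\rangle=\langle \pi_b\pi_a v,u\rangle=\langle \pi(\hat a\cap\hat b)\,v,u\rangle .
\]
It then suffices to check $\pi(\hat a\cap\hat b)\,v=0$. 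This is where the combinatorics of the poset enters: since $a\not\leq b$ we have $a\notin\hat a\cap\hat b$, so every $c\in\hat a\cap\hat b$ satisfies $c\lneqq a$; therefore $S_a\perp H_c$ for each such $c$, whence $S_a\subseteq\bigl(\sum_{c\in\hat a\cap\hat b}H_c\bigr)^\perp=H(\hat a\cap\hat b)^\perp$ and so $\pi(\hat a\cap\hat b)\,v=0$. Consequently $\langle v,u\rangle=0$ and $S_a\perp S_b$.

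The only real obstacle is this incomparable case, and specifically the step of expressing $\langle v,u\rangle$ through $\pi_b\pi_a$ so that the intersection property can collapse the composite into the single projection $\pi(\hat a\cap\hat b)$. Once that identity is secured, the orthogonality of $S_a$ to $H(\hat a\cap\hat b)$ is immediate from the definition of $S_a$ and the observation that all indices in $\hat a\cap\hat b$ lie strictly below $a$. Combining the two cases with the trivial diagonal case yields $s_a^\perp s_b^\perp=\delta_a(b)\,s_a^\perp$ for all $a,b\in\A^{\Plus}$.
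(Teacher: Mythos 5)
Your proof is correct and is essentially the paper's argument: the core step in both is to rewrite $\langle v,u\rangle$ for $v\in S_a$, $u\in S_b$ through the composite $\pi_b\pi_a$, collapse it to $\pi(\hat{a}\cap\hat{b})$ via the intersection property, and then use the fact that every $c\in\hat{a}\cap\hat{b}$ lies strictly below $a$ (or $b$) to conclude that this projection annihilates $S_a$ (or $S_b$). Your explicit split into comparable and incomparable pairs simply makes precise the without-loss-of-generality step that the paper leaves implicit in the claim ``if $a\neq b$ then $\hat{a}\cap\hat{b}\subsetneq\hat{b}$'', and cleanly covers the adjoined top element $1$, for which the intersection property is not needed.
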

\begin{proof}
If $a\neq b$ then $\hat{a}\cap \hat{b}\subsetneq \hat{b}$; for any $v,u\in H$,

\begin{equation}
\langle s_a^\perp (v),s_b^\perp (u)\rangle=\langle \pi_as_a^\perp (v),\pi_bs_b^\perp (u)\rangle=\langle \pi_b\pi_as_a^\perp (v),s_b^\perp (u)\rangle
\end{equation}

Therefore,
\begin{equation}
\langle s_a^\perp(v),s_b^\perp(u)\rangle=\langle\pi(\hat{a}\cap\hat{b})s_a^\perp (v),s_b^\perp (u)\rangle
\end{equation}

As $\pi(\hat{a}\cap\hat{b})s_a^\perp (v)\in \overline{\underset{c\lneq b}{\sum}H_c}$, by construction 
\begin{equation}
\langle\pi(\hat{a}\cap\hat{b})s_a^\perp (v),s_b^\perp (u)\rangle=0
\end{equation}

and so,
\begin{equation}
\langle s_a^\perp s_b(v),u\rangle=0
\end{equation}

As $s_a^\perp$ is a projector, ${s_a^\perp}^2=s_a^\perp$.
\end{proof}
%\vspace{0.5cm}

\begin{thm}\label{d-h:thm}
Let $\A$ be a well-founded poset, let $H$ be a Hilbert space, let $(H_a\in \vect{H}, a\in \A)$ be an increasing collection of subspaces of $H$ indexed on $\A$; $(H_a\in \vect{H}, a\in \A)$ is decomposable if and only if it satisfies the intersection property.
\end{thm}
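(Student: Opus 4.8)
Since one implication is already in hand---Lemma \ref{d-h:necessary-condition} shows that a decomposable collection satisfies the intersection property---the remaining work is the converse: assuming $(H_a,a\in\A)$ satisfies the intersection property, I would exhibit a decomposition. The candidate is forced by Proposition \ref{d-h:interaction-subspaces}, so I set $S_a=H_a\cap\bigcap_{b\lneqq a}H_b^\perp$ for every $a\in\A^{\Plus}$ (with $H_1=H$, so that $S_1=H(\A)^\perp$), and let $s_a^\perp$ be the orthogonal projection onto $S_a$. The first fact I can invoke is Lemma \ref{d-h:orthogonal}: under the intersection property the family $(S_a,a\in\A^{\Plus})$ is pairwise orthogonal, so that the closed orthogonal sums $\pig$ of subfamilies of $(S_a,a\in\A^{\Plus})$ are well defined.

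The heart of the argument is to show, for every $a\in\A$, that $H_a=\pig_{b\leq a}S_b$, and the plan is to do this by well-founded induction on $a$; this is exactly where the hypothesis that $\A$ is well-founded is used. Writing $P_a=\overline{\sum_{b\lneqq a}H_b}$, the orthogonal decomposition theorem inside the Hilbert space $H_a$ gives $H_a=P_a\oplus^\perp(H_a\cap P_a^\perp)$, and since $P_a^\perp=\bigcap_{b\lneqq a}H_b^\perp$ the second summand is precisely $S_a$; this step uses only that the collection is increasing, so that $P_a\subseteq H_a$. It then remains to identify $P_a$. Assuming the inductive hypothesis $H_b=\pig_{c\leq b}S_c$ for all $b\lneqq a$, I rewrite $P_a=\overline{\sum_{b\lneqq a}\overline{\sum_{c\leq b}S_c}}$ and apply Lemma \ref{d-h:lemma-double-sum} to drop the inner closures; since $\bigcup_{b\lneqq a}\hat{b}=\{c:c\lneqq a\}$, this collapses to $\overline{\sum_{c\lneqq a}S_c}=\pig_{c\lneqq a}S_c$, the last equality by orthogonality. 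Combining, $H_a=\pig_{c\lneqq a}S_c\oplus^\perp S_a=\pig_{c\leq a}S_c$, which closes the induction.

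Finally I would deduce the global decomposition $H=\pig_{a\in\A^{\Plus}}S_a$. From the previous step each $H_a$ lies in $\overline{\sum_{b\in\A}S_b}$, whereas each $S_b\subseteq H_b\subseteq H(\A)$, so $H(\A)=\pig_{a\in\A}S_a$; adjoining $S_1=H(\A)^\perp$ and using that $S_1\perp S_a$ for every $a\in\A$ yields $H=H(\A)\oplus^\perp S_1=\pig_{a\in\A^{\Plus}}S_a$, the required decomposition. I expect the main obstacle to be the bookkeeping of the inductive step over a possibly infinite well-founded poset: one must check that the intersection property, through Lemma \ref{d-h:orthogonal}, is genuinely what licenses replacing $\overline{\sum_{c\lneqq a}S_c}$ by a bona fide orthogonal sum, and that Lemma \ref{d-h:lemma-double-sum} correctly interchanges closures with the sum before the telescoping $\pig_{c\lneqq a}S_c\oplus^\perp S_a=\pig_{c\leq a}S_c$ can be applied.
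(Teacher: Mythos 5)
Your proposal is correct and follows essentially the same route as the paper's proof: the same candidate $S_a=H_a\cap\bigcap_{b\lneqq a}H_b^\perp$, the same well-founded induction establishing $H_a=\overline{\sum_{b\leq a}S_b}$ via the orthogonal decomposition $H_a=S_a\oplus^\perp\overline{\sum_{b\lneqq a}H_b}$ and Lemma \ref{d-h:lemma-double-sum}, and the same use of Lemma \ref{d-h:orthogonal} to make the sum orthogonal. Your handling of the maximal element and the global decomposition $H=\pig_{a\in\A^{\Plus}}S_a$ is merely more explicit than the paper's terse version of the same argument.
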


\begin{proof}
The necessary condition is Lemma \ref{d-h:necessary-condition}.\\

Let us show by transfinite induction on $\A$ that for any $a\in \A$, $H_a=\overline{\underset{b\leq a}{\sum}S_b}$.

Let $a\in \A$, let for any $b<a$, $H_b=\overline{\underset{c\leq b}{\sum}S_b}$, then by construction, $H_a=S_a\overset{\perp}{\bigoplus}\overline{\underset{b\lneqq a}{\sum}H_b}$. By Lemma \ref{d-h:lemma-double-sum}, $\overline{\underset{b\lneqq a}{\sum}H_b}=\overline{\underset{c\lneqq a}{\sum}S_c}$. This ends the proof by transfinite induction.\\

Lemma \ref{d-h:orthogonal} enable us to conclude that the sum is an othogonal one, which ends the proof.

\end{proof}

\begin{rem}
Once one has exhibited the good framework, i.e. the condition on the subspaces (Intersection property) and the condition on the poset (well founded), the proof of Theorem \ref{d-h:thm} is essentially repeating the one given in the particular case of factor spaces for the canonical scalar product as presented in Lemma 2.1 \cite{Speed}. However if one does not consider that the ambient space is a Hilbert space the methods for showing that the interaction decomposition exists if and only if the intersection property is satisfied are very different from the one present above as we will explain in Section \ref{d-h-2:No easy road}.
\end{rem}

\subsection{Simplification for meet semi-lattices}\label{d-h:Simplification for meet-lattices}

\begin{cor}\label{chapitre-4-decomposition-hilbert-cor-simple}
Let $\A$ be a well-founded meet semi-lattice. Let $H$ be a Hilbert space and $(H_a\in \vect{H},a\in \A)$ be a collection of subspace of $H$ such that for any $a,b\in \A$,

\begin{equation}
\pi_a\pi_b=\pi_{a\wedge b}
\end{equation}

Then $(H_a,a\in \A)$ is decomposable.
\end{cor}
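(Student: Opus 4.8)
The plan is to reduce the hypothesis $\pi_a\pi_b = \pi_{a\wedge b}$ to the intersection property of Definition \ref{d-h:intersection-property} and then invoke Theorem \ref{d-h:thm} directly. The entire content of the corollary is bookkeeping about lower-sets: I must show that for a meet semi-lattice the meet-condition $\pi_a\pi_b=\pi_{a\wedge b}$ is exactly the statement $\pi(\hat a\cap\hat b)=\pi_a\pi_b$, so that the collection $(H_a,a\in\A)$ satisfies the intersection property and decomposability follows for free.

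First I would establish the key set-theoretic identity $\widehat{a\wedge b}=\hat a\cap\hat b$ valid in any meet semi-lattice. The inclusion $\widehat{a\wedge b}\subseteq\hat a\cap\hat b$ is immediate since $a\wedge b\leq a$ and $a\wedge b\leq b$ give, for any $c\leq a\wedge b$, both $c\leq a$ and $c\leq b$. For the reverse inclusion, if $c\in\hat a\cap\hat b$ then $c\leq a$ and $c\leq b$, so by the defining property of the meet (Definition, meet semi-lattice) $c\leq a\wedge b$, i.e. $c\in\widehat{a\wedge b}$. Hence the two lower-sets coincide.

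Next I would observe that $\pi(\widehat{a\wedge b})=\pi_{a\wedge b}$ by the notational convention that $\pi(\hat c)$ is written $\pi_c$ for $c\in\A$, applied to $c=a\wedge b$. Combining this with the identity of the previous paragraph yields
\begin{equation}
\pi(\hat a\cap\hat b)=\pi(\widehat{a\wedge b})=\pi_{a\wedge b}.
\end{equation}
The hypothesis $\pi_a\pi_b=\pi_{a\wedge b}$ then gives $\pi(\hat a\cap\hat b)=\pi_a\pi_b$ for all $a,b\in\A$, which is precisely the intersection property $(\mathrm{I})$. Since $\A$ is well-founded by assumption, Theorem \ref{d-h:thm} applies and tells us that $(H_a,a\in\A)$ is decomposable, completing the argument.

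There is essentially no analytic obstacle here; the only subtlety worth flagging is that the corollary's hypothesis is stated only for $a,b\in\A$ (not for $\A^{\Plus}$), but this is exactly the range over which the intersection property of Definition \ref{d-h:intersection-property} is quantified, so no gap arises. The main point to get right is the unique step of genuine content, namely the identification $\widehat{a\wedge b}=\hat a\cap\hat b$, which is where the meet semi-lattice hypothesis is used; everything else is a direct translation into the framework of Theorem \ref{d-h:thm}.
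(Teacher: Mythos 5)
Your proof is correct and follows essentially the same route as the paper: the paper's entire proof is the one-line observation that in a meet semi-lattice $\widehat{a\wedge b}=\hat{a}\cap\hat{b}$, after which Theorem \ref{d-h:thm} applies. You simply spell out this identity and the translation of the hypothesis $\pi_a\pi_b=\pi_{a\wedge b}$ into the intersection property (I), which the paper leaves implicit.
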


\begin{proof}
When $\A$ is a semi-lattice $\widehat{a \cap b}=\hat{a}\cap \hat{b}$, therefore one concludes by Theorem \ref{d-h:thm}.
\end{proof}

\section{Extension to $\cat{IHilb}$}\label{chapitre-4-extension}

\subsection{Decomposability for functors in $\ih$}\label{d-h-2:Decomposability}
Let $H$ be a Hilbert space and $\F:\A\to \vect{H}$ be an increasing function; the poset $\vect{H}$ can be identified to the category that has as objects the Hilbert subspaces of $H$ and as morphisms the inclusion maps;  therefore $\F$ is a functor from $\A$ into this category. Furthermore the inclusions are in fact isometries, so $\vect{H}$ is a subcategory of $\ih$. We shall therefore consider in what follows, functors from a poset $\A$ to $\ih$.\\

Let us now try to find what is the good notion of decomposability for functors from a poset to $\ih$; in order to do so let us reformulate decomposability for functors from $\A$ to $\vect{H}$.\\

Let $\A$ be a poset, let $\cat{C}$ be any category; any object of $A$ of $\cat{C}$ induces a constant functor $iA:\A\to \cat{C}$, that we shall note as $A$.\\

For any collection of pre-Hilbert spaces, $(H_i,i\in I)$, one can define what we called its direct sum $\bigoplus_{i\in I}H_i$ (Appendix B Definition \ref{a-h-c:direct-sum}) and for any collection of pre-Hilbert spaces $(K_i,i\in I)$ and for any collection of morphism of $\ph$, $(\phi_i:H_i\to {H_1}_i,i\in I)$, we shall call $\phi$ the following morphism,

\begin{equation}
\begin{array}{ccccc}
\phi& : &\bigoplus_{i\in I}H_i & \to & \bigoplus_{i\in I}K_i\\
& & (v_i,i\in I) & \mapsto & (\phi_i(v_i),i\in I) \\
\end{array}
\end{equation}

$\phi$ is continuous and if $(\phi_i,i\in I)$ are isometries so is $\phi$.

\begin{prop}\label{d-h-2:decomposition-functor}
Let $I$ be any set and let $(G_i,i\in I)$ be a collection of functors from a poset $\A$ to $\cat{IPhilb}$; let for any $a\in \A$, 

\begin{equation}
(\underset{i\in I}{\bigoplus}G_i)(a)=\bigoplus_{i\in I}(G_i(a))
\end{equation}

and for any $a,b\in \A$ such that $b\leq a$,
\begin{equation}
(\underset{i\in I}{\bigoplus}G_i)(b\leq a)= (\underset{i\in I}{\bigoplus}G_i)^b_a=\underset{i\in I}{\bigoplus}{G_i}^b_a
\end{equation}

Then $\underset{i\in I}{\bigoplus}G_i$ is a functor from $\A$ to $\cat{IPhilb}$.
\end{prop}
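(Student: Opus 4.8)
The plan is to verify the three things that make the assignment $\bigoplus_{i\in I}G_i$ a functor into $\cat{IPhilb}$: that it is well defined on objects, that it is well defined on morphisms, and that it respects identities and composition. The first point is immediate from the construction recalled above: for each $a\in\A$ the family $(G_i(a),i\in I)$ is a family of pre-Hilbert spaces, so $\bigoplus_{i\in I}G_i(a)$ is a pre-Hilbert space by Appendix B Definition \ref{a-h-c:direct-sum}, and hence an object of $\cat{IPhilb}$.

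For the morphisms I would argue as follows. Fix $b\leq a$ in $\A$. Since each $G_i$ is a functor into $\cat{IPhilb}$, each component ${G_i}^b_a$ is an isometry of pre-Hilbert spaces. By the observation made just before the statement---that a direct sum of isometries $(\phi_i,i\in I)$ yields an isometry $\phi=\bigoplus_{i\in I}\phi_i$---the map $\bigoplus_{i\in I}{G_i}^b_a$ is an isometry, hence a morphism of $\cat{IPhilb}$. The only thing to check here is that this map is genuinely valued in $\bigoplus_{i\in I}G_i(a)$, i.e. that it sends finitely supported families to finitely supported families; this holds because each ${G_i}^b_a$ is linear and therefore sends $0$ to $0$, so the support of $({G_i}^b_a(v_i),i\in I)$ is contained in that of $(v_i,i\in I)$.

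It then remains to check the two functor axioms, both of which reduce to componentwise identities evaluated on an arbitrary finitely supported family $(v_i,i\in I)$. For identities, ${G_i}^a_a=\id_{G_i(a)}$ for every $i$, so $\bigoplus_{i\in I}{G_i}^a_a$ acts as $(v_i,i\in I)\mapsto(v_i,i\in I)$ and is the identity of $\bigoplus_{i\in I}G_i(a)$. For composition, given $c\leq b\leq a$ in $\A$, functoriality of each $G_i$ gives ${G_i}^c_a={G_i}^b_a\circ{G_i}^c_b$; evaluating the direct sums on $(v_i,i\in I)$ shows that $\bigoplus_{i\in I}{G_i}^c_a$ and $\left(\bigoplus_{i\in I}{G_i}^b_a\right)\circ\left(\bigoplus_{i\in I}{G_i}^c_b\right)$ agree on every coordinate, hence are equal.

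There is no genuine obstacle in this proof: the statement is essentially the assertion that the pre-Hilbert direct-sum operation is functorial in each of its arguments, and everything follows by working one coordinate at a time. The only mild subtlety worth spelling out is the well-definedness of the direct-sum morphism noted above---checking that finite support is preserved and that the resulting map is an isometry rather than merely a continuous linear map---and this is exactly what the remark preceding the statement supplies.
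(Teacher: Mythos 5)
Your proof is correct and takes essentially the same approach as the paper: both arguments reduce the functor axioms (identity and composition) to componentwise verification at the level of underlying vector spaces, and both obtain that $\bigoplus_{i\in I}{G_i}^b_a$ is an isometry from the fact that the direct-sum inner product splits as a sum over coordinates. The only cosmetic difference is that the paper phrases the composition check via the forgetful functor $U_1:\ph\to\ve$ and redoes the isometry computation explicitly, whereas you check coordinates directly and invoke the remark stated just before the proposition.
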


\begin{proof}
Let $U_1:\ph\to \ve$ be the forgetful functor, let $a,b,c\in \A$ such that $c\leq b\leq a$ then $U_1(\bigoplus_{i\in I}{G_i}^b_a)=\bigoplus_{i\in I}{G_i}^b_a$, where the direct sum is taken in $\ve$; therefore $U_1 \bigoplus_{i\in I}{G_i}^b_a \circ U_1\bigoplus_{i\in I}{G_i}^c_b=U_1 \bigoplus_{i\in I}{G_i}^c_a$ and $\bigoplus_{i\in I}{G_i}^b_a \bigoplus_{i\in I}{G_i}^c_b=\bigoplus_{i\in I}{G_i}^c_a$.\\

For any $a,b\in \A$ such that $a\geq b$, $\bigoplus_{i\in I} {G_i}^b_a$ is an isometry as, for any $v\in \bigoplus_{i\in I}G_i(b)$, 

\begin{equation}
\langle \bigoplus_{i\in I} {G_i}^b_a(v),\bigoplus_{i\in I} {G_i}^b_a(v)\rangle = \sum_{i\in I} \langle {G_i}^b_a(v_i), {G_i}^b_a(v_i)\rangle
\end{equation}

\end{proof}
%\vspace{0.5cm}

Let $G$ be a functor from a poset $\A$ to $\Vect$ or $\h$ or $\ph$, for $a\in \A$ we will denote $G1[a\leq .]$ the functor defined as for any $b\in \A$ such that $a\leq b$, 
$$G1[a\leq .](b)= G(b)$$

and if not $G1[a\leq .](b)=0$; for any $c\in \A$ such that $b\leq c$, if $a\leq b$ then,

$$G1[a\leq .]^b_c = G^b_c$$

\begin{prop}\label{d-h-2:decomposable}
Let $\A$ be any poset, let $H:\A\to \vect{H}$ be a functor; $(H_a,a\in \A)$ is decomposable in the sense of Definition \ref{d-h:decomposable} if and only if $H$ is isometrically isomorphic to $\com\underset{a\in \A}{\bigoplus} iS_a1[a\leq .]$, where $(S_a,a\in \A^{\Plus})$ is the decomposition of $(H_a,a\in \A)$.
\end{prop}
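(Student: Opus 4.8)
The plan is to prove the equivalence by computing the target functor objectwise and checking that the canonical identification of an external completed direct sum with an internal orthogonal sum assembles into a natural transformation. First I would unwind $\com\bigoplus_{a\in\A}iS_a1[a\leq.]$ explicitly. For each $b\in\A$ one has $iS_a1[a\leq.](b)=S_a$ when $a\leq b$ and $0$ otherwise, so by Proposition \ref{d-h-2:decomposition-functor} the functor $\bigoplus_{a\in\A}iS_a1[a\leq.]$ sends $b$ to $\bigoplus_{a\leq b}S_a$ in $\ip$, and sends a relation $b\leq c$ to the extend-by-zero isometry $\bigoplus_{a\leq b}S_a\to\bigoplus_{a\leq c}S_a$. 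Postcomposing with the completion functor $\com$ gives the functor $b\mapsto\com\bigoplus_{a\leq b}S_a$ whose transition maps are the completions of these extend-by-zero isometries.

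For the forward direction, suppose $(H_a,a\in\A)$ is decomposable with decomposition $(S_a,a\in\A^{\Plus})$, so that $H_a=\pig_{b\leq a}S_b$ inside $H$ with the $S_b$ pairwise orthogonal. Since the $S_b$ are orthogonal, $\pig_{b\leq a}S_b=\overline{\sum_{b\leq a}S_b}$, and applying the universal property of the completion (Proposition \ref{a-h-c:completion}) to the inclusion $\bigoplus_{b\leq a}S_b\hookrightarrow H$ produces a canonical isometric isomorphism $\phi_a:\com\bigoplus_{b\leq a}S_b\to\pig_{b\leq a}S_b=H_a$. I would take the family $(\phi_a^{-1},a\in\A)$ as the candidate natural isometric isomorphism from the functor $a\mapsto H_a$ to $\com\bigoplus_{a\in\A}iS_a1[a\leq.]$.

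The only point requiring verification, and the main obstacle, is naturality. For $b\leq a$ in $\A$ I must check that the square whose vertical arrows are the inclusion $H_b\hookrightarrow H_a$ and the completed extend-by-zero map $\com\bigoplus_{c\leq b}S_c\to\com\bigoplus_{c\leq a}S_c$ commutes after composing with the $\phi$'s. This I would verify on the dense subspace $\bigoplus_{c\leq b}S_c$: a finite sum $\sum_{c\leq b}v_c$ with $v_c\in S_c$ is carried by $\phi_b$ to the same vector regarded in $H$; its inclusion into $H_a$ is unchanged, and its decomposition in $H_a=\pig_{c\leq a}S_c$ has components $v_c$ for $c\leq b$ and $0$ for the remaining indices, which is exactly the image under extend-by-zero followed by $\phi_a$. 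All four maps being continuous, agreement on a dense subspace forces agreement everywhere, so the square commutes and $(\phi_a^{-1})$ is natural.

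For the backward direction, given a natural isometric isomorphism $\psi$ onto $\com\bigoplus_{a\in\A}iS_a1[a\leq.]$, I would transport the coordinate decomposition of the abstract target back through $\psi$: for each $a\in\A$ the coordinate inclusion of $S_a$ into $\com\bigoplus_{c\leq a}S_c$ pushes forward along $\psi_a^{-1}$ to a subspace of $H_a\subseteq H$, and these subspaces (together with the complementary summand $S_1$ furnished by the convention $H_1=H$) are pairwise orthogonal because $\psi_a$ is isometric. Naturality then identifies $H_a$ with $\pig_{b\leq a}S_b$ and the ambient space with $\pig_{a\in\A^{\Plus}}S_a$, which is precisely Definition \ref{d-h:decomposable}; when $\psi$ is the canonical isomorphism above these transported subspaces coincide with the given $(S_a)$. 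No analytic difficulty arises beyond the Remark on $\com\bigoplus W_a\cong\pig W_a$ and the density argument; the work is entirely the bookkeeping of naturality and the repeated passage between the internal orthogonal sum $\pig$ and the external completed sum $\com\bigoplus$ via the universal property of $\com$.
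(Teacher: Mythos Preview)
Your proof is correct and follows essentially the same route as the paper. The paper's argument is terser: for the forward direction it names the isomorphism as $v\mapsto\bigoplus_{b\leq a}s_b^{\perp}(v)$ (the inverse of your $\phi_a$) and simply asserts that the inclusions match the direct-sum transition maps, whereas you verify naturality explicitly on a dense subspace; for the backward direction the paper only records the orthogonality relation $\langle u,v\rangle=\delta_b(c)\langle u,v\rangle$, while you spell out the transport of coordinate summands and the role of $S_1$---but the underlying idea is the same.
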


\begin{proof}
Let $(H_a,a\in \A)$ be decomposable, then for any $a\in \A$,

\begin{equation}
H_a=\com \bigoplus_{b\leq a} S_b= \com \bigoplus_{b\in \A} S_b(a) 1[b\leq a]
\end{equation}

In other words, the application that sends any $v\in H_a$ to $\bigoplus_{b\leq a} s_b^{\perp}(v)$ is an isomorphic isometry. Furthermore for $b\in \A$ such that $b\leq a$, the inclusion of $H(b)$ to $H(a)$ corresponds, up to this isomophic isometry, to $\com\underset{c\in \A}{\bigoplus} iS_c1[c\leq .]^b_a$.\\

The other way around, one only needs to remark that when $H_a$ is isometrically isomorphic to $\bigoplus_{b\leq a} S_b$ then, for any $b,c\in \A$ such that $b\leq a$ and $c\leq a$, and for $u\in S_b$, $v\in S_c$, then

\begin{equation}
\langle u, v\rangle = \delta_b(c) \langle u, v\rangle
\end{equation}
\end{proof}
%\vspace{0.5cm}

As for any $a\in \A$, $S_a$ is a functor from $\A$ to $\ih$, Propositions \ref{d-h-2:decomposition-functor}, \ref{d-h-2:decomposable} unables us to conclude that decomposability can be directly stated for functor in $\ih$ as that it is  equal to a decomposition up to isomorphism; lets make this more formal.\\

\begin{defn}[Decomposability for functors in $\ih$]\label{d-h-3:def-decomposable}
Let $\A$ be any poset, a functor $G:\A\to\cat{IHilb}$ is decomposable if there is a collection of functors $(S_a:\A\to\core\ih,a\in \A)$ such that, 
\begin{equation}
G\cong \com \underset{a\in \A}{\bigoplus}S_a1[a\leq.]
\end{equation}

i.e. there is a natural transformation, $(\phi_a,a\in \A)$, where for any $a\in \A$, $\phi_a$ is an isometric isomorphism from $G(a)$ to $\com \bigoplus_{b\leq a}S_b(a)$. $(S_a 1[a\leq.],a\in \A)$ will be called a decomposition of $G$.
\end{defn}
%\vspace{0.5cm}

\begin{prop}\label{d-h-2:thm-lem}
Let $\A$ be any poset and $G$ be a functor from $\A$ to $\ih$. If $G$ is decomposable then there is $H$ a Hilbert space and a functor $\U:\A\to \vect H$ such that $G\cong \U$.

\end{prop}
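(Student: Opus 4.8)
The plan is to reduce to the explicit model $F:=\com\bigoplus_{a\in\A}S_a1[a\leq.]$ and then rigidify it. By hypothesis there is a collection $(S_a:\A\to\core\ih,a\in\A)$ with $G\cong F$, so by transitivity of $\cong$ it suffices to produce a Hilbert space $H$ and a functor $\U:\A\to\vect H$ with $F\cong\U$. First I would spell out the transition maps of $F$: for $b\leq a$ the map $F^b_a=\com\bigoplus_{c\in\A}(S_c1[c\leq.])^b_a$ carries the summand indexed by $c$ (with $c\leq b$) isometrically isomorphically onto $S_c(a)$ via $S_c{}^b_a$, and annihilates the summands with $c\leq a$ but $c\not\leq b$. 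Thus $F$ fails to be literally an increasing family of subspaces of a fixed space only because the summand $S_c(a)$ still depends on $a$.

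The key observation that removes this dependence is that each $S_c$ takes values in $\core\ih$, so every $S_c{}^b_a$ is an isometric \emph{isomorphism}, and the up-set $\{a\in\A : a\geq c\}$ on which these summands are supported has $c$ as a least element. Hence for every $a\geq c$ there is a distinguished isometric isomorphism $\theta_{c,a}:=S_c{}^c_a : S_c(c)\to S_c(a)$, and functoriality of $S_c$ yields the coherence $S_c{}^b_a=\theta_{c,a}\theta_{c,b}^{-1}$ for all $c\leq b\leq a$.

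Now set $T_c:=S_c(c)$ and $H:=\com\bigoplus_{c\in\A}T_c$. For $a\in\A$ I would put $\U(a):=\overline{\bigoplus_{c\leq a}T_c}$, the closure inside $H$ of the sum of the mutually orthogonal summands $T_c$ with $c\leq a$; this is a Hilbert subspace of $H$, and $b\leq a$ gives $\U(b)\subseteq\U(a)$, so $\U$ is an increasing family, i.e. a functor $\A\to\vect H$ with inclusions as transition maps. Finally I define $\psi_a:F(a)=\com\bigoplus_{c\leq a}S_c(a)\to\U(a)$ as the completion of $\bigoplus_{c\leq a}\theta_{c,a}^{-1}$, which is an isometric isomorphism since each $\theta_{c,a}^{-1}$ is. Naturality $\psi_a F^b_a=\U^b_a\psi_b$ reduces, summand by summand on the dense subspace $\bigoplus_{c\leq b}S_c(b)$, to the identity $\theta_{c,a}^{-1}\bigl(\theta_{c,a}\theta_{c,b}^{-1}\bigr)=\theta_{c,b}^{-1}$, which is exactly the coherence above; it then extends to the completions by continuity. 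This gives a natural isometric isomorphism $F\cong\U$, whence $G\cong\U$.

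The only real content, and the step I expect to be the crux, is the coherent trivialization in the second paragraph: one must use both that $S_c$ lands in $\core\ih$ (to obtain isomorphisms rather than mere isometries) and that $c$ is least in its own up-set (to obtain a canonical base point $S_c(c)$ making the chosen isomorphisms compatible across all $a$, so that $\psi$ is natural). Everything else — that the summands $T_c$ are orthogonal in $H$, that each $\U(a)$ is closed, and that the $\psi_a$ pass to completions — is routine and follows the bookkeeping already used in Propositions \ref{d-h-2:decomposition-functor} and \ref{d-h-2:decomposable}.
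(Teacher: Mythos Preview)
Your proof is correct and takes essentially the same approach as the paper: both replace each $S_c$ by the constant functor $S_c(c)1[c\leq .]$ via the isometric isomorphisms $S_c{}^c_a$ (using that $S_c$ lands in $\core\ih$ and that $c$ is least in its own up-set), so that the completed direct sum becomes an increasing family of Hilbert subspaces of $H=\com\bigoplus_{c\in\A}S_c(c)$. Your write-up spells out the coherence $S_c{}^b_a=\theta_{c,a}\theta_{c,b}^{-1}$ and the resulting naturality of $\psi$ that the paper's one-line sketch leaves implicit; one minor phrasing slip is ``annihilates the summands with $c\leq a$ but $c\not\leq b$'', since those summands lie in the codomain of $F^b_a$ rather than its domain --- you presumably mean they are orthogonal to the image.
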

\begin{proof}

Let $G$ be decomposable and let $(S_a,a\in \A)$ be a decomposition of $c\in \A$, then $S_c(c)1[c\leq .]\cong S_c$ therefore $\com\bigoplus_{c\in \A} S_c \cong \bigoplus_{c\in \A} S_c(c) 1[c\leq .]$ and for any $a\in\A$, $\cong \bigoplus_{c\leq a} S_c(c)$ is a subspace of $\bigoplus_{c\in \A} S_c(a)$ which ends the proof.

\end{proof}

%\vspace{0.5cm}
\subsection{No short-cut}\label{d-h-2:No easy road}

If any functor from $\A$ to $\ih$ was isometrically isomorphic to a functor from $\A\to \vect{H}$ for some Hilbert space $H$, the last section would give an answer to when one can say that a functor in $\ih$ is decomposable; however we will now explain why it is not the case.\\

\begin{nota}
Let us note $\f$ the forgetful functor from $\ih$ to $\cat{Vect}$.
\end{nota}

\begin{prop}%\label{d-h-2:G-isomorphism}
Let $\A$ be any poset and $G:\A\to \cat{IHilb}$ be a functor, then $\eta:VG\to\colim_{a\in \A}\f G$ is a monomorphism of functors of $\ve$.
\end{prop}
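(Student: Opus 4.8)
The plan is to reduce the assertion to the injectivity of each leg of the colimit and then to produce those injectivity witnesses from the isometric structure of $G$. Since $\ve$ is abelian and monomorphisms in the functor category $[\A,\ve]$ are detected pointwise, showing that $\eta$ is a monomorphism of functors amounts to showing that, for every $a\in\A$, the canonical leg $\eta_a:VG(a)\to \colim_{b\in\A}VG(b)$ is injective. First I would fix the explicit presentation of the colimit in $\ve$ as the quotient $\bigl(\bigoplus_{b\in\A}VG(b)\bigr)/W$, where $W$ is spanned by the elements $\iota_b(v)-\iota_{b'}\bigl(VG(b\le b')(v)\bigr)$ for $b\le b'$ and $v\in VG(b)$, and where $\eta_b$ is the composite of $\iota_b$ with the quotient map.

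To prove injectivity of $\eta_a$ I would construct a left inverse $r_a:\colim_{b}VG(b)\to VG(a)$ with $r_a\eta_a=\id_{VG(a)}$. By the universal property of the colimit this is the same datum as a cocone $(r_a^b:VG(b)\to VG(a))_{b\in\A}$ satisfying $r_a^{b'}\circ VG(b\le b')=r_a^b$ for all $b\le b'$, together with the normalisation $r_a^a=\id$. The crucial extra input beyond injectivity of the transition maps is that each $VG(b\le b')$ is the underlying map of an isometry and therefore admits an adjoint, i.e. a splitting $VG(b\le b')^\dagger\circ VG(b\le b')=\id$ whose image side $VG(b\le b')\,VG(b\le b')^\dagger$ is an orthogonal projection. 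The natural candidate is to take $r_a^b$ to be the appropriate composite of structure maps and their adjoints built from a comparison between $b$ and $a$, arranged so that $r_a^a=\id$ and so that $r_a$ annihilates exactly the defining relations of $W$.

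The main obstacle is the coherence of this family: verifying the cocone identity $r_a^{b'}\circ VG(b\le b')=r_a^b$ when $b,b'$ and $a$ are not linearly comparable, since the adjoint projections attached to different edges of $\A$ need not commute. This is precisely the point at which the bare vector-space structure is insufficient and the Hilbertian hypotheses (existence and compatibility of the adjoints, together with the orthogonality relations they encode) must be used; it is also the point at which the passage from a single ambient Hilbert space, as in Proposition \ref{d-h-2:thm-lem}, to a genuine diagram in $\ih$ becomes delicate, which is exactly the difficulty the remainder of this section addresses. Once coherence is established for each $a$, the retractions $r_a$ exhibit every $\eta_a$ as a split monomorphism, and pointwise detection of monomorphisms in $[\A,\ve]$ then yields that $\eta$ is a monomorphism of functors.
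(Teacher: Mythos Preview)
Your proposal has a genuine gap: you identify the coherence of the cocone $(r_a^b)_b$ as ``the main obstacle'' and then never resolve it. Deferring to ``the remainder of this section'' is not a proof, and in fact the remainder of the section treats a different question (the failure of the analogous map into the Hilbert completion to be an isometry or even injective), not the coherence you need. Worse, the retraction strategy is unlikely to succeed as stated: a cocone requires maps $r_a^b:VG(b)\to VG(a)$ for \emph{every} $b\in\A$, including $b$ incomparable to $a$, and there is no canonical choice built from structure maps and their adjoints in that case. Making such a family satisfy $r_a^{b'}\circ VG(b\le b')=r_a^b$ for all $b\le b'$ essentially forces compatibilities among the orthogonal projections $G^b_\alpha {G^b_\alpha}^\dagger$ that are equivalent to the intersection property---but the proposition is asserted for an \emph{arbitrary} functor $G:\A\to\ih$, with no such hypothesis.

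The paper's proof is much shorter and uses far less. Since each $G^b_a$ is an isometry, ${G^b_a}^\dagger G^b_a=\id$, so every transition map $VG^b_a$ is injective in $\ve$. One then invokes a general fact about colimits in $\ve$ (Proposition~2.1 of \cite{GS2}): for a poset-indexed diagram whose transition maps are all injective, each leg $\eta_a$ into the colimit is injective. No Hilbert structure beyond the injectivity of the transition maps is used, and no global retraction is constructed. Your instinct that ``the bare vector-space structure is insufficient'' is therefore inverted: once injectivity is known, the argument is entirely in $\ve$, and the Hilbertian difficulties you allude to belong to Section~\ref{d-h-2:No easy road}, where the conclusion is precisely that the Hilbert-space analogue of this proposition \emph{fails}.
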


\begin{proof}
For any $a,b\in \A$ such that $b\leq a$, ${G^b_a}^\dagger G^b_a=\id$; therefore $G^b_a$ is injective and so is $\f G^b_a$, therefore by Proposition 2.1 \cite{GS2}, $\eta_a:G(a)\to\underset{a\in \A}{\colim}\f G$ is injective.\\
\end{proof}
%\vspace{0.5cm}

For any pre-Hilbert space $H$ and any closed pre-Hilbert subspace $H_1$ of $H$, one can define the pre-Hilbert space $H/H_1$ (see Appendix Proposition \ref{a-h-c:quotient-map}).

\begin{nota}
Let $G$ be a functor from a category $\cat{C}$ to $\cat{Vect}$; we shall note $\ker G$ the vector subspace of $\underset{a\in \A}{\bigoplus}G(a)$ generated by the collection $(G^b_a(v_b)\times a-v_b\times b \vert (b\leq a, v_b\in G(b)))$. 
\end{nota}

For a functor $G:\A\to \ih$, $\eta :G\to \colim_{a\in \A}VG$ induces a morphism $\rho:G\to \bigoplus G(a)/ \overline{\ker G}$, where $\overline{\ker G}\subseteq \bigoplus G(a)$. Let us remark that $\ker G$ is closed in $\bigoplus G(a)$, but we want to insist on this property so we will refer to it as $\overline{\ker G}$. By composition with the injection from $\bigoplus G(a)/ \overline{\ker G}$ to $\com (\bigoplus G(a)/ \overline{\ker G})$, it induces a morphism $\psi:G\to \com (\bigoplus G(a)/ \overline{\ker G})$. Let us remark that by Proposition \ref{a-h-c:adjoint-colimit} and the fact that $\com \ker G =\com  \overline{\ker G}$, one has that,

\begin{equation}
\com (\bigoplus G(a)/ \overline{\ker G})=\com \bigoplus G(a)/\com \ker G
\end{equation}

 In general $\psi$ is not monomorphism and even when it is, it is not a collection of isometries. \\

Indeed in general $\ker G$ is not closed in $\com \bigoplus_{a\in \A}G(a)$ and we shall now give a counter example. Let $\A=\N$ and $H$ a Hilbert space, for any $n\in \N$ let $G(n)=H$ and for any $m\geq n$ let $G^n_m=\id$. The orthogonal to $\ker G$ in $\com \bigoplus_{n\in \N}H$ is $0$ as we shall show. Any collection $(v_n,n\in \N)$ is in $\ker G^{\perp}$ if and only if, for any $u\in H$ and $n\in \N$,

\begin{equation}
\langle v_n,u\rangle=\langle v_{n+1},u\rangle
\end{equation}

Therefore $(v_n,n\in \N)$ is a constant sequence; however $\lim_{n\to\infty}||v_n||=0$ therefore $(v_n,n\in \N)=0$. Then $\overline{\ker G}=\com \bigoplus_{a\in \A}G(a)$ and $\ker G$ is dense in $\bigoplus_{a\in \A}G(a)$; finaly let $v \in H$; pose $v_0=v$ and $v_n=0$ for $n\in \N$, $v$ is not in $\ker G$. In this example, $\psi_a=0$ for any $a\in \A$.\\

Let us now show that even when $\psi$ is injective, it is in general not a morphism of functor from $\A$ to $\ih$.\\

\begin{prop}\label{d-h-2:not-isometry}
Let $\A$ be any poset, let us consider a funtor $G:\A\to \cat{IHilb}$. $\com G/\com \ker G$ is isometrically isomorphic to $(\com \ker G)^\perp\subseteq \com \bigoplus_{a\in \A}G(a)$.
\end{prop}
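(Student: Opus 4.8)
The plan is to reduce the statement to the classical identification of a Hilbert-space quotient by a closed subspace with the orthogonal complement of that subspace. Write $\mathcal{H} := \com\bigoplus_{a\in\A}G(a)$, which is a Hilbert space, and $M := \com\ker G$, which is complete and hence a closed subspace of $\mathcal{H}$ (indeed $M$ is the closure of $\ker G$ inside $\mathcal{H}$). Since $\mathcal{H}$ is complete and $M$ is closed, the projection theorem yields the orthogonal decomposition $\mathcal{H} = M \overset{\perp}{\bigoplus} M^\perp$, and I would work with the quotient $\mathcal{H}/M$ as constructed in Proposition \ref{a-h-c:quotient-map}.

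Next I would examine the restriction to $M^\perp$ of the canonical quotient map $q:\mathcal{H}\to\mathcal{H}/M$, namely $\Phi := q|_{M^\perp}: M^\perp \to \mathcal{H}/M$. Injectivity is immediate, since $\ker \Phi = M^\perp \cap M = \{0\}$. Surjectivity follows from the orthogonal decomposition above: any $x\in\mathcal{H}$ writes uniquely as $x = m + n$ with $m\in M$ and $n\in M^\perp$, so that $q(x) = q(n) = \Phi(n)$. Thus $\Phi$ is a linear bijection, and it remains only to control norms.

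The crux is to check that $\Phi$ is isometric. For $x\in M^\perp$ the quotient norm is $\|\Phi(x)\| = \inf_{m\in M}\|x-m\|$, and by the Pythagorean identity $\|x-m\|^2 = \|x\|^2 + \|m\|^2$ for every $m\in M$, so the infimum is attained at $m=0$ and equals $\|x\|$. Hence $\Phi$ preserves norms, and by the polarization identity (valid over $\K=\R$ and $\K=\C$) it preserves the inner product as well; therefore $\Phi$ is an isometric isomorphism identifying $\mathcal{H}/M = \com G/\com\ker G$ with $(\com\ker G)^\perp$.

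I do not expect a genuine obstacle here: the argument is the standard projection-theorem computation. The only point that deserves care is that the inner product on $\mathcal{H}/M$ supplied by Proposition \ref{a-h-c:quotient-map} is exactly the one for which $\Phi$ is isometric; I would settle this via polarization once the norms are shown to agree, rather than by unwinding quotient representatives, so that the verification stays at the level of norms and never needs explicit inner-product manipulations on cosets.
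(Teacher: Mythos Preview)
Your argument is correct and is essentially the paper's own proof: the paper takes the orthogonal projection $p:\com\bigoplus_{a\in\A}G(a)\to(\com\ker G)^\perp$ and observes that $\Vert [v]\Vert=\inf_{u\in\overline{\ker G}}\Vert v+u\Vert=\Vert p(v)\Vert$, which is precisely the inverse of your map $\Phi=q|_{M^\perp}$ together with your Pythagorean computation of the quotient norm. The only cosmetic difference is the direction of the isomorphism exhibited.
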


\begin{proof}
Let $p: \com \bigoplus_{a\in \A}G(a) \to (\com \ker G)^\perp$ be the orhogonal projection on $(\com \ker G)^\perp$; by definition it is surjective furthermore for any $v\in \com \bigoplus_{a\in \A} G(a)$, 
\begin{equation}
\Vert [v]\Vert=\underset{u\in \ker G}{\inf} \Vert v+u\Vert=\underset{u\in \overline{\ker G}}{\inf} \Vert v+u\Vert
\end{equation}

and, 

\begin{equation}
\underset{u\in \overline{\ker G}}{\inf} \Vert v+u\Vert=\Vert p(v)\Vert
\end{equation}

which ends the proof.

\end{proof}

Let $G$ be a functor from $\A$ to $\ih$, let $p: \com \bigoplus_{a\in \A}G(a) \to {\com \ker G}^\perp$ be the orhogonal projection on $(\com \ker G)^\perp\subseteq \com \bigoplus_{a\in \A} G(a)$. Let $a,b\in \A$ with $b< a$, let $v\in G(b)$,then

\begin{equation}
\langle v, v\times b - G^b_a(v)\times a\rangle=\Vert v\Vert^2 
\end{equation}

Therefore $v$ is not in $(\com \ker G)^\perp$ unless $||v||=0$ and if $||v||\neq 0$, by Proposition \ref{d-h-2:not-isometry},

\begin{equation}\label{d-h-2:contraction-injection}
\Vert \psi_b(v)\Vert =\Vert p(v)\Vert < \Vert v\Vert
\end{equation}

Therefore if $\A$ has at least two elements, $a,b\in \A$, such that $b<a$, then the morphisms of $\psi$ are not morphisms of $\ih$, i.e. are not isometries; in fact the only case where $\psi$ is an isometry is when the poset is a disjoint union of elements $I$, i.e. for $x,y\in I$,

\begin{equation}
x\leq y  \text{ if and only if } x=y
\end{equation}
 and in this case the colimit is a direct sum of Hilbert spaces.\\

 This is the reason why, contrarily to the decomposition in $\ve$ as discussed in \cite{GS2}, we can't directly apply Theorem \ref{d-h:thm} to extend this theorem to $\ih$. We shall therefore follow the (longer) path inspired by \cite{GS3} to show the an extension still holds.

\subsection{Extension to $\A_1$}\label{d-h-2:Main construction 1}
 Let us first give the idea of the construction we will present in what follows. We know the Theorem \ref{d-h:thm} holds for subpaces of a given Hilbert space, therefore for $H$ a functor from a poset $\A$ to $\ih$, we will want to nest the Hilbert spaces $(H_a,a\leq \alpha)$ inside a given Hilbert space $H_\alpha$ when this Theorem holds. What we will show is that nesting is functorial in $\ih$ and that the decomposition of each $H_\alpha$ are connected to each other in such way that we can rebuild a  decomposition (in the sense of Definition \ref{d-h-3:def-decomposable}) for $H$.\\

Let $\A_1$ be the subposet of $\A\times \A$ which elements are couples $(\alpha, a)$ such that $a\leq \alpha$.

\begin{rem}
Let us recall that if $\phi$ is an isometry then $\im \phi$, in the sense of its image in $\ve$, is a Hilbert space.
\end{rem}

For any poset $\A$, any $\alpha \in \A$ and for $G$ a functor from $\A$ to $\ih$, let us call $G^{\hat{\alpha}}_\alpha: G|_{\hat{\alpha}}\to G(a)$ the monomorphism induced by $G$. In \cite{GS3} we introduced the category $\Split$ that has as objects vector spaces and as morphisms between two objects $V, V_1$ couples of linear applications $s:M\to M_1$, $r:M_1\to M$ such that,

$$ rs =\id$$

In \cite{GS3} we argued that a functor $H$ from a poset $\A$ to $\Split$ is uniquely identified by a couple of functor/ presheaf, $(G,F)$, such that for any $a,b\in \A$,

\begin{equation}
F^a_bG^b_a=\id    
\end{equation}

where $F^a_b$ denotes $F(b\leq a)$ and $G^b_a$ denotes $G(b\leq a)$. In our case $(G,G^{\dagger})$ is a functor from $\A$ to $\Split$. 
For a function $f: A\to B$ between two sets $A$,$B$ such that for $C\subseteq A$ and $D\subseteq B$ and $f(C)\subseteq D$, we will denote $f|^D_C: C\to D$ its restriction to $C$ and $D$. 

For $\alpha\geq \beta \geq a\geq b$ elements of $\A$, $\im G^b_\alpha\subseteq \im G^a_\alpha$; we shall note the inclusion as $L^{\alpha b}_{\alpha a}$. Let $L^{\beta a}_{\alpha a}=G^{\beta}_{\alpha}|^{\im G^a_\alpha}_{\im G^a_\beta}$. We will call $L$ the left coupling of $G$ (for motivation of this denomination see Definition 4.4 \cite{GS3}).

\begin{prop}\label{d-h-2:G-isomorphism}
Let $G$ be a functor from $\A$ to $\cat{IHilb}$, let $\alpha \in \A$, then $G^{\hat{\alpha}}_\alpha|^{L(\alpha,.)}$ is an isomorphism.
\end{prop}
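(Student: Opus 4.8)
The plan is to unwind the notation and recognise $G^{\hat{\alpha}}_\alpha|^{L(\alpha,.)}$ as a natural transformation between two functors on the lower set $\hat{\alpha}$, and then verify it is a natural isomorphism by checking it componentwise. Here $L(\alpha,.)$ is the functor on $\hat{\alpha}$ given by $a\mapsto \im G^a_\alpha$, with structural maps the inclusions $L^{\alpha b}_{\alpha a}:\im G^b_\alpha\hookrightarrow \im G^a_\alpha$ for $b\leq a\leq\alpha$. This is well defined: from $G^b_\alpha=G^a_\alpha G^b_a$ we get $\im G^b_\alpha=G^a_\alpha(\im G^b_a)\subseteq \im G^a_\alpha$, so the inclusions make sense and compose as inclusions of nested closed subspaces of $G(\alpha)$; each $\im G^a_\alpha$ is a genuine object of $\ih$ because $G^a_\alpha$ is an isometry, hence its image is a Hilbert subspace. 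The transformation $G^{\hat{\alpha}}_\alpha$ is the cocone with components $G^a_\alpha:G(a)\to G(\alpha)$, and $|^{L(\alpha,.)}$ denotes corestricting the codomain of each component from the constant object $G(\alpha)$ to $\im G^a_\alpha=L(\alpha,.)(a)$.

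First I would check that the corestricted components assemble into a natural transformation $\Phi:G|_{\hat{\alpha}}\to L(\alpha,.)$, writing $\Phi_a:G(a)\to\im G^a_\alpha$ for the component at $a$. For $b\leq a\leq\alpha$ the relevant naturality square compares $\Phi_a\circ G^b_a$ with $L^{\alpha b}_{\alpha a}\circ\Phi_b$; evaluating on $v\in G(b)$, both sides equal $G^a_\alpha(G^b_a(v))=G^b_\alpha(v)$ viewed inside $\im G^a_\alpha$, so the square commutes and naturality holds.

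The main step is to show that each component $\Phi_a:G(a)\to\im G^a_\alpha$ is an isomorphism in $\ih$. Since $G$ takes values in $\ih$, the map $G^a_\alpha$ is an isometry, hence injective and norm preserving; corestricting its codomain to its own image $\im G^a_\alpha$ makes it surjective as well, so $\Phi_a$ is a bijective isometry, i.e.\ an isometric isomorphism. Since a natural transformation all of whose components are isomorphisms is automatically a natural isomorphism, I would then conclude that $\Phi=G^{\hat{\alpha}}_\alpha|^{L(\alpha,.)}$ is an isomorphism.

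I do not expect a genuine obstacle here: the content is essentially the observation that an injective isometry becomes invertible once its codomain is cut down to its image, the remaining work being the bookkeeping that $L(\alpha,.)$ really is a functor and that corestriction is compatible with the inclusion maps. The only point requiring care is keeping the two kinds of $L$-maps straight — the inclusions $L^{\alpha b}_{\alpha a}$ along the fiber over $\alpha$, which are the ones used here, versus the isometric isomorphisms $L^{\beta a}_{\alpha a}$ that transport a fixed image along $\beta\leq\alpha$ — and confirming that each $\im G^a_\alpha$ is closed, which again follows from $G^a_\alpha$ being an isometry.
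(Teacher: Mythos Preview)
Your proposal is correct and follows essentially the same approach as the paper: the paper's proof is the one-line observation that each component $H(a)=G^a_\alpha|^{\im G^a_\alpha}$ is a surjective isometry, hence an isomorphism. You have simply spelled out the surrounding bookkeeping (that $L(\alpha,.)$ is a functor, that corestriction is natural) which the paper leaves implicit.
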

\begin{proof}
Let us note $H=G^{\hat{\alpha}}_\alpha|^{L(\alpha,.)}$ for any $a\leq \alpha$, $H(a)$ is surjective and an isometry, therefore it is an isomorphism

\end{proof}

\begin{prop}\label{extension-left}
Let $G$ be a functor from $\A$ to $\cat{IHilb}$, let $L$ be its left coupling; $L$ is a functor from $\A_1$ to $\cat{IHilb}$.

\end{prop}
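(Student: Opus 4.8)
The plan is to give a single uniform formula for $L$ on all morphisms of $\A_1$, check it lands in $\ih$, and then read off functoriality directly from the functoriality of $G$. Recall that an object of $\A_1$ is a pair $(\alpha,a)$ with $a\leq \alpha$, and that $(\beta,b)\leq(\alpha,a)$ in $\A_1$ means exactly $\beta\leq\alpha$ and $b\leq a$. On objects I set $L(\alpha,a)=\im G^a_\alpha$, which is a Hilbert subspace of $G(\alpha)$ because $G^a_\alpha$ is an isometry and hence has closed image. For a morphism $(\beta,b)\leq(\alpha,a)$ I would propose
\begin{equation}
L^{\beta b}_{\alpha a}=G^\beta_\alpha\big|^{\im G^a_\alpha}_{\im G^b_\beta},
\end{equation}
the restriction of $G^\beta_\alpha$ to the subspace $\im G^b_\beta$ with codomain $\im G^a_\alpha$. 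Specialising $\beta=\alpha$ recovers the inclusion $L^{\alpha b}_{\alpha a}$ and specialising $b=a$ recovers $L^{\beta a}_{\alpha a}=G^\beta_\alpha|^{\im G^a_\alpha}_{\im G^a_\beta}$, so this single formula is consistent with the left coupling already defined on the two generating families.

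First I would check the formula is well defined, i.e. that $G^\beta_\alpha$ really maps $\im G^b_\beta$ into $\im G^a_\alpha$. By functoriality of $G$ one has $G^\beta_\alpha G^b_\beta=G^b_\alpha$, so $G^\beta_\alpha(\im G^b_\beta)=\im G^b_\alpha$; and since $b\leq a$, $G^b_\alpha=G^a_\alpha G^b_a$, whence $\im G^b_\alpha\subseteq\im G^a_\alpha$. Thus $G^\beta_\alpha(\im G^b_\beta)\subseteq\im G^a_\alpha$ and the restriction makes sense. Moreover $L^{\beta b}_{\alpha a}$ is a morphism of $\ih$, being the restriction of the isometry $G^\beta_\alpha$ to a closed subspace with codomain the Hilbert space $\im G^a_\alpha$, hence an isometry between Hilbert spaces.

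It then remains to verify the functoriality axioms. Identities are immediate, since $L^{\alpha a}_{\alpha a}=G^\alpha_\alpha|^{\im G^a_\alpha}_{\im G^a_\alpha}=\id$. For composition I would take $(\gamma,c)\leq(\beta,b)\leq(\alpha,a)$ in $\A_1$ and compute, using $G^\beta_\alpha G^\gamma_\beta=G^\gamma_\alpha$,
\begin{equation}
L^{\beta b}_{\alpha a}\,L^{\gamma c}_{\beta b}=\big(G^\beta_\alpha G^\gamma_\beta\big)\big|^{\im G^a_\alpha}_{\im G^c_\gamma}=G^\gamma_\alpha\big|^{\im G^a_\alpha}_{\im G^c_\gamma}=L^{\gamma c}_{\alpha a},
\end{equation}
where the intermediate codomain $\im G^b_\beta$ causes no trouble because, by the same argument as above applied to $(\gamma,c)\leq(\beta,b)$, the image $G^\gamma_\beta(\im G^c_\gamma)=\im G^c_\beta$ lies in $\im G^b_\beta$. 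I do not expect a genuine obstacle: the only point requiring care is the bookkeeping of domains and codomains in the restrictions, together with the nesting of images that all follow from the single relation $G^y_z=G^x_z G^y_x$ for $y\leq x\leq z$. Once those containments are in place, functoriality of $L$ is simply inherited from that of $G$.
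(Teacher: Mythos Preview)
Your proof is correct and follows essentially the same idea as the paper: both realise $L^{\beta b}_{\alpha a}$ as a restriction of $G^\beta_\alpha$ and read off functoriality from that of $G$, with your uniform formula making the composition check cleaner and more complete than the paper's rather terse argument. One point worth carrying forward is that the paper additionally observes (via Proposition \ref{d-h-2:G-isomorphism}) that the maps $L^{\beta a}_{\alpha a}$ with second coordinate fixed are not just isometries but isometric \emph{isomorphisms}, a fact used later in Section \ref{d-h-2:Decomposition natural transformations}.
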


\begin{proof}

Let $\A$ be any poset and $G:\A\to \ih$ be a functor from $\A$ to $\ih$ ; let $(\alpha,a),(\beta,a)\in \A_1$ be such that $\alpha\geq \beta$, the following diagram commutes

\begin{equation}
\begin{tikzpicture}[baseline=(current  bounding  box.center),node distance=2cm, auto]
\node (A) {$G(a)$ };
\node (B) [below of=A] {$G(\beta)$};
\node (C) [right of =B] {$G(\alpha)$};
\node (D) [right of =A] {$G(a)$};
\draw[->] (A) to node [left] {$G^a_\beta $} (B);
\draw[->] (B) to node {$G^\beta_\alpha$} (C);
\draw[->] (A) to node {$\id $} (D);
\draw[->] (D) to node {$G^a_\alpha $} (C);
\end{tikzpicture}
\end{equation}

and by Proposition \ref{d-h-2:G-isomorphism}, $G^{a}_{\alpha}|^{L(\alpha,a)}$ and $G^a_\beta |^{L(\beta,a)}$ are isomorphims, therefore $L^{\beta a}_{\alpha a}=G^{\beta}_{\alpha}|^{L(\beta,a)}_{L(\alpha,a)}$ is an isomorphism of $\ih$.  
\end{proof}
%\vspace{0.5cm}

\subsection{Extension to $\A_2$}\label{d-h-c:Main contruction 2}
\begin{defn}[Complete join-lattice]\label{d-h-2:complete-join-lattice}%adjoint
A complete join-lattice, $\A$, is a poset that has a join for any subset of the poset, i.e. for any set $I$ and collection $(a_i\in \A,i\in I)$ there is $\vee_{i\in I}a_i$ such that for all $c\in \A$ such that,
\begin{equation}
\forall i\in I \quad c\geq a_i 
\end{equation}

then one has that $\vee_{i\in I}a_i \leq c$.
\end{defn}
%\vspace{0.5cm}

\begin{rem}
Let $G$ be a functor from $\A$ to $\ih$, for any $a,b\in \A$ such that $a\geq b$, as $G^b_a$ is injective, it is a suboject of $G(a)$ (Appendix B Definition \ref{a-h-c:subobject}, Proposition \ref{a-h-c:mono-epi}) and a subspace of $G(a)$ up to isomorphism. 
\end{rem}

\begin{defn}

Let $\A$ be any poset and let $G$ be a functor from $\A$ to $\ih$; we shall note for any $\B\in \U(\alpha)$,
\begin{equation}
G^{\alpha}(\B)=\underset{a\in \B}{\bigvee}G^a_\alpha
\end{equation}

and for any $\B_1\in \U(\alpha)$ such that $\B_1\subseteq \B$, we shall note the inclusion $G^{\alpha}(\B_1)\leq G^{\alpha}(\B)$ as $\hG^{\alpha \B_1}_{\alpha \B}$. We shall also denote $G^{\alpha}(\B)$ as $\hG(\alpha, \B)$.\\

By Annex B Corollary \ref{a-h-c:join-lattice-isometry}, a representant of $G^\alpha (\B)$ is $\overline{\sum_{b\in \B}L(\alpha,b)}$ which reduces to $L(\alpha,b)$ when $\B=\hat{b}$. From now on we will consider that $G^\alpha (\B)$ is $\overline{\sum_{b\in \B}L(\alpha,b)}$.\\
 
 For $\alpha,\beta\in \A$ such that $\alpha\geq \beta$ and for $\B\in \U(\beta)$, let,
 
 \begin{equation}
 \hG^{\beta \B}_{\alpha \B}=G^{\beta}_{\alpha}|^{\hG(\beta, \B)}_{\hG(\alpha,\B)}
 \end{equation}
 
\end{defn}
%\vspace{0.5cm}

\begin{rem}
Let us recall that colimits do not exist in general in $\ih$ (Appendix B Proposition \ref{a-h-c:no-colimit}); if $\A$ is a poset, $a\in \A$, $\B\in \U(\A)$ and $G:\A\to \ih$ a functor from $\A$ to $\ih$ then $\bigvee_{b\in \B}G^b_{a}$ is in general not a colimit of $G|_{\hat{a}\setminus a}$.
 \end{rem}

\begin{prop}\label{d-h-2:extension-2}
For any poset $\A$ and functor $G:\A\to \ih$, $\hG$ extends to a unique functor from $\A_2\to \ih$. 
\end{prop}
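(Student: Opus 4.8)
The plan is to present $\A_2$ as generated, under composition, by the two families of morphisms on which $\hG$ has already been defined, to define $\hG$ on an arbitrary morphism through the one factorization that always stays inside $\A_2$, and then to verify compatibility with composition; uniqueness will be automatic, since the values on the generators are already fixed. I would isolate two families: the \emph{vertical} morphisms $\hG^{\alpha\B_1}_{\alpha\B}$ (fixed $\alpha$, inclusions of joins $\hG(\alpha,\B_1)\hookrightarrow\hG(\alpha,\B)$ for $\B_1\subseteq\B$ in $\U(\alpha)$), and the \emph{horizontal} morphisms $\hG^{\beta\B}_{\alpha\B}=G^\beta_\alpha|^{\hG(\beta,\B)}_{\hG(\alpha,\B)}$ (fixed $\B$, for $\alpha\geq\beta$ with $\B\in\U(\beta)$).

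The first step is to check that each horizontal morphism is a well-defined isomorphism of $\ih$, which is where the analytic care is needed. Since $G^\beta_\alpha$ is an isometry it is continuous and injective, it carries each $L(\beta,b)=\im G^b_\beta$ onto $L(\alpha,b)=\im G^b_\alpha$ (because $G^\beta_\alpha G^b_\beta=G^b_\alpha$), and, being isometric, it maps the closed subspace $\hG(\beta,\B)=\overline{\sum_{b\in\B}L(\beta,b)}$ onto the closed subspace $\hG(\alpha,\B)=\overline{\sum_{b\in\B}L(\alpha,b)}$; here I use the representation of joins from Corollary \ref{a-h-c:join-lattice-isometry} together with the fact that an isometry sends the closure of a sum onto the closure of the image sum. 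Hence its restriction is a surjective isometry, i.e. an isomorphism of $\ih$, exactly as in Proposition \ref{extension-left} for the single-image coupling $L$.

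Next I would observe that any morphism $(\beta,\B_1)\leq(\alpha,\B)$ of $\A_2$ factors as
\[
(\beta,\B_1)\longrightarrow(\alpha,\B_1)\longrightarrow(\alpha,\B),
\]
a horizontal morphism followed by a vertical one, and that this is the factorization forced on us: the alternative intermediate $(\beta,\B)$ need not lie in $\A_2$ (one has $\B\subseteq\hat\alpha$ but possibly $\B\not\subseteq\hat\beta$), whereas $(\alpha,\B_1)$ always does since $\B_1\subseteq\B\subseteq\hat\alpha$, and $\B_1\subseteq\hat\beta$ makes the horizontal leg legitimate. I therefore set $\hG$ on the general morphism to be $\hG^{\alpha\B_1}_{\alpha\B}\,\hG^{\beta\B_1}_{\alpha\B_1}$; since this value is dictated by functoriality on the generators, any functorial extension coincides with it, giving uniqueness.

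The substance, and the main obstacle, is the single commutation square relating the two families: for $\beta\leq\alpha$ and $\C\subseteq\B_1$ in $\U(\beta)$,
\[
\hG^{\beta\B_1}_{\alpha\B_1}\,\hG^{\beta\C}_{\beta\B_1}=\hG^{\alpha\C}_{\alpha\B_1}\,\hG^{\beta\C}_{\alpha\C}.
\]
Both composites are literally the restriction of the single map $G^\beta_\alpha$ to the domain $\hG(\beta,\C)$ with codomain $\hG(\alpha,\B_1)$ — on the left one first includes $\hG(\beta,\C)$ into $\hG(\beta,\B_1)$ and then applies $G^\beta_\alpha$, on the right one applies $G^\beta_\alpha$ and then includes $\hG(\alpha,\C)$ into $\hG(\alpha,\B_1)$ — so they agree. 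Granting this, functoriality for a composable chain $(\gamma,\C)\leq(\beta,\B_1)\leq(\alpha,\B)$ follows by expanding the two prescribed values into $\hG^{\alpha\B_1}_{\alpha\B}\,\hG^{\beta\B_1}_{\alpha\B_1}\,\hG^{\beta\C}_{\beta\B_1}\,\hG^{\gamma\C}_{\beta\C}$, applying the square to the two middle factors, and then collapsing the outer pairs by vertical composition of inclusions ($\hG^{\alpha\B_1}_{\alpha\B}\hG^{\alpha\C}_{\alpha\B_1}=\hG^{\alpha\C}_{\alpha\B}$) and horizontal composition inherited from $G^\gamma_\alpha=G^\beta_\alpha G^\gamma_\beta$ ($\hG^{\beta\C}_{\alpha\C}\hG^{\gamma\C}_{\beta\C}=\hG^{\gamma\C}_{\alpha\C}$); the result is $\hG^{\alpha\C}_{\alpha\B}\,\hG^{\gamma\C}_{\alpha\C}$, precisely the value assigned to the composite $(\gamma,\C)\leq(\alpha,\B)$. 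This shows $\hG$ is a functor $\A_2\to\ih$, and uniqueness was noted above.
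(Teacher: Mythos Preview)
Your proof is correct and follows essentially the same route as the paper: the paper packages the functoriality verification into Lemma~\ref{extension-functor-bigger-poset} (whose three hypotheses are precisely your vertical composition, horizontal composition, and commutation square), while you spell the argument out directly. Your insistence on the horizontal-then-vertical factorization through $(\alpha,\B_1)$ is the careful choice, since this intermediate always lies in $\A_2$; your verification that $\hG^{\beta\B}_{\alpha\B}$ is a surjective isometry matches the paper's remark that it is the restriction of $G^\beta_\alpha$ and ``is even an isomorphism''.
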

%\vspace{0.5cm}

\begin{lem}\label{extension-functor-bigger-poset}
Let $\A$ be a poset, $\cat{C}$ be any category; let $M_1=\{\left((\alpha,\B),(\alpha,\B_1)\right): (\alpha,\B),(\alpha,\B_1)\in \A_2 \text{ and } \B\geq \B_1\}$, $M_2=\{\left((\alpha,\B),(\beta,\B)\right): (\alpha,\B),(\beta,\B)\in \A_2 \text{ and } \alpha\geq \beta\}$, and let $(G^{i}_{j}; i,j\in M_1\cup M_2: i\leq j)$ be such that for any $(\alpha,\B),(\alpha,\B_1),(\alpha,\B_2)\in \A_1$ such that $(\alpha,\B)\geq (\alpha,\B_1)\geq (\alpha,\B_2)$,
 
\begin{equation} 
G^{\alpha \B_1}_{\alpha \B}G^{\alpha \B_2}_{\alpha \B_1}= G^{\alpha \B_2}_{\alpha \B_1}
\end{equation}
 
 for any $(\alpha,\B),(\beta,\B),(\gamma,\B)$ such that $(\alpha,\B)\geq (\beta,\B)\geq (\gamma,\B)$,
 
\begin{equation}
 G^{\beta \B}_{\alpha \B} G^{\gamma \B}_{\beta \B}= G^{\gamma \B}_{\alpha \B}
\end{equation}

and for any $(\alpha, \B),(\alpha, \B_1),(\beta, \B),(\beta, \B_1)$ such that $(\alpha, \B)\geq (\alpha, \B_1)\geq (\beta, \B_1)$ and $(\alpha, \B)\geq (\beta, \B)\geq (\beta, \B_1)$, i.e $\hat{\alpha}\geq \hat{\beta} \geq \B\geq \B_1$,

\begin{equation}
G^{\beta \B}_{\alpha \B} G^{\beta \B_1}_{\beta \B}= G^{\alpha \B_1}_{\alpha \B}G^{\beta \B_1}_{\alpha \B_1}
\end{equation}

Then $G$ extends into a unique functor $G_1:\A_2\to \cat{C}$, which shall also be denote as $G$.

\end{lem}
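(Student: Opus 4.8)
The plan is to exploit the product structure of $\A_2\subseteq \A\times\U(\A)$. Every relation $(\beta,\B_1)\leq(\alpha,\B)$ of $\A_2$ (equivalently $\beta\leq\alpha$ and $\B_1\subseteq\B$) factors as a morphism of $M_2$ followed by a morphism of $M_1$, namely as $(\beta,\B_1)\to(\alpha,\B_1)\to(\alpha,\B)$, the intermediate object $(\alpha,\B_1)$ again lying in $\A_2$ because $\B_1\subseteq\B\subseteq\hat\alpha$; it factors equally through $(\beta,\B)$ as a morphism of $M_1$ followed by one of $M_2$. Hence $M_1\cup M_2$ generates $\A_2$ as a category, so any functor extending $G$ is forced on every relation, giving uniqueness at once. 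For existence I set
\[
G_1((\beta,\B_1)\leq(\alpha,\B)):=G^{\alpha \B_1}_{\alpha \B}\,G^{\beta \B_1}_{\alpha \B_1},
\]
and the third hypothesis (the interchange relation) guarantees that the alternative factorization through $(\beta,\B)$ yields the same morphism, so the definition is unambiguous. Taking as part of the data that the degenerate generating relations are sent to identities — automatic in all our applications, where $G$ arises by restriction of an honest functor — this reduces to the prescribed value on $M_1$ and on $M_2$ and returns the identity on identity relations, so $G_1$ extends $G$ and preserves identities.

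The only substantive point is functoriality with respect to composition. Given a chain $(\gamma,\B_2)\leq(\beta,\B_1)\leq(\alpha,\B)$ in $\A_2$, that is $\gamma\leq\beta\leq\alpha$ and $\B_2\subseteq\B_1\subseteq\B$, I must verify
\[
G_1((\gamma,\B_2)\leq(\alpha,\B))=G_1((\beta,\B_1)\leq(\alpha,\B))\,G_1((\gamma,\B_2)\leq(\beta,\B_1)).
\]
Expanding the right-hand side by the definition produces the four-fold product
\[
G^{\alpha \B_1}_{\alpha \B}\,G^{\beta \B_1}_{\alpha \B_1}\,G^{\beta \B_2}_{\beta \B_1}\,G^{\gamma \B_2}_{\beta \B_2}.
\]
The heart of the argument is to apply the interchange relation to the middle pair $G^{\beta \B_1}_{\alpha \B_1}G^{\beta \B_2}_{\beta \B_1}$ — a vertical edge inside $\beta$ followed by a horizontal edge at level $\B_1$ — rewriting it as the horizontal-then-vertical composite $G^{\alpha \B_2}_{\alpha \B_1}G^{\beta \B_2}_{\alpha \B_2}$. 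After this single commutation the product regroups into two adjacent vertical edges $G^{\alpha \B_1}_{\alpha \B}G^{\alpha \B_2}_{\alpha \B_1}$ and two adjacent horizontal edges $G^{\beta \B_2}_{\alpha \B_2}G^{\gamma \B_2}_{\beta \B_2}$; collapsing these by the first (vertical) and second (horizontal) composition laws respectively yields $G^{\alpha \B_2}_{\alpha \B}\,G^{\gamma \B_2}_{\alpha \B_2}$, which is exactly $G_1((\gamma,\B_2)\leq(\alpha,\B))$.

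I expect the main obstacle to be purely bookkeeping rather than conceptual: one must check that the intermediate index pairs $(\alpha,\B_1),(\alpha,\B_2),(\beta,\B_2)$ genuinely lie in $\A_2$, so that every edge invoked above is a legitimate generating morphism, and that the hypothesis $\hat\alpha\geq\hat\beta\geq\B_1\geq\B_2$ of the interchange relation holds at the one place it is used. All of these follow from $\gamma\leq\beta\leq\alpha$, $\B_2\subseteq\B_1\subseteq\B$ and $\B\subseteq\hat\alpha$, so no genuine difficulty arises, and each of the three hypotheses is used exactly once. This establishes that $G_1$ is a well-defined functor $\A_2\to\cat{C}$ restricting to $G$, with uniqueness already noted; this is precisely the extension principle invoked in Proposition \ref{d-h-2:extension-2}.
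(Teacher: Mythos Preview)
Your argument is correct and is essentially the standard ``generators-and-relations'' extension that the paper invokes by citing Proposition~4.8 of \cite{GS3}; you have simply written it out in full, with the single interchange followed by the two collapses being exactly the computation that reference carries out. One small caveat: your remark that the relation also factors through $(\beta,\B)$ is only valid when $\B\subseteq\hat\beta$, which need not hold in general; but since you define $G_1$ via the always-available factorization through $(\alpha,\B_1)$ and only use interchange in the functoriality step (where $\hat\beta\supseteq\B_1\supseteq\B_2$ does hold), this does not affect the proof. Your observation that the lemma tacitly assumes the degenerate generators are sent to identities is well taken and worth making explicit.
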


\begin{proof}
The proof is essentially rewriting the proof of Proposition 4.8 \cite{GS3} replacing $c$ by $\B_2$, $b$ by $\B_1$ and $a$ by $\B$.
\end{proof}

Proof of Proposition \ref{d-h-2:extension-2}.
\begin{proof}

By Lemma \ref{extension-functor-bigger-poset} $\hG$ is a functor and for any $(\beta, \B_1)$, $(\alpha, \B)\in \A_2$ such that $(\beta, \B_1)\leq (\alpha, \B)$, $\hG^{\beta \B_1}_{\beta \B}$ is an inclusion therefore an isometry and $\hG^{\beta \B}_{\alpha \B}$ is a restriction of $G^\beta_\alpha$, which is an isometry, on its domain and codomain (it is even an isomorphism); therefore $\hG^{\beta \B_1}_{\alpha \B}$.

\end{proof}
%\vspace{0.5cm}

\begin{rem}
If one notes $j:\A\to \A_1$ the increasing function such that $j(a)=(a,a)$ and $k:\A\to \A_2$ the increasing function such that $k(a)=(a,\hat{a})$ then 
\begin{equation}
Lk=\hG j\cong G
\end{equation} 
\end{rem}

\subsection{Intersection property}\label{Intersection property}

\begin{rem}
Let $\A$ be any poset and $G$ be a functor from $\A$ to $\ih$. Let $R$ be defined as for any $(\alpha, a),(\beta,b)\in \A_1$ such that $(\alpha,a)\geq (\beta,b)$,

\begin{equation}
R^{\alpha a}_{\beta b}= {L^{\beta b}_{\alpha a} }^{\dagger}
\end{equation}

Then $F$ is a presheaf.\\

Similarly let $\hF$ be defined as for any $(\alpha,\B),(\beta,\B_1)\in \A_2$ such that $(\alpha,\B)\geq (\beta, \B_1)$, 

\begin{equation}
\hF^{\alpha\B}_{\beta \B_1}=\hG^{\alpha \B}_{\beta \B_1}{}^\dagger
\end{equation}
Then $\hF$ is a presheaf.

\end{rem}

\begin{defn}[Intersection property]
Let $\A$ be any poset and $G$ be a functor from $\A$ to $\ih$. For any $(\alpha,\B) \in \A_2$ let us note $\pi^{\alpha}(\B)$ the orthogonal projection of $G(\alpha)$ onto $\hat{G}(\alpha, \B)$.\\

We shall say that $G$ satisfies the intersection property if for any 

\begin{equation}\tag{I'}
\forall (\alpha,a),(\alpha,b)\in \A_1,\quad \pi^{\alpha}(\hat{a}\cap \hat{b})=\pi^{\alpha}(\hat{a})\pi^{\alpha}(\hat{b})
\end{equation}

\end{defn}
%\vspace{0.5cm}

\begin{prop}
Let $G$ be a functor from a poset $\A$ to $\ih$, $G$ satisfies the intersection property if and only if

\begin{equation}
\forall \alpha,a,b\in \A \text{ s.t. } a\leq \alpha,b\leq \alpha,\quad   G^a_\alpha {G^a_\alpha}^{\dagger}G^b_\alpha{G^b_\alpha}^{\dagger}= \hG^{\alpha \hat{a}\cap\hat{b}}_{\alpha \hat{\alpha}} \hG^{\alpha \hat{a}\cap\hat{b}}_{\alpha \hat{\alpha}}{}^{\dagger}
\end{equation}

\end{prop}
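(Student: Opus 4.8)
The plan is to observe that, once the definitions are unwound, the two conditions in the statement are literally the same operator identity written in two different notations, so the proof reduces to identifying each projection $\pi^{\alpha}(\cdot)$ appearing in (I') with the corresponding ``isometry times its adjoint''. The quantifier in (I') ranges over all $(\alpha,a),(\alpha,b)\in\A_1$, i.e. over all $\alpha$ and all $a,b\le\alpha$, which is exactly the range of the displayed equation; hence it suffices to prove, for each fixed such triple, that the displayed equality and the equality $\pi^{\alpha}(\hat a\cap\hat b)=\pi^{\alpha}(\hat a)\pi^{\alpha}(\hat b)$ coincide termwise, after which both implications follow at once.

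First I would identify the atomic projections $\pi^{\alpha}(\hat a)$ and $\pi^{\alpha}(\hat b)$. For fixed $\alpha$ and $a\le\alpha$, functoriality gives $G^c_\alpha=G^a_\alpha G^c_a$ for every $c\le a$, so $\im G^c_\alpha\subseteq\im G^a_\alpha$; since $a\in\hat a$ and $\im G^a_\alpha$ is closed (an isometry between Hilbert spaces has closed image), the representative $\overline{\sum_{c\le a}\im G^c_\alpha}$ of $\hG(\alpha,\hat a)$ collapses to $\im G^a_\alpha$. Thus the inclusion isometry of $\hG(\alpha,\hat a)$ into $G(\alpha)$ is $G^a_\alpha$ itself, and invoking the fact that $\phi\phi^\dagger$ is the orthogonal projection onto $\im\phi$ for any isometry $\phi$ (a self-adjoint idempotent with image $\im\phi$, since $\phi^\dagger\phi=\id$), I obtain $\pi^{\alpha}(\hat a)=G^a_\alpha{G^a_\alpha}^\dagger$ and likewise $\pi^{\alpha}(\hat b)=G^b_\alpha{G^b_\alpha}^\dagger$.

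Next I would identify the intersection projection. Since $a,b\le\alpha$ we have $\hat a\cap\hat b\subseteq\hat\alpha$, and $\hG(\alpha,\hat\alpha)=G(\alpha)$ because the join contains $\im G^\alpha_\alpha=G(\alpha)$; therefore $\hG^{\alpha\,\hat a\cap\hat b}_{\alpha\,\hat\alpha}$ is precisely the inclusion isometry of $\hG(\alpha,\hat a\cap\hat b)$ into $G(\alpha)$. Applying the same $\phi\mapsto\phi\phi^\dagger$ fact yields $\hG^{\alpha\,\hat a\cap\hat b}_{\alpha\,\hat\alpha}{\hG^{\alpha\,\hat a\cap\hat b}_{\alpha\,\hat\alpha}}^\dagger=\pi^{\alpha}(\hat a\cap\hat b)$, which is exactly the right-hand side of the displayed equation.

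Substituting these identifications, the displayed equation becomes word for word $\pi^{\alpha}(\hat a\cap\hat b)=\pi^{\alpha}(\hat a)\pi^{\alpha}(\hat b)$, which is (I') for the triple $(\alpha,a,b)$; running the substitution in either direction yields both implications. I expect no genuine analytic obstacle here, as the content is a translation between the projection notation of the intersection property and the isometry notation $G^a_\alpha{G^a_\alpha}^\dagger$. The one step requiring a little care is the collapse $\hG(\alpha,\hat a)=\im G^a_\alpha$: one must check that adjoining the lower images $\im G^c_\alpha$ with $c<a$ does not enlarge the join and that these images are closed, so that the join is genuinely $\im G^a_\alpha$ and not merely its closure; once that is granted the remainder is bookkeeping.
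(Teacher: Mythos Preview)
Your proposal is correct and follows essentially the same route as the paper: both arguments reduce to the identification $\pi^{\alpha}(\hat a)=G^a_\alpha{G^a_\alpha}^{\dagger}$ via the factorisation of $G^a_\alpha$ through the inclusion $L(\alpha,a)\hookrightarrow G(\alpha)$ and the fact that the corestriction $G(a)\to L(\alpha,a)$ is a unitary isomorphism, together with the observation that $\hG^{\alpha\,\hat a\cap\hat b}_{\alpha\,\hat\alpha}\,{\hG^{\alpha\,\hat a\cap\hat b}_{\alpha\,\hat\alpha}}^{\dagger}$ is by definition $\pi^{\alpha}(\hat a\cap\hat b)$. Your write-up is in fact more explicit than the paper's on the collapse $\hG(\alpha,\hat a)=\operatorname{im}G^a_\alpha$ and on the treatment of the right-hand side, points the paper's proof leaves implicit (and where it contains apparent typos).
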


\begin{proof}

For any $(\alpha, b)\in \A_1$, $\phi_b:G(b)\to G(\alpha,b)$ is an isomorphism of $\ih$ and $G^{b}_{\alpha}=\phi_{\alpha}G^{\alpha b}_{\alpha \alpha} \phi_b^{-1}$; as $G^{\alpha b}_{\alpha \alpha}$ is an inclusion, the orthogonal projection $\pi^{\alpha}(b)= {G^{\alpha b}_{\alpha \alpha}}^{\dagger}$. One then has that,

\begin{equation}
G^b_\alpha{G^b_\alpha}^{\dagger}=\phi_{\alpha} G^b_\alpha  \phi_b^{-1}\phi_b^\dagger {G^b_\alpha}^\dagger{\phi_\alpha^{-1}}^\dagger
\end{equation}

and as $\phi_b$ is an isometry and $\phi_{\alpha}=id$,

\begin{equation}
G^b_\alpha{G^b_\alpha}^{\dagger}=\ G^b_\alpha  {G^b_\alpha}^\dagger
\end{equation}

which ends the proof.

\end{proof}
%\vspace{0.5cm}

\begin{prop}
Let $\A$ be any poset and $G:\A\to \ih$ be a functor from $\A$ to $\ih$; let us assume that for some $\alpha \in \A$, and for any $a,b\in \A$ such that $a\leq \alpha $ and $b\leq \alpha$,
\begin{equation}
\pi^{\alpha}(\hat{a}\cap\hat{b})=\pi^\alpha(\hat{a})\pi^\alpha(\hat{b})
\end{equation}
then for any $\alpha_1\leq \alpha$, and any $a,b\leq \alpha_1$ one has that,

\begin{equation}
\pi^{\alpha_1}(\hat{a}\cap\hat{b})=\pi^{\alpha_1}(\hat{a})\pi^{\alpha_1}(\hat{b})
\end{equation}
\end{prop}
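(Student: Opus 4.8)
The plan is to transport the hypothesis from $\alpha$ down to $\alpha_1$ along the isometry $\iota := G^{\alpha_1}_{\alpha}\colon G(\alpha_1)\to G(\alpha)$, whose image is the Hilbert subspace $L(\alpha,\alpha_1)=\im G^{\alpha_1}_\alpha$ of $G(\alpha)$. First I would record the two basic facts about $\iota$: since it is an isometry, $\iota^\dagger\iota=\id_{G(\alpha_1)}$, whereas $Q:=\iota\iota^\dagger$ is the orthogonal projection of $G(\alpha)$ onto $L(\alpha,\alpha_1)$.

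The heart of the argument is a transport identity for the projections $\pi^{\alpha_1}$ and $\pi^{\alpha}$. For any lower set $\B\in\U(\alpha_1)$ (so $\B\subseteq\hat{\alpha_1}\subseteq\hat{\alpha}$, and both $\pi^{\alpha_1}(\B)$ and $\pi^{\alpha}(\B)$ are defined) I claim
\begin{equation}
\pi^{\alpha}(\B)=\iota\,\pi^{\alpha_1}(\B)\,\iota^\dagger,\qquad \pi^{\alpha_1}(\B)=\iota^\dagger\,\pi^{\alpha}(\B)\,\iota .
\end{equation}
To prove the first identity, note that for $a\in\B$ one has $a\leq\alpha_1$ and $G^a_\alpha=\iota\,G^a_{\alpha_1}$ by functoriality, hence $L(\alpha,a)=\iota\big(L(\alpha_1,a)\big)$; since $\iota$ is a continuous isometry onto a closed subspace it preserves closures of sums (cf. Lemma \ref{d-h:lemma-double-sum}), so $\iota\big(\hG(\alpha_1,\B)\big)=\overline{\sum_{a\in\B}L(\alpha,a)}=\hG(\alpha,\B)$. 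Then $\iota\,\pi^{\alpha_1}(\B)\,\iota^\dagger$ is self-adjoint, is idempotent because $\iota^\dagger\iota=\id$, and has image $\iota\big(\hG(\alpha_1,\B)\big)=\hG(\alpha,\B)$; an orthogonal projection being determined by its image, it equals $\pi^{\alpha}(\B)$. The second identity follows by conjugating with $\iota^\dagger(\cdot)\iota$ and using $\iota^\dagger\iota=\id$.

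With the transport identity in hand the proposition is a short computation. Fix $a,b\leq\alpha_1$, so that $\hat a,\hat b,\hat a\cap\hat b$ all lie in $\U(\alpha_1)$. Applying the second identity twice,
\begin{equation}
\pi^{\alpha_1}(\hat a)\,\pi^{\alpha_1}(\hat b)=\iota^\dagger\pi^{\alpha}(\hat a)\,\iota\iota^\dagger\,\pi^{\alpha}(\hat b)\,\iota=\iota^\dagger\,\pi^{\alpha}(\hat a)\,Q\,\pi^{\alpha}(\hat b)\,\iota .
\end{equation}
Because $b\leq\alpha_1$, the image $\hG(\alpha,\hat b)$ of $\pi^{\alpha}(\hat b)$ is contained in $L(\alpha,\alpha_1)=\im Q$, whence $Q\,\pi^{\alpha}(\hat b)=\pi^{\alpha}(\hat b)$ and the middle factor $Q$ is absorbed. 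Invoking the hypothesis $\pi^{\alpha}(\hat a)\pi^{\alpha}(\hat b)=\pi^{\alpha}(\hat a\cap\hat b)$ and then the transport identity once more yields
\begin{equation}
\pi^{\alpha_1}(\hat a)\,\pi^{\alpha_1}(\hat b)=\iota^\dagger\,\pi^{\alpha}(\hat a\cap\hat b)\,\iota=\pi^{\alpha_1}(\hat a\cap\hat b),
\end{equation}
which is the desired conclusion.

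I expect the main obstacle to be the transport identity itself: one must verify carefully that $\iota$ carries the join $\hG(\alpha_1,\B)$ onto $\hG(\alpha,\B)$ (this is exactly where one uses that an isometry with closed image commutes with taking the closure of a sum of subspaces), and that conjugating an orthogonal projection by an isometry produces the orthogonal projection onto the image subspace. Once these two points are secured, the rest — in particular the absorption of $Q$ using $\hG(\alpha,\hat b)\subseteq L(\alpha,\alpha_1)$ — is purely formal.
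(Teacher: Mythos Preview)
The paper states this proposition without proof, so there is no argument to compare against directly. Your proof is correct and is the natural one. The transport identity $\pi^{\alpha}(\B)=\iota\,\pi^{\alpha_1}(\B)\,\iota^{\dagger}$ is essentially a restatement, in terms of projections, of what the paper records as the functoriality of $\hat G$ in the first variable: the map $\hat G^{\alpha_1\B}_{\alpha\B}$ is the restriction of $G^{\alpha_1}_{\alpha}$ and is an isometric isomorphism onto $\hat G(\alpha,\B)$, which is exactly your claim $\iota\big(\hat G(\alpha_1,\B)\big)=\hat G(\alpha,\B)$. The absorption step $Q\,\pi^{\alpha}(\hat b)=\pi^{\alpha}(\hat b)$ is just the inclusion $L(\alpha,b)\subseteq L(\alpha,\alpha_1)$, and the passage from closures in $G(\alpha_1)$ to closures in $G(\alpha)$ is valid because an isometry with closed image is a homeomorphism onto that image. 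Nothing is missing.
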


%\vspace{0.5cm}

\subsection{Predecomposition and natural transformations}\label{d-h-2:Decomposition natural transformations}

\begin{defn}
Let $\A$ be a poset and $G$ be a functor from $\A$ to $\ih$; for any $(\alpha,a) \in \A_1$, when $c\leq a$, let,

\begin{equation}
S_c(\alpha,a)=L(\alpha,c)\cap \bigcap_{d\lneqq c}L(\alpha,d)^\perp 
\end{equation}

otherwise

\begin{equation}
S_c(\alpha,a)=0
\end{equation}

Futhermore let $(\alpha, a),(\beta,b)\in \A_1$ be such that $(\alpha,a)\geq (\beta,b)$, for any $v\in S_c(\beta,b)$, let

\begin{equation}
{S_c}^{\beta b}_{\alpha a}(v)= G^{\beta}_{\alpha}(v)
\end{equation}

This reduces to  ${S_c}^{\beta b}_{\alpha a}= 0$ when $c\not\leq b$.\\

Let for any $(\alpha,a)\in \A_1$, $s_c(\alpha,a):L(\alpha,a)\to S_c(\alpha,a)$ be the orhogonal projection on $S_c(\alpha,a)$. Furthermore we shall denote $V_c$ the restriction of $S_c$ to $\A$, i.e. for any $a\in \A$,

\begin{equation}
V_c(a)= S_c(a,a)
\end{equation}

and for $b\in \A$ such that $b\leq a$,

\begin{equation}
{V_c}^b_a= {S_c}^{bb}_{aa}
\end{equation}

\end{defn}
\begin{proof}
Let $\alpha,\beta,a\in \A$ such that $\alpha \geq \beta \geq a$, let $b\in \A$ such that $b\lneqq a$, let $v\in S_b(\beta,a)$, let $w\in L(\alpha,b) $ then as $L^{\beta b}_{\alpha b}$ is an isomorphism of $\ih$ (Proposition \ref{d-h-2:G-isomorphism}) one has that there is $w_1\in L(\beta,b)$ such that $w=G^\beta_\alpha(v)$ and,

\begin{equation}
\langle G^\beta_\alpha(v), w\rangle = \langle G^\beta_\alpha(v),G^\beta_\alpha(w_1)\rangle= \langle s_a(v), w_1\rangle=0
\end{equation}

This shows that ${S_c}^{ \beta a}_{\alpha a}$ is well defined and therefore the $S_c$ is well defined.

\end{proof}
%\vspace{0.5cm}

The collection $(V_a,a\in \A)$ is always well defined even when the functor is not decomposable, we shall call it the predecomposition of $G$ as it is analogous to predecompositions for functors from $\A$ to $\Vect$ introduced in \cite{GS2}; they are the bits from which one can build a decomposition of $G$ when $G$ is decomposable.

\begin{prop}\label{decomposition-hilb:projection-commutation}
Let $\A$ be any poset and $G:\A\to \ih$ be a functor, then for any $c\in \A$, $S_c$ is a functor from $\A_1$ to $\ih$. Furthermore for any $c\in \A$, $s_c:L\to S_c$ is a natural transformation of funtors of $\h$, i.e. for any $(\alpha,a),(\beta,b)\in \A_1$ such that $(\alpha,a)\geq (\beta,b)$,

\begin{equation}
{S_c}^{\beta b}_{\alpha a} s_c(\beta, b)= s_c(\alpha, a) L^{\beta b}_{\alpha a}
\end{equation}
\end{prop}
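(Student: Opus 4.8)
The plan is to verify the two assertions of Proposition \ref{decomposition-hilb:projection-commutation} separately: first that each $S_c$ is a functor from $\A_1$ to $\ih$, and second that $s_c$ is a natural transformation $L\to S_c$. For the functoriality of $S_c$, I would start by checking that for $(\alpha,a)\geq(\beta,b)$ in $\A_1$ the map ${S_c}^{\beta b}_{\alpha a}$ actually lands in $S_c(\alpha,a)$; this is exactly the well-definedness computation already carried out in the displayed \texttt{proof} block just above the statement, where one uses that $L^{\beta c}_{\alpha c}$ is an isomorphism of $\ih$ (Proposition \ref{d-h-2:G-isomorphism}) to show $G^\beta_\alpha$ sends $S_c(\beta,b)$ into $L(\alpha,c)$ and orthogonally to each $L(\alpha,d)$ for $d\lneqq c$. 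Once the target is correct, the composition and identity laws for $S_c$ are inherited immediately from those of $G$, since ${S_c}^{\beta b}_{\alpha a}$ is defined as the restriction of $G^\beta_\alpha$; and because $G^\beta_\alpha$ is an isometry, its restriction to a closed subspace is again an isometry, so $S_c$ indeed lands in $\ih$ rather than merely in $\h$.

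The heart of the proposition is the naturality square
\begin{equation}
{S_c}^{\beta b}_{\alpha a}\, s_c(\beta,b)= s_c(\alpha,a)\, L^{\beta b}_{\alpha a}.
\end{equation}
Here I would argue that both sides, applied to $v\in L(\beta,b)$, equal the orthogonal projection of $G^\beta_\alpha(v)$ onto $S_c(\alpha,a)$. The key geometric input is that $G^\beta_\alpha$, being an isometry whose restriction $L^{\beta c}_{\alpha c}$ to the relevant nested subspaces is an isomorphism of $\ih$, intertwines the orthogonal decomposition of $L(\beta,b)$ into the $S_c(\beta,\cdot)$ with that of $L(\alpha,a)$. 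Concretely, I would decompose $v=s_c(\beta,b)(v)+w$ where $w$ is orthogonal to $S_c(\beta,b)$, apply $G^\beta_\alpha$, and check that $G^\beta_\alpha(s_c(\beta,b)(v))\in S_c(\alpha,a)$ (which is the functoriality just established) while $G^\beta_\alpha(w)$ is orthogonal to $S_c(\alpha,a)$.

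The main obstacle, and the step deserving the most care, is establishing that $G^\beta_\alpha$ maps the orthogonal complement piece $w$ into the orthogonal complement of $S_c(\alpha,a)$. This is where the isometry property of $G$ must be used in both directions: isometries preserve inner products, so $\langle G^\beta_\alpha(w), G^\beta_\alpha(u)\rangle=\langle w,u\rangle$ for $u$ in the source, but one needs to control inner products of $G^\beta_\alpha(w)$ against elements of $S_c(\alpha,a)$ that need not lie in the image of $G^\beta_\alpha$. I would handle this by writing any element of $S_c(\alpha,a)\subseteq L(\alpha,c)$ through the isomorphism $L^{\beta c}_{\alpha c}=G^\beta_\alpha|^{L(\alpha,c)}_{L(\beta,c)}$ as $G^\beta_\alpha$ applied to a unique element of $L(\beta,c)$, reducing the orthogonality check on the $\alpha$ side back to the already-known orthogonality on the $\beta$ side. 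Once this transfer of orthogonality across the isometry is in place, the naturality identity follows by uniqueness of the orthogonal decomposition.
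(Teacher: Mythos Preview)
Your proposal is correct and close in spirit to the paper's argument: both rely on the fact that $L^{\beta c}_{\alpha c}$ is an isometric isomorphism (Proposition \ref{d-h-2:G-isomorphism}) to transport the orthogonal structure from the $\beta$-level to the $\alpha$-level. The structural difference is that the paper factors a general arrow $(\beta,b)\leq(\alpha,a)$ into a ``change first coordinate'' step and a ``change second coordinate'' step, proving naturality for each separately via the identity $s_c(\beta,a)={L^{\beta a}_{\alpha a}}^{\dagger}\, s_c(\alpha,a)\, L^{\beta a}_{\alpha a}$, whereas you attack the general square directly by orthogonally decomposing $v$ in the source and pushing the pieces forward. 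The paper's factoring avoids having to think about both coordinates at once; your direct approach is equally valid and arguably more conceptual, since it makes explicit that an isometry carrying the nested family $(L(\beta,d))_{d\leq c}$ onto $(L(\alpha,d))_{d\leq c}$ must intertwine the associated orthogonal projections.

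One point you should make explicit: when you pull an element $z\in S_c(\alpha,a)$ back through $L^{\beta c}_{\alpha c}$ to some $u\in L(\beta,c)$, you need $u\in S_c(\beta,b)$, not merely $u\in L(\beta,c)$, in order to invoke $w\perp S_c(\beta,b)$. This is the ``inverse'' of the well-definedness computation: for $d\lneqq c$ and $x\in L(\beta,d)$ one has $\langle u,x\rangle=\langle G^{\beta}_{\alpha}(u),G^{\beta}_{\alpha}(x)\rangle=\langle z,G^{\beta}_{\alpha}(x)\rangle=0$ since $G^{\beta}_{\alpha}(x)\in L(\alpha,d)$ and $z\perp L(\alpha,d)$. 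The paper's proof uses this same fact implicitly when it asserts that ${L^{\beta a}_{\alpha a}}^{\dagger}s_c(\alpha,a)L^{\beta a}_{\alpha a}$ lands in $S_c(\beta,a)$, so you are not missing anything the paper supplies, but spelling it out would close your argument cleanly.
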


\begin{proof}

Let $(\alpha, a),(\beta,b)\in \A$ be such that $(\alpha,a)\geq (\beta,b)$, as $G^{\beta}_{\alpha}$ is an isometry so is ${S_c}^{\beta b}_{\alpha a}$; for $(\gamma, g)\in \A_1$ such that $(\gamma,g)\leq (\beta,b)$ and for $v\in S_d(\gamma,g)$ one has that,

\begin{equation}
{S_c}^{\beta b}_{\alpha a}{S_c}^{\gamma g}_{\beta b}(v)= G^\beta_\alpha G^\gamma_\beta(v)= G^\gamma_\alpha(v)
\end{equation}

\begin{equation}
{S_c}^{\beta b}_{\alpha a}{S_c}^{\gamma g}_{\beta b}(v)= {S_c}^{\gamma g}_{\alpha a}
\end{equation}

Let $(\alpha,a),(\beta, a)\in \A_1$ be such that $\alpha\geq \beta$, let $v\in L(\beta,a)$ and $w\in S_c(\beta,a)$; let us recall that ${L^{\beta a}_{\alpha}}^\dagger$ is an isomorphism of $\ih$ (Proposition \ref{d-h-2:G-isomorphism}), therefore,

\begin{equation}
\langle {L^{\beta a}_{\alpha a}}^\dagger s_c(\alpha, a) L^{\beta a}_{\alpha a}(v),w\rangle =\langle  s_c(\alpha, a) L^{\beta a}_{\alpha a}(v), L^{\beta a}_{\alpha a}(w)\rangle 
\end{equation}

As $s_c(\alpha, a)$ is the orthogonal projection on $S_c(\alpha,a)\subseteq L(\alpha,a)$ one has that,

\begin{equation}
\langle {L^{\beta a}_{\alpha a}}^\dagger s_c(\alpha, a) L^{\beta a}_{\alpha a}(v),w\rangle =\langle L^{\beta a}_{\alpha a}(v), L^{\beta a}_{\alpha a}(w)\rangle=\langle v, w\rangle
\end{equation}

Therefore by definition of $s_c(\beta, a)$,

\begin{equation}
s_c(\beta, a)(v)= {L^{\beta a}_{\alpha a}}^\dagger s_c(\alpha, a) L^{\beta a}_{\alpha a}(v)
\end{equation}

and, 

\begin{equation}
{S_c}^{\beta a}_{\alpha a} s_c(\beta, a)=s_c(\alpha, a) L^{\beta a}_{\alpha a}
\end{equation}

Furthermore by definition one has that for $(\alpha,a),(\alpha,b)\in \A_1$ such that $a\geq b$,

\begin{equation}
{S_c}^{\alpha b}_{\alpha a} s_c(\alpha, b)=s_c(\alpha, a) L^{\alpha b}_{\alpha a}
\end{equation}

and so for any $(\alpha,a),(\beta,b)\in \A_1$ such that $(\alpha,a)\geq (\beta,b)$,

\begin{equation}
{S_c}^{\beta b}_{\alpha a} s_c(\beta, b)= {S_c}^{\beta a}_{\alpha a}{S_c}^{\beta b}_{\beta a}s_c(\beta,b)=s_c(\alpha, a)L^{\beta a}_{\alpha a} L^{\beta b}_{\beta a}
\end{equation}

\begin{equation}
{S_c}^{\beta b}_{\alpha a} s_c(\beta, b)= s_c(\alpha, a) L^{\beta b}_{\alpha a}
\end{equation}

\end{proof}
%\vspace{0.5cm}

\subsection{Main Theorem}\label{d-h-2:Main Theorem}

\begin{thm}\label{d-h-2:thm}
Let $\A$ be a well founded poset and $G$ be a functor from $\A$ to $\ih$, $G$ is decomposable if and only if it statifies the intersection property, i.e.

\begin{equation}
\forall \alpha,a,b\in \A \text{ s.t. } a\leq \alpha,b\leq \alpha,\quad   G^a_\alpha {G^a_\alpha}^{\dagger}G^b_\alpha{G^b_\alpha}^{\dagger}= \hG^{\alpha \hat{a}\cap\hat{b}}_{\alpha \hat{\alpha}} \hG^{\alpha \hat{a}\cap\hat{b}}_{\alpha \hat{\alpha}}{}^{\dagger}
\end{equation}

\end{thm}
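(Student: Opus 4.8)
The plan is to prove both implications by transferring, fibre by fibre, the equivalence already established for subspaces (Theorem \ref{d-h:thm}) through the left coupling $L$ and its completion $\hG$. For necessity, suppose $G$ is decomposable. The intersection property (I') is expressed entirely through the subspaces $L(\alpha,a)=\im G^a_\alpha$, the joins $\hG(\alpha,\B)$ and the associated orthogonal projections $\pi^\alpha(\cdot)$, all of which are preserved by isometric isomorphisms of functors; hence (I') is invariant under such isomorphism and I may assume $G=\com\bigoplus_{c\in\A}V_c1[c\leq .]$ for a family $(V_c:\A\to\core\ih)$. Since $G^a_\alpha=\bigoplus_{c\leq a}{V_c}^a_\alpha$ with each ${V_c}^a_\alpha$ an isomorphism, one gets $L(\alpha,a)=\pig_{c\leq a}V_c(\alpha)$, and therefore $\hG(\alpha,\B)=\pig_{c\in\B}V_c(\alpha)$ for every lower set $\B\subseteq\hat\alpha$. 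Writing $p_c^\alpha$ for the orthogonal projection of $G(\alpha)$ onto the mutually orthogonal summand $V_c(\alpha)$, we have $\pi^\alpha(\B)=\sum_{c\in\B}p_c^\alpha$ and $p_c^\alpha p_{c'}^\alpha=\delta_c(c')p_c^\alpha$, whence $\pi^\alpha(\hat a)\pi^\alpha(\hat b)=\sum_{c\in\hat a\cap\hat b}p_c^\alpha=\pi^\alpha(\hat a\cap\hat b)$, which is (I').

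For sufficiency, suppose $G$ satisfies (I'). The crucial observation is that, for each fixed $\alpha$, the restriction of (I') to that $\alpha$ is exactly the subspace intersection property (Definition \ref{d-h:intersection-property}) for the increasing collection $(L(\alpha,a),\,a\in\hat\alpha)$ of Hilbert subspaces of the fixed Hilbert space $G(\alpha)$. As $\hat\alpha$ is a subposet of the well-founded poset $\A$, it is well-founded, so Theorem \ref{d-h:thm} applies and this collection is decomposable; by Proposition \ref{d-h:interaction-subspaces} its decomposition is $S_c(\alpha,\alpha)=L(\alpha,c)\cap\bigcap_{d\lneqq c}L(\alpha,d)^\perp$, yielding
\begin{equation}
G(\alpha)=\pig_{c\leq\alpha}S_c(\alpha,\alpha),\qquad L(\alpha,a)=\pig_{c\leq a}S_c(\alpha,\alpha).
\end{equation}

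I then set $V_c(a)=S_c(a,a)$. That $V_c$ is a functor into $\core\ih$ follows from Proposition \ref{decomposition-hilb:projection-commutation} (which already gives functoriality of $S_c$ into $\ih$, hence that each ${V_c}^b_a={S_c}^{bb}_{aa}$ is an isometry) together with surjectivity $G^b_a(S_c(b,b))=S_c(a,a)$ for $b\leq a$: since $G^b_a$ is an isometry onto $L(a,b)$ carrying $L(b,d)$ isomorphically onto $L(a,d)$ for every $d\leq b$, and since $S_c(a,a)\subseteq L(a,c)\subseteq L(a,b)$, it carries the intersection defining $S_c(b,b)$ onto that defining $S_c(a,a)$. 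To assemble the fibrewise decompositions I take $\phi_a:G(a)\to\com\bigoplus_{c\leq a}V_c(a)$, $\phi_a(v)=(s_c(a,a)(v),\,c\leq a)$, which is an isometric isomorphism precisely by the displayed decomposition in the case $\alpha=a$. Naturality of $(\phi_a)$ is checked for $b\leq a$ and $v\in G(b)$: the components indexed by $c\leq b$ agree by the naturality of $s_c$ proved in Proposition \ref{decomposition-hilb:projection-commutation} (applied to $(a,a)\geq(b,b)$ in $\A_1$), while the components indexed by $c\leq a$ with $c\not\leq b$ vanish on both sides, because $G^b_a(v)\in L(a,b)=\pig_{c'\leq b}S_{c'}(a,a)$ is orthogonal to $S_c(a,a)$. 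Thus $(\phi_a)_{a\in\A}$ is a natural isometric isomorphism $G\cong\com\bigoplus_c V_c1[c\leq .]$, so $G$ is decomposable.

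The main obstacle is exactly this last assembly step. Theorem \ref{d-h:thm} supplies an orthogonal decomposition of each individual space $G(\alpha)$, but what must be verified is that these fibrewise choices cohere into a single functor $(V_c)$ and that the isomorphisms $\phi_\alpha$ are natural in $\alpha$. This coherence is what forces the passage through $\A_1$, the left coupling $L$, its extension $\hG$ to $\A_2$, and above all the naturality of the projections $s_c$: the fibrewise input from Section \ref{chapitre-4-decomposition-collection-subspaces} is the routine part, whereas compatibility of the $S_c(\alpha,\alpha)$ as $\alpha$ varies is the substantive content, and it is crucial that (I') is assumed at every $\alpha$ simultaneously so that the decomposition exists on each fibre.
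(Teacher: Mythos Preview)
Your proof is correct and follows essentially the same route as the paper: apply Theorem~\ref{d-h:thm} fibrewise to the collection $(L(\alpha,a))_{a\leq\alpha}$ inside $G(\alpha)$, use the predecomposition $S_c$ and the naturality of $s_c$ from Proposition~\ref{decomposition-hilb:projection-commutation} to glue the fibres, and take $V_c=S_c|_{\A}$. The paper's argument for surjectivity of ${V_c}^b_a$ goes through the direct sum $\bigoplus_d{S_d}^{bb}_{\alpha b}$ being an isomorphism (deduced from the commutative square supplied by Proposition~\ref{decomposition-hilb:projection-commutation}), whereas you check directly that the isometric isomorphism $G^b_a|^{L(a,b)}$ carries the defining intersection for $S_c(b,b)$ onto that for $S_c(a,a)$; these are two phrasings of the same fact. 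Your treatment of the necessary direction (invariance of (I') under isometric isomorphism of functors, then a direct computation with the orthogonal summands) is the unpacked form of the paper's appeal to Proposition~\ref{d-h-2:thm-lem} together with Theorem~\ref{d-h:thm}, and your explicit verification of naturality of $(\phi_a)$, including the vanishing of the $c\not\leq b$ components, fills in a step the paper leaves implicit.
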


\begin{lem}\label{decomp-hilb:well-founded-sub-poset}
Let $\A$ be a well-founded poset then any subposet of $\A$ is well founded.

\end{lem}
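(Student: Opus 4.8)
The plan is to exploit the fact that well-foundedness, in the sequence-based form given above, is an entirely order-theoretic property that descends to subposets because the order on a subposet is, by definition, nothing but the restriction of the ambient order. So the proof will be essentially immediate, proceeding by contradiction.

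First I would let $\B \subseteq \A$ be an arbitrary subposet and suppose, towards a contradiction, that $\B$ is not well founded. By the definition of well-foundedness, this means there exists an infinite strictly decreasing sequence $(b_n, n\in \N)$ of elements of $\B$, i.e. $b_{n+1} <_{\B} b_n$ for every $n\in \N$. The key (and only) observation is then that the strict order $<_{\B}$ is precisely the restriction to $\B$ of the strict order $<_{\A}$; hence $b_{n+1} <_{\A} b_n$ for all $n$, so $(b_n, n\in \N)$ is also an infinite strictly decreasing sequence in $\A$. This contradicts the hypothesis that $\A$ is well founded. Consequently every strictly decreasing sequence in $\B$ must terminate, which is exactly the statement that $\B$ is well founded.

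There is no genuine obstacle in this argument: the single point that must be checked is that comparability in a subposet agrees with comparability in the ambient poset, which is immediate from the meaning of ``subposet''. I would simply make that identification explicit so that the lifting of the decreasing chain from $\B$ to $\A$ is unambiguous, and the proof is complete.
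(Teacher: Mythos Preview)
Your argument is correct: an infinite strictly decreasing sequence in $\B$ is verbatim an infinite strictly decreasing sequence in $\A$, contradicting well-foundedness of $\A$. The paper does not give its own proof but simply cites Proposition~5.3 of \cite{GS2}, so your explicit contradiction argument is exactly the kind of elementary verification that the cited result presumably contains.
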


\begin{proof}
Proposition 5.3 \cite{GS2}.

\end{proof}
%\vspace{0.5cm}

\begin{proof}

Let $\alpha\in \A$, by Lemma \ref{decomp-hilb:well-founded-sub-poset} and by Theorem \ref{d-h:thm} one has that $\bigoplus_{d\in \A}s_d(\alpha,.): G(\alpha,.)\to \bigoplus_{d\in \A}S_d(\alpha,.)$ is an isomorphims and an isometry. By definition for any $a,b,c\in \A$ such that $\alpha\geq a\geq b\geq c$, ${S_c}^{\alpha b}_{\alpha a}$ is the identity application. Furthermore for any $a\in \A$ such that $a\leq \alpha$, $\phi_a:G(a)\to G(\alpha, a)$ is an isomorphism and therefore by Lemma \ref{decomposition-hilb:projection-commutation} $\bigoplus_{d\in \A}{S_d}^{\alpha a}_{a a}$ is an isomorphism. Then for any $d\in \A$, ${S_d}^{\alpha a}_{aa}$ is an isomorphism and so as,

\begin{equation}
{S_d}^{\alpha \alpha}_{\alpha a}{S_d}^{\alpha a}_{a a}= {S_d}^{\alpha \alpha}_{a a}= {S_d}^\alpha_a
\end{equation}

${S_d}^\alpha_a$ is an isomophism which shows that the intersection property implies decomposable.

The necessary condition is a consequence of Proposition \ref{d-h-2:thm-lem} and Theorem \ref{d-h:thm}.

\end{proof}
%\vspace{0.5cm}

\section{How to relate the interaction decomposition for Hilbert spaces over a finite poset to the interaction decomposition for presheaves?}\label{bootstrap}

In this section we will limit our attention to finite posets $\A$, and to subspaces, $(H_a,a\in \A)$, of a given Hilbert space $H$; we want to give a brief overview of how and when to build a canonical decomposition for the collection of orthogonal projectors $(\pi_a,a\in \A)$. It is a different point of view from the one presented in Section 2 and we shall show that both of the points of view are equivalent when $(H_a,a\in \A)$ is decomposable or $(\pi_a,a\in \A)$ is decomposable, but that if not hey aren't equivalent.

\subsection{How to relate both interaction decompositions}

\begin{prop}\label{c-d:decomposable-implies-projector}
Let $H$ be a Hilbert spaces, and $\A$ a finite poset; if $(H_a\in \vect{H})$ is decomposable then $(\pi_a,a\in \A)$ is decomposable and for any $a\in \A$, 

\begin{equation}
s_a=s_a^\perp
\end{equation}
\end{prop}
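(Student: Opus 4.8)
The plan is to exhibit a single orthogonal family of subspaces that simultaneously witnesses the decomposability of the collection of subspaces $(H_a)$ and of the collection of projectors $(\pi_a)$, and then to use orthogonality to identify the two projectors. First I would invoke decomposability of $(H_a\in\vect{H},a\in\A)$ (Definition \ref{d-h:decomposable}) to obtain the orthogonal decomposition $(S_a,a\in\A^{\Plus})$ with $H=\pig_{a\in\A^{\Plus}}S_a$ and $H_a=\pig_{b\leq a}S_b$, together with the orthogonal projections $s_a^\perp$ onto $S_a=H_a\cap\bigcap_{b\lneqq a}H_b^\perp$ given by Proposition \ref{d-h:interaction-subspaces}. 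Since $\A$ is finite, all direct sums here are finite, and the orthogonal projection onto an orthogonal direct sum is the sum of the projections onto its summands, so $\pi_a=\sum_{b\leq a}s_b^\perp$ for every $a\in\A$.

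The key step is to show that the same family $(S_a,a\in\A^{\Plus})$ is a decomposition of the collection of projectors $(\pi_a,a\in\A)$ in the sense of \cite{GS3}. For a fixed $a\in\A$, the index set $\A^{\Plus}$ is partitioned into $\hat a=\{b\,\mid\, b\leq a\}$ and its complement $\{b\,\mid\, b\not\leq a\}$; applying Proposition \ref{d-h:reorganise-sum} to this partition gives
\begin{equation}
H=\pig_{b\leq a}S_b\;\oplus^\perp\;\pig_{b\not\leq a}S_b=H_a\oplus^\perp\Big(\pig_{b\not\leq a}S_b\Big),
\end{equation}
so that $H_a^\perp=\pig_{b\not\leq a}S_b$. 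Consequently the orthogonal projection $\pi_a$ onto $H_a$ is exactly the projection onto $\bigoplus_{b\leq a}S_b$ parallel to $\bigoplus_{b\not\leq a}S_b$, which is precisely the condition required for $(S_a)$ to be a decomposition of $(\pi_a)$; hence $(\pi_a,a\in\A)$ is decomposable.

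Finally I would identify the canonical projector. By definition the canonical projector $s_a$ attached to this decomposition of $(\pi_a)$ projects onto $S_a$ parallel to $\bigoplus_{b\neq a}S_b$. Applying Proposition \ref{d-h:reorganise-sum} once more, now to the partition of $\A^{\Plus}$ into $\{a\}$ and its complement, yields $S_a^\perp=\pig_{b\neq a}S_b$, so this projection is in fact orthogonal and therefore coincides with $s_a^\perp$. (Equivalently, both $(s_b)_b$ and $(s_b^\perp)_b$ solve the same triangular system $\pi_a=\sum_{b\leq a}x_b$, and on the finite poset $\A^{\Plus}$ Möbius inversion makes the solution unique, recovering $s_a=\sum_{b\leq a}(-1)^{|a\setminus b|}\pi_b=s_a^\perp$ in the factor-space case.) I expect the only real subtlety to be the bookkeeping between $\A$ and $\A^{\Plus}$ and checking that the complement computation $H_a^\perp=\bigoplus_{b\not\leq a}S_b$ genuinely matches the hypothesis of \cite{GS3} on how $\pi_a$ must act; everything else is an immediate consequence of the mutual orthogonality of the $S_b$ and of finiteness.
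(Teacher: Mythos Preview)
Your proposal is correct and in fact contains the paper's proof as a special case: the paper's argument is precisely your parenthetical remark, namely that both $(s_b^\perp)_b$ and $(s_b)_b$ satisfy $\pi_a=\sum_{b\leq a}x_b$, and injectivity of the zeta transform on the finite poset (``$\mu_\A$ is injective'') forces $s_a=s_a^\perp$, from which decomposability of $(\pi_a)$ follows. Your main line of argument is a slightly more geometric variant --- you first exhibit $(S_a)$ directly as a decomposition of $(\pi_a)$ via Proposition~\ref{d-h:reorganise-sum}, and only afterwards identify $s_a$ with $s_a^\perp$ --- but the content is the same and the two routes are interchangeable.
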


\begin{proof}
Let $(H_a,a\in \A)$ be decomposable therefore by definition, for any $a\in \A$,

\begin{equation}
\pi_a=\sum_{b\leq a}s^\perp_a
\end{equation}

One also has by definition that,

\begin{equation}
\pi_a=\sum_{b\leq a}s_a
\end{equation}
As $\mu_\A$ is injective, $s_a=s_a^\perp$ and therefore $(\pi_a,a\in \A)$ is also decomposable.  
\end{proof}

\begin{prop}\label{c-d:projector-decomposable-implies}
Let $\A$ be a finite poset, let $H$ be a Hilbert space, let $(H_a,a\in \A)$ be a collection of Hilbert subspace of $H$. Suppose that $(\pi_a,a\in \A)$ is decomposable, where $\pi_a$ is the orthogonal projection on $H_a$, then for any $a\in \A$,
\begin{equation}
s_a=s_a^\perp
\end{equation}

and $(H_a\in \vect{H},a\in \A)$ is decomposable. 
\end{prop}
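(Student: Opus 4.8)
The plan is to exploit the hypothesis that each $\pi_a$ is an \emph{orthogonal} projection in order to upgrade the a priori oblique decomposition furnished by decomposability of the projector collection into an orthogonal one. Since $(\pi_a,a\in\A)$ is a decomposable collection of projectors (Definition 2.4 \cite{GS3}), there is a family of (not necessarily orthogonal) idempotents $(s_a,a\in\A^{\Plus})$, the projections onto the summands $(S_a,a\in\A^{\Plus})$ of a direct-sum decomposition $H=\bigoplus_{a\in\A^{\Plus}}S_a$, satisfying $s_as_b=\delta_a(b)s_a$ and $\pi_a=\sum_{b\leq a}s_b$ for every $a\in\A$. First I would recover each $s_a$ by Möbius inversion over the finite poset $\A^{\Plus}$, namely
\begin{equation}
s_a=\sum_{b\leq a}\mu_{\A^{\Plus}}(b,a)\,\pi_b .
\end{equation}
Because the Möbius coefficients are integers and each $\pi_b$ is self-adjoint, this writes $s_a$ as a real linear combination of self-adjoint operators, so $s_a$ is self-adjoint; being also idempotent, it is an orthogonal projection, and $S_a=\im s_a$ is a Hilbert subspace of $H$.

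The second step is to read off the decomposition. From $s_as_b=\delta_a(b)s_a$ together with self-adjointness one obtains $S_a\perp S_b$ for $a\neq b$, while $\sum_{a\in\A^{\Plus}}s_a=\id$ and $\pi_a=\sum_{b\leq a}s_b$ give $H=\pig_{a\in\A^{\Plus}}S_a$ and $H_a=\im\pi_a=\pig_{b\leq a}S_b$. This is exactly the assertion that $(S_a,a\in\A^{\Plus})$ is a decomposition of $(H_a,a\in\A)$ in the sense of Definition \ref{d-h:decomposable}, whence $(H_a,a\in\A)$ is decomposable. Finally, applying Proposition \ref{d-h:interaction-subspaces} to this decomposition identifies $S_a=H_a\cap\bigcap_{b\lneqq a}H_b^\perp$; since $s_a^\perp$ is by definition the orthogonal projection onto this same subspace while $s_a$ is the orthogonal projection onto $S_a=\im s_a$, the two coincide, giving $s_a=s_a^\perp$.

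I expect the only genuine subtlety to lie in the self-adjointness argument of the first step: the whole statement turns on the fact that the oblique idempotents produced by bare decomposability of the projector collection can be rewritten, via Möbius inversion, as \emph{integer} combinations of the self-adjoint $\pi_b$, and it is this that forces orthogonality of the resulting decomposition. Finiteness of $\A$ is used precisely here, to guarantee that the zeta transform $(s_a)_a\mapsto(\pi_a)_a$ is invertible over $\A^{\Plus}$; everything after the orthogonal projections $s_a$ are in hand is then a routine transcription of the orthogonal-direct-sum bookkeeping and an appeal to Proposition \ref{d-h:interaction-subspaces}.
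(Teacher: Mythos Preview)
Your argument is correct and rests on the same key step as the paper's: M\"obius-invert $\pi_a=\sum_{b\leq a}s_b$ to write $s_a=\sum_{b\leq a}\mu(b,a)\pi_b$ and then exploit the self-adjointness of the $\pi_b$. You use this more cleanly than the paper does---you conclude immediately that $s_a$ is a self-adjoint idempotent, hence an orthogonal projection, and then read off decomposability and invoke Proposition~\ref{d-h:interaction-subspaces} for uniqueness; the paper instead moves the $\pi_b$ across an inner product against $w\in S_a=\im s_a^\perp$ and uses $\pi_b w=\delta_b(a)w$ to obtain $\langle s_a v,w\rangle=\langle v,w\rangle$, arriving at $s_a=s_a^\perp$ first and decomposability second. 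The substance is the same.
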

\begin{proof}

Let us note $\mu_\A$ as $\mu$; let us recall that by definition of $(S_a,a\in \A)$ for any $a\in \A$, 

\begin{equation}
\pi_as_b^\perp= 1[b\leq a] s_b^\perp
\end{equation}

Let $a\in \A$, let $v\in H$ and $w\in  S_a$,

\begin{equation}
\langle s_a(v),w\rangle= \sum_{b\leq a}\mu(a,b) \langle \pi_b s_a(v), w\rangle
\end{equation}

As for any $b\in \A$, $\pi_b$ is a projector,

\begin{equation}
\langle s_a(v),w\rangle=\sum_{b\leq a}\mu(a,b) \langle s_a(v), \pi_b w\rangle =\sum_{b\leq a}\mu(a,b)\delta_b(a) \langle s_a(v), \pi_b w
\end{equation}

and so as $\mu(a,a)=1$ for any $a\in \A$,

\begin{equation}
\langle v,w\rangle= \langle s_a(v),w\rangle
\end{equation}

Therefore $s_a=s_a^\perp$ which implies that $(H_a\in \vect{H},a\in \A)$ is decomposable.
\end{proof}
%\vspace{0.5cm}

\begin{rem}
Proposition \ref{c-d:projector-decomposable-implies} implies that in this case for any $a\in \A$, $\im s_a$ are Hilbert spaces.
\end{rem}

The previous propositions (Propositions \ref{c-d:decomposable-implies-projector},\ref{c-d:projector-decomposable-implies}) and Theorem \ref{d-h:thm} and Theorem 3.1 \cite{GS3} show that for subspace of a given Hilbert space it is equivalent to prove that the collection of orthogonal projections $(\pi_a,a\in\A)$ is decomposable or that $(H_a,a\in \A)$ is decomposable; therefore one can use the intersection property for collection of projector or for collections of Hilbert spaces to verify if the presheaf given by the projectors is decomposable. Furthermore when $(\pi_a,a\in \A)$ is decomposable for any $\B\in \U(\A)$, 

\begin{equation}
\pi(\B)=\underset{b\in \B}{\sum}s_b=\underset{b\in \B}{\sum}s_b^{\perp}
\end{equation}

However if $(H_a,a\in \A)$ is not decomposable, $(s_a^\perp,a\in \A)$ and $(s_a,a\in \A)$ are usually different from one another. Indeed let $\A=\{0,0^{'},2\}$ such that $0\leq 1$, $0^{'}\leq 2$; let $H_1=H=\R\bigoplus \R$ with for any $(\lambda_1,\lambda_2),(\mu_1,\mu_2)\in H$, 

\begin{equation}
\langle(\lambda_1,\lambda_2),(\mu_1,\mu_2)\rangle=\underset{i=0,1}{\sum}\lambda_i\mu_i
\end{equation}

Let $H_0=\R\bigoplus 0$, let $H_{0^{'}}=\R e_1\oplus e_2$. One remarks that by construction,

\begin{equation}
\underset{a\in \A}{\sum}s_a=\id
\end{equation} 
but that,
\begin{equation}
\underset{a\in \A}{\sum}s_a^\perp= s_0^\perp+s_{0^{'}}^\perp\neq \id
\end{equation}

as $H_0$ and $H_{0^{'}}$ aren't othogonal to one another.

\section*{Acknowledgement}

 I am very grateful to Daniel Bennequin for our numerous discussions and his remarks on this paper. This work resulted from research supported by the University of Paris.

\appendix

\section{Results for the category of Pre-Hilbert spaces and Hilbert spaces}

\subsection{Completion}\label{a-h-c:Completion}
\begin{defn}\label{a-h-c:category-defn}

We shall consider several categories; the category that has as objects Hilbert spaces and as morphism continuous linear applications will be noted $\cat{Hilb}$, when the morphism are isometries it is $\cat{IHilb}$; when the objects are pre-Hilbert and the morphisms are continuous linear applications the corresponding category is $\cat{PHilb}$; finally the category that has as object pre-Hilbert spaces and as morphism isometries shall be denoted as $\cat{IPhilb}$ and the category that has as objects vector spaces and morphisms linear applications shall be noted as $\cat{Vect}$.
\end{defn}
%\vspace{0.5cm}

\begin{prop}\label{a-h-c:completion}
Let $H$ be a pre-Hilbert space, there is a Hilbert space that we shall note as $\com H$ and a continuous injective morphism $\eta_H:H\to \com H$ such that for any Hilbert spaces $H_1$ and any continuous linear application (of $\cat{PHilb}$), $\phi: H\to H_1$, there is a unique continuous linear application $\com \phi:\com H\to H_1$ such that 
\begin{equation}
\com \phi \circ \eta_H= \phi
\end{equation}

i.e. such that the following diagram commutes,

\begin{equation}
\begin{tikzpicture}[baseline=(current  bounding  box.center),node distance=2cm, auto]
\node (A) {$H$ };
\node (D) [right of =A] {$H_1$};
\node (B) [below of=D] {$\com H$};
\draw[->] (A) to node {$\eta_H $} (B);
\draw[->] (B) to node [right]{$\com \phi$} (D);
\draw[->] (A) to node {$\phi$} (D);
\end{tikzpicture}
\end{equation}

$\com H$ is called the completion of $H$.

\end{prop}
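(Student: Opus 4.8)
The plan is to realize $\com H$ as the classical metric-space completion of $H$ and then transport the linear and inner-product structure across. First I would take $\com H$ to be the set of equivalence classes of Cauchy sequences in $H$, where $(x_n)\sim(y_n)$ iff $\|x_n-y_n\|\to 0$. Termwise addition and scalar multiplication of Cauchy sequences are again Cauchy and respect this equivalence, so $\com H$ inherits a $\K$-vector space structure. For the inner product, given representatives $(x_n),(y_n)$, the scalars $\langle x_n,y_n\rangle$ form a Cauchy sequence in $\K$ --- this follows from the Cauchy--Schwarz inequality together with the boundedness of Cauchy sequences --- so I can set $\langle [x],[y]\rangle:=\lim_n\langle x_n,y_n\rangle$, check that it is independent of the chosen representatives, and verify sesquilinearity and positive-definiteness. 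The induced norm agrees with the completion metric, hence $\com H$ is complete and is therefore a Hilbert space.

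Next I would define $\eta_H$ by sending $x$ to the class of the constant sequence $(x,x,\dots)$. This map is linear and preserves the inner product, so it is an isometry, in particular continuous and injective; moreover its image is dense in $\com H$, since by construction every Cauchy sequence is the limit of the constant sequences of its own terms.

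For the universal property, let $\phi:H\to H_1$ be a continuous linear map into a Hilbert space $H_1$. Being bounded and linear, $\phi$ is Lipschitz, so for a Cauchy sequence $(x_n)$ the image $(\phi(x_n))$ is Cauchy in $H_1$; using completeness of $H_1$ I would define $\com\phi([x]):=\lim_n\phi(x_n)$. The Lipschitz bound makes this independent of the representative and shows $\com\phi$ is bounded, with $\|\com\phi\|=\|\phi\|$, while linearity passes to the limit. Evaluating on constant sequences gives $\com\phi\circ\eta_H=\phi$. Uniqueness is then immediate: any two continuous linear extensions agree on the dense subset $\eta_H(H)$, hence everywhere by continuity.

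The steps are all routine; the only genuinely load-bearing inputs are the completeness of $H_1$, which is what permits $\com\phi$ to be defined at all, and the density of $\eta_H(H)$ in $\com H$, which forces its uniqueness. The point that most deserves care --- and that I expect to be the main, if modest, obstacle --- is checking that the extended inner product is well-defined and positive-definite, since one must rule out that a nonzero class has zero norm; this uses exactly that $[x]=0$ precisely when $\|x_n\|\to 0$.
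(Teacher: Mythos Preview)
Your argument is correct and is the standard construction of the completion via equivalence classes of Cauchy sequences; all the steps you outline go through as described.

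There is little to compare against, however, because the paper does not actually prove this proposition: it is stated in the appendix as a known fact and left without proof. The closest the paper comes is Proposition~\ref{a-h-c:completion-closure}, where it shows that for a pre-Hilbert subspace $H_1$ of a Hilbert space $H$ one has $\overline{H_1}\cong \com H_1$; the proof there checks the universal property of the closure by the same extension-by-continuity argument you use for $\com\phi$ (take a Cauchy sequence, push it through $\phi$, use completeness of the target to get a limit, check well-definedness). So your treatment of the universal property is in the same spirit as what the paper does in that special case, while your construction of $\com H$ itself and the verification that it is a Hilbert space simply fill in what the paper takes for granted.
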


%\vspace{0.5cm}

\subsection{Epic and monic}\label{a-h-c:Epic and monic}

\begin{prop}\label{a-h-c:mono-epi}
Monomorphism of $\ph$ are injective continuous linear applications and epimorphism are surjective continuous linear applications.\\

Monomorphisms of $\h$ are injective applications and epimorphism are maps that have a dense image.\\

Monomorphisms of $\ih$ are injective applications and epimorphism are surjective applications. 
\end{prop}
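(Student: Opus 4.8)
The plan is to treat the three categories uniformly for monomorphisms and then handle epimorphisms one category at a time, since the witnesses that detect non-epimorphisms differ sharply between them. First, for monomorphisms I would use the kernel-versus-zero trick. Given $f\colon A\to B$, the kernel $\Ker f$ is again an object of the relevant category — a closed subspace in $\h$ because $f$ is continuous, and an arbitrary pre-Hilbert subspace in $\ph$ — and both the inclusion $\iota\colon\Ker f\to A$ and the zero map are morphisms. From $f\iota=0=f\cdot 0$ together with $f$ monic one gets $\iota=0$, hence $\Ker f=0$ and $f$ injective; the converse is immediate since the forgetful functor to $\Set$ is faithful. In $\ih$ every morphism is an isometry, hence already injective, so monomorphisms and injective maps coincide trivially there.

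For the easy implication on epimorphisms, a surjection in $\ph$ or $\ih$ is epic because two morphisms agreeing on the image agree everywhere, and in $\h$ a morphism with dense image is epic because two continuous maps agreeing on a dense set agree everywhere (pre-Hilbert spaces being Hausdorff metric spaces). The content is the reverse implication. In $\h$ I would argue contrapositively: if $\overline{\im f}\neq B$, the orthogonal projection $p$ of $B$ onto $(\overline{\im f})^{\perp}$ is a nonzero continuous map with $pf=0=0\cdot f$, so $f$ is not epic, forcing dense image. In $\ih$ the separating maps must themselves be isometries, so the projection is unavailable; instead, writing $B=M\oplus M^{\perp}$ with $M=\im f$ (which is closed, since the isometric image of a complete space is complete), I would embed $B$ into the orthogonal direct sum $C=M\oplus M^{\perp}\oplus M^{\perp}$ by the two isometries $g(m+n)=(m,n,0)$ and $h(m+n)=(m,0,n)$. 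These agree on $M$, so $gf=hf$, yet differ whenever $M^{\perp}\neq 0$, which forces surjectivity.

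For $\ph$ the obstacle is that incomplete inner-product spaces need not admit orthogonal projections onto closed subspaces, so the $\h$ argument does not transcribe directly. My plan is to pass to the completion $\com B$ via Proposition \ref{a-h-c:completion}: if $\im f$ is not dense in $B$ then, since $B$ carries the subspace topology, it is not dense in $\com B$ either, so its closure there has a nonzero orthogonal complement; any unit vector $e$ in that complement gives a continuous functional $x\mapsto\langle x,e\rangle$ on $\com B$ whose restriction to $B$ is nonzero (by density of $B$ in $\com B$) yet vanishes on $\im f$, thereby separating $f$ from the zero map. I expect this to be the main obstacle of the whole statement, and I would note that this method yields precisely \emph{dense image} for epimorphisms of $\ph$, exactly as in $\h$. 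Upgrading dense image to genuine surjectivity is the delicate point I would scrutinize most carefully, since a proper dense inclusion $M\hookrightarrow B$ is already epic in $\ph$ without being surjective; this suggests re-examining whether the intended conclusion for $\ph$ should read ``dense image'' in parallel with $\h$ rather than ``surjective''.
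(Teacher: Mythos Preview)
Your approach is largely parallel to the paper's but with two genuine differences worth noting.

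For monomorphisms, the paper uses pairs of maps out of $\mathbb{K}$ (sending $\lambda$ to $\lambda v$ and to $\lambda v_1$) rather than your kernel-versus-zero comparison; both arguments are standard and both work, and your remark that isometries are automatically injective handles $\ih$ cleanly in either version.

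For epimorphisms in $\h$ and $\ph$ the two proofs coincide in spirit: pass to the completion when needed and separate via an orthogonal projection or the associated functional. Your suspicion about the $\ph$ clause is entirely justified. The paper's own proof only shows that an epimorphism of $\ph$ has \emph{dense} image in its codomain (it concludes $v\in\overline{K}\subseteq H_1$ for every $v\in H_1$), not that it is surjective; your observation that a proper dense inclusion is already epic in $\ph$ shows the statement as printed cannot hold, and the intended conclusion is ``dense image'' just as in $\h$.

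The most substantive divergence is in $\ih$. The paper simply asserts that an epimorphism of $\ih$ has dense image and then uses completeness of the isometric image to upgrade to surjectivity; but the dense-image step cannot be borrowed from $\h$ or $\ph$, since the separating maps used there (functionals, projections) are not isometries and hence not morphisms of $\ih$. Your construction with the two isometric embeddings of $B=M\oplus M^{\perp}$ into $M\oplus M^{\perp}\oplus M^{\perp}$ supplies genuine witnesses in $\ih$ that distinguish any non-surjective isometry, and thereby closes what appears to be a gap in the paper's argument.
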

\begin{proof}
Let $H,H_1$ be pre-Hilbert spaces, let $\phi:H\to H_1$ be monic, let $v,v_1\in H$ be such that $\phi(v)=\phi(v_1)$. Let,

\begin{equation}
\begin{array}{ccccc}
i& : &\mathbb{K} & \to & H\\
& & \lambda & \mapsto &\lambda v\\
\end{array}
\end{equation}

and,

\begin{equation}
\begin{array}{ccccc}
i_1& : &\mathbb{K} & \to & H\\
& & \lambda & \mapsto &\lambda v_1\\
\end{array}
\end{equation}

Then,

\begin{equation}
\phi i=\phi i_1
\end{equation}

and so,

\begin{equation}
i=i_1
\end{equation}

Therefore $v=i(1)=i_1(1)=v_1$.\\

The previous proof also holds in $\h$ as any finite dimentional normed vector space is complete, i.e. $i$ and $i_1$ are morphisms of $\h$ when $H$ is a Hilbert space.\\

To show that injective application are monomorphisms, let $H, H_1, H_2$ be Hilbert or pre-Hilbert spaces and let $\phi: H_1\to H_2$ be an injective application, let $\psi: H\to H_1$ and $\psi_1: H\to H_1$ be such that, 
\begin{equation}
\phi\psi=\phi\psi_1
\end{equation}

then for any $v\in H$, $\phi(\psi(v))= \phi(\psi_1(v))$ and as $\phi$ is injective, $\psi_1(v)=\psi(v)$.\\

Let us now show that if $\phi: H\to H_1$ is an epimorphism of pre-Hilbert spaces its image is dense in $H_2$. Let us note $K= \im \phi$; we shall distinguish between the closure of $K$ in $H_1$, that we shall note as $\overline{K}\subseteq H_1$, and the closure of $K$ in $\com H_1$, that we shall note as $\overline{K}\subseteq \com H_1$. Let us note $p$ the projection onto $\overline{K}\subseteq \com H_1$; then for any $v\in H_1$ and $w\in K$, 

\begin{equation}
\langle v, w\rangle = \langle p(v),w\rangle
\end{equation}

therefore $\langle v, .\rangle = \langle p(v),.\rangle$, i.e.,
\begin{equation}
\langle v,\phi\rangle= \langle p(v),\phi \rangle
\end{equation}

and so $v=p(v)$ in $\com H_1$, but as $v\in H_1$ and $p(v)\in \overline{K}\subseteq \com H_1$ then $v\in \overline{K}\subseteq H_1$.\\

When $H$ and $H_1$ are Hilbert spaces, $\langle v,.\rangle $ and $\langle p(v),.\rangle$ are morphism of $h$ therefore, epimorphism have a dense image.

Let $\phi: H\to H_1$ have a dense image in $H_1$, let $\psi,\psi_1: H_1\to H_2$ be two morphims such that,

\begin{equation}
\psi \phi=\psi_1\phi
\end{equation} 

Let $v\in H_1$, there is a sequence $(w_n\in H,n\in \N)$ such that $\lim_{n\to\infty}\phi(w_n) v$, therefore,

\begin{equation}
\psi(v)=\lim_{n\to \infty}\psi\phi(w_n)= \lim_{n\to \infty}\psi_1\phi(w_n)=\psi_1(v)
\end{equation}

This argument still holds for morphism of $\h$ and therefore epimorphisms of $\h$ are exactly the applications that have a dense image. \\

Following the same argument than for $\ph$ one can show that epimorphisms of $\ih$ are injective maps.\\

Let $\phi:H\to H_1$ be an epimorphism of $\ih$, then $\phi$ has a dense image; but as $H$ is a Hilbert space and as $\phi$ is an isometry, $\im \phi$ is a Hilbert space. In particular $\im \phi$ is closed and therefore $\im \phi =H_1$. Now when $\phi:H\to H_1$ is surjective it is a monomorphism.

\end{proof}
%\vspace{0.5cm}

\begin{rem}
In general, for $\h$ and $\ph$, epic and monic does not imply isomorphism. However in $\ih$ it does.
\end{rem}

\begin{prop}
Let $\phi$ be a morphism of $\h$ such that $\phi$ is injective and surjective, then $\phi$ is an isomorphism.
\end{prop}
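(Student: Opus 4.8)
The plan is to show that the linear bijection $\phi$ has a continuous inverse, so that it qualifies as an isomorphism of $\h$. Writing $\phi: H\to H_1$, I would first note that since $\phi$ is both injective and surjective it admits a two-sided set-theoretic inverse $\phi^{-1}: H_1\to H$, and that this inverse is automatically linear: for $w,w_1\in H_1$ and scalars $\lambda,\mu$, applying $\phi$ to both sides of the identity $\phi^{-1}(\lambda w+\mu w_1)=\lambda\,\phi^{-1}(w)+\mu\,\phi^{-1}(w_1)$ and then cancelling the injective $\phi$ establishes linearity. Thus the only property that remains to be verified for $\phi$ to be an isomorphism of $\h$ is the continuity of $\phi^{-1}$.

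The key step is to invoke the Open Mapping Theorem (equivalently, Banach's bounded inverse theorem). Hilbert spaces are complete, hence Banach spaces, so $\phi$ is a continuous surjective linear map between Banach spaces, and the Open Mapping Theorem guarantees that $\phi$ is an open map. A continuous open bijection is a homeomorphism, so $\phi^{-1}$ is continuous. Being both continuous and linear, $\phi^{-1}$ is a morphism of $\h$, and since $\phi^{-1}\phi=\id_H$ and $\phi\phi^{-1}=\id_{H_1}$ hold by construction, $\phi$ is an isomorphism of $\h$.

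I expect the continuity of the inverse to be the only genuine obstacle, and it is not a formal one: it truly requires completeness, since the Open Mapping Theorem rests on the Baire Category Theorem applied to the complete space $H_1$. I would cite it as a standard result of functional analysis rather than reprove it. This is also what distinguishes the present statement from the preceding remark that epic and monic do not force an isomorphism in $\h$: an epimorphism of $\h$ need only have dense image, whereas here we assume genuine surjectivity, and it is exactly this stronger hypothesis, together with completeness, that makes the inverse bounded.
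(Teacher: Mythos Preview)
Your proof is correct and follows essentially the same route as the paper: the paper's proof is a one-liner invoking the Banach--Schauder theorem (another name for the Open Mapping / bounded inverse theorem) to conclude that the inverse is continuous. Your additional remarks on linearity of $\phi^{-1}$ and the role of completeness are accurate but not needed beyond what the paper gives.
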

\begin{proof}
Let $\phi$ be injective and surjective, therefore by Banach-Schauder theorem it is an isomorphism as its inverse is continuous.

\end{proof}

\begin{rem}
There are morphisms that are epic and monic of $\ph$ but that are not isomorphisms; let $H= \bigoplus_{n\in \N} \mathbb{K}$, and let, $s:H\to H$ be such that for any $n\in \N$,

\begin{equation}
s(e_n)=\frac{1}{n} e_n
\end{equation}

then $s$ is injective and surjective but is not an isomorphism as its inverse $d:H\to H$, defined for any $n\in \N$ as,

\begin{equation}
d(e_n)=n e_n
\end{equation}

is not bounded.

\end{rem}

\subsection{Forgetful Functors and left adjoints}\label{a-h-c:Forgetful Functors and left adjoints}
\begin{defn}
Let $U:\cat{Hilb}\to \cat{Philb}$ be the forgetful functor that forgets the fact that a Hilbert space is complete, i.e. any Hilbert space is a pre-Hilbert space and any morphism of Hilbert spaces is a morphism of pre-Hilbert spaces.
\end{defn}
%\vspace{0.5cm}

\begin{prop}\label{a-h-c:adjoint}
The left adjoint to $U$ is $\com$. Furthermore, $U\com U\com=U\com$; $U$ is fully faithfull and $\com$ is faithfull.
\end{prop}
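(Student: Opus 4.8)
The plan is to derive everything from the universal property already recorded in Proposition \ref{a-h-c:completion}, which says precisely that for each pre-Hilbert space $H$ the arrow $\eta_H\colon H\to U\com H$ is a universal arrow from $H$ to the forgetful functor $U$. First I would promote $\com$ to a functor: given $\phi\colon H\to H'$ in $\ph$, the composite $\eta_{H'}\phi\colon H\to U\com H'$ is a morphism of $\ph$ into a Hilbert space, so the universal property produces a unique $\com\phi\colon\com H\to\com H'$ in $\h$ with $\com\phi\,\eta_H=\eta_{H'}\phi$; uniqueness immediately yields $\com\id=\id$ and $\com(\psi\phi)=\com\psi\,\com\phi$, so $\com$ is a functor and $\eta$ is natural. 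The universal property then reads exactly as the required natural bijection
\begin{equation}
\Hom_{\h}(\com H, K)\;\cong\;\Hom_{\ph}(H, UK),\qquad f\mapsto Uf\circ\eta_H,
\end{equation}
natural in $H$ and $K$, which is the statement that $\com$ is left adjoint to $U$ with unit $\eta$.

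Next I would dispatch the two (co)faithfulness claims. For $U$ fully faithful: a morphism of $\h$ between Hilbert spaces is by definition a continuous linear map, and $U$ sends it to the very same map regarded between the underlying pre-Hilbert spaces, while conversely every continuous linear map between those underlying spaces is already a morphism of $\h$. Hence $U$ is a bijection on hom-sets, i.e.\ fully faithful. For $\com$ faithful: if $\com\phi=\com\psi$ for $\phi,\psi\colon H\to H'$, then $\eta_{H'}\phi=\com\phi\,\eta_H=\com\psi\,\eta_H=\eta_{H'}\psi$, and since $\eta_{H'}$ is injective (Proposition \ref{a-h-c:completion}) it is monic, forcing $\phi=\psi$.

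The hard part will be the idempotency equation $U\com U\com=U\com$, where the subtlety is obtaining an equality of functors rather than merely a natural isomorphism. The conceptual reason is that $U$ is fully faithful, so the adjunction is a reflection and the counit $\epsilon\colon\com U\Rightarrow\id_{\h}$ is a natural isomorphism: concretely, for a Hilbert space $K$ the space $UK$ is already complete, so its completion adds nothing and $\eta_{UK}$ is an isomorphism, whence by the triangle identity $U\epsilon_K\circ\eta_{UK}=\id_{UK}$ the morphism $\epsilon_K$ is an isomorphism. This makes the monad $U\com$ idempotent, giving $U\com U\com\cong U\com$. To upgrade this to the stated strict equality I would invoke the standard construction in which the completion of an already-complete space is taken to be the space itself (equivalently, $\eta$ is the identity on complete spaces), so that $\com U\com H=\com H$ on objects, and likewise on morphisms by uniqueness; applying $U$ then yields $U\com U\com=U\com$ on the nose.
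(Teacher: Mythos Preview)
Your proof is correct and follows essentially the same approach as the paper. The paper establishes the adjunction via the unit--counit triangle identities (choosing $\com U=\id_{\h}$ so that $\epsilon=\id$), then deduces $U\com U\com=U\com$ immediately from $\com U=\id$, and obtains the faithfulness claims from the abstract fact that $\epsilon$ an isomorphism forces $U$ fully faithful and $\eta$ a monomorphism forces $\com$ faithful; you do the same things but present the adjunction via the hom-set bijection and argue the faithfulness statements directly rather than by citation.
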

\begin{proof}

$\eta: \id_\cat{PHilb}\to U\com $ is a natural transformation. Let for any Hilbert space, $H$, $\epsilon_H= \id$ then $\epsilon$ is a natural transformation from $\id_\cat{Hilb}\to \com U$ where $\com U$ cen be identified to $\id_{\cat{Hilb}}$, here we choose $\com U= \id_{\cat{Hilb}}$. One has that, 

\begin{equation}
U\star \epsilon \eta\star U= id_U
\end{equation}

and,

\begin{equation}
\epsilon \star \com \com \star \eta= id_{\com}
\end{equation}

Therefore $\com $ is left adjoint to $U$ (remark under Proposition A.5.7  Appendix A \cite{coend}). Furthermore as $\com U= id$, $U\com U\com= U\com$ and as $\epsilon$ is an isomorphism and $\eta$ is a monomorphism, one has that $U$ is fully faithfull and $\com$ is faithfull (Proposition A.5.9 \cite{coend}).
\end{proof}
%\vspace{0.5cm}

\begin{cor}
The left adjoint to $U|_{\cat{IHilb}}^{\cat{PIhilb}}$ is $\com|_{\cat{PIhilb}}^{\cat{IHilb}}$. 
\end{cor}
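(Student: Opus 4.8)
The plan is to obtain the desired adjunction by restricting the adjunction $\com \dashv U$ of Proposition \ref{a-h-c:adjoint} to the (non-full) subcategories $\ih \subseteq \h$ and $\ip \subseteq \ph$, which have the same objects but keep only the isometries as morphisms. A general adjunction restricts to such subcategories as soon as both functors send these morphisms into the subcategories and the unit and counit have all their components there, since the triangle identities are then inherited verbatim. So the work reduces to checking these three conditions.

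First I would dispose of the formal points. The counit was chosen to be $\epsilon_H = \id$ in Proposition \ref{a-h-c:adjoint}, which is an isometry, and the unit $\eta_H : H \to U\com H$ is the dense inclusion of a pre-Hilbert space into its completion, hence also an isometry; thus every unit and counit component is already a morphism of $\ip$, respectively of $\ih$. The forgetful functor $U$ manifestly preserves isometries, since it alters neither the underlying linear map nor the inner product.

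The one substantive point is that $\com$ carries isometries to isometries. Given an isometry $\phi : H \to H_1$ of $\ip$, Proposition \ref{a-h-c:completion} supplies a unique continuous linear $\com\phi : \com H \to \com H_1$ with $\com\phi\,\eta_H = \eta_{H_1}\phi$. For $v \in H$ we have $\|\com\phi\,\eta_H(v)\| = \|\eta_{H_1}\phi(v)\| = \|\phi(v)\| = \|v\| = \|\eta_H(v)\|$, so $\com\phi$ is an isometry on the dense subspace $\eta_H(H) \subseteq \com H$. Since $\com\phi$ and the inner product are continuous, the identity $\langle \com\phi(x),\com\phi(y)\rangle = \langle x,y\rangle$ propagates from the dense subspace to all of $\com H$, so $\com\phi$ is an isometry of $\ih$.

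With these three observations the restricted functors $U|_{\ih}^{\ip}$ and $\com|_{\ip}^{\ih}$, together with the restricted unit and counit, form an adjunction. Equivalently, one may re-derive the universal property directly: for $H \in \ip$, $H_1 \in \ih$, and an isometry $\phi : H \to U H_1$, the unique continuous linear factorisation $\com\phi : \com H \to H_1$ through $\eta_H$ furnished by the full adjunction is automatically an isometry by exactly the density argument above, and is a fortiori the unique \emph{isometry} with $U(\com\phi)\,\eta_H = \phi$. The only real obstacle is this isometry-preservation step, and it amounts to the standard density-plus-continuity argument; everything else is formal bookkeeping.
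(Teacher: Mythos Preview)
Your proof is correct and follows the same approach as the paper's one-line argument, which simply observes that $\eta$ and $\epsilon$ are isometries. You are in fact more thorough: the paper tacitly assumes that $\com$ carries isometries to isometries (so that the restricted functor $\com|_{\ip}^{\ih}$ is well defined), whereas you supply the density-plus-continuity argument explicitly.
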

\begin{proof}
$\eta$ and $\epsilon$ have their morphism in $\cat{IPhilb}$ and $\cat{IHilb}$, i.e. they are isometries.
\end{proof}
%\vspace{0.5cm}

\begin{rem}
The forgethful functor $U_1:\ip\to \ph$ does not have left adjoint functor as, $0$ is final in $\cat{PHilb}$ but $0$ is not final in $\ip$ (there is no isometry form $\mathbb{K}\oplus \mathbb{K}$ to $0$).\\

 Furthermore if $U_1$ has a right adjoint, for any object $H$ of $\cat{PHilb}$, object $H_1$ of $\cat{IPhilb}$ and morphism of $\cat{Philb}$, $\phi:U_1 H_1\to H$, if there were $G(H)$ an object of $\cat{IPhilb}$ and $\epsilon_H: U_1GH\to H$ such that there was a unique isometry $\psi: H_1\to GH$ for which $\phi=\epsilon_H\psi $ then,
 
\begin{equation} 
\Vert\phi\Vert\leq \Vert\epsilon\Vert
\end{equation}

The last equation leads to a contradiction when choosing $\Vert \phi\Vert >\Vert \epsilon \Vert$.

\end{rem}

\begin{prop}\label{a-h-c:completion-closure}
Let $H$ be a Hilbert space and $H_1$ a pre-Hilbert subspace of $H$; 
\begin{equation}
\overline{H_1}\cong \com H_1
\end{equation} 
\end{prop}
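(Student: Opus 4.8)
The plan is to apply the universal property of the completion (Proposition \ref{a-h-c:completion}) to the inclusion of $H_1$ into the closed subspace $\overline{H_1}$, and then to verify that the resulting map is an isometric isomorphism. Since $\overline{H_1}$ is a closed subspace of the Hilbert space $H$, it is itself a Hilbert space; write $j:H_1\to \overline{H_1}$ for the inclusion, which is an isometry of pre-Hilbert spaces. By Proposition \ref{a-h-c:completion} there is a unique continuous linear map $\com j:\com H_1\to \overline{H_1}$ with $\com j\,\eta_{H_1}=j$, and this $\com j$ is the candidate isomorphism.

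First I would show that $\com j$ is an isometry. Since $\eta_{H_1}$ has dense image in $\com H_1$ and $j$ preserves the inner product, the composite $\com j\,\eta_{H_1}=j$ preserves norms on the dense subspace $\eta_{H_1}(H_1)$; as $\eta_{H_1}$ is itself norm-preserving, $\com j$ agrees with an isometry on a dense subspace and hence, by continuity, is a global isometry. In particular $\com j$ is injective, and its image is complete and therefore closed in $\overline{H_1}$.

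Next I would check surjectivity. The image $\com j(\com H_1)$ contains $\com j\,\eta_{H_1}(H_1)=j(H_1)=H_1$; being closed by the previous step and containing $H_1$, it must contain the closure $\overline{H_1}$. Since the image is a priori contained in $\overline{H_1}$, it equals $\overline{H_1}$, so $\com j$ is onto. A bijective isometry is an isometric isomorphism, yielding $\overline{H_1}\cong\com H_1$.

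This argument is essentially a direct unwinding of the universal property, so I expect no deep obstacle. The only point requiring care is the passage from \emph{isometry on the dense subspace $\eta_{H_1}(H_1)$} to \emph{isometry everywhere}: this rests on the continuity of $\com j$ together with the density of $\eta_{H_1}(H_1)$, and the surjectivity step relies on the fact that the isometric image of a complete space is closed.
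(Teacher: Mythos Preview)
Your argument is correct. The paper takes a slightly different route: instead of invoking the universal property of $\com H_1$ to produce a map $\com H_1\to\overline{H_1}$ and then checking it is an isometric isomorphism, the paper shows directly that $\overline{H_1}$ itself satisfies the universal property of the completion. Concretely, given any Hilbert space $H_2$ and any continuous linear $\phi:H_1\to H_2$, the paper extends $\phi$ to $\overline{H_1}$ by hand, setting $\tilde\phi(x)=\lim_n\phi(x_n)$ for any sequence $(x_n)\subseteq H_1$ with $x_n\to x$, and verifies this is well defined and unique. The isomorphism $\overline{H_1}\cong\com H_1$ then follows from uniqueness of objects representing a universal property. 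Your approach is arguably cleaner once Proposition~\ref{a-h-c:completion} is taken as given, since it avoids redoing the sequential extension argument; the paper's approach, by contrast, is self-contained and does not rely on knowing that $\eta_{H_1}$ is an isometry with dense image (a fact you use but which is only stated implicitly in the paper, in the corollary following Proposition~\ref{a-h-c:adjoint}).
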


\begin{proof}
Let $H_2$ be a Hilbert space and $\phi:H_1\to H_2$ be a continuous linear application. Let $(x_n\in H_1,n\in \N)$ be a sequence such that 
\begin{equation}
\lim_{n\to \infty}x_n= x
\end{equation}

 with $x\in H$; then $(x_n,n\in \N)$ is a Cauchy sequence, therefore $(\phi(x_n),n\in \N)$ is a Cauchy sequence and as $H_2$ is complete there is $y\in H_2$ such that,

\begin{equation}
\lim_{n\to n \infty}\phi(x_n)=y
\end{equation}
 
We shall say that $xRy$.\\

 For any other sequence $({x_1}_n,n\in N)$ such that,
 
\begin{equation}
\lim_{n\to \infty} {x_1}_n=x
\end{equation} 

 and for any $\epsilon >0$, there is $N\in \N$ such that 
\begin{equation}
\Vert y -y_1\Vert \leq \Vert y-\phi(x_n)\Vert +\Vert y_1-\phi({x_1}_n)\Vert+ \Vert\phi\Vert \Vert x_N-{x_1}_N\Vert <\epsilon
\end{equation}

Therefore the relation $R$ is a functional relation and it extends $\phi$ and it is the unique map to do so. 

\end{proof}
%\vspace{0.5cm}

\subsection{Direct sum}\label{a-h-c:Direct sum}

We shall now recall and extend some results on the Direct sum in the catergory of Hilbert space (see for example \cite{Heunen}).\\

\begin{defn}
Let $U_2: \ph\to \cat{Vect}$ be the forgethful functor that forgets the scalar product of a pre-Hilbert space, i.e. any pre-Hilbert space is a vector space and any morphism of pre-Hilbert spaces is a vector space morphism.
\end{defn}
%\vspace{0.5cm}

\begin{defn}\label{a-h-c:direct-sum}
Let $I$ be any set and $(H_i, \langle,\rangle_i,i \in I)$ be a collection of pre-Hilbert spaces. Let $\underset{i\in I}{\bigoplus}U_2H_i$ be the direct sum in $\cat{Vect}$. For any $u,v\in \underset{i\in I}{\bigoplus}U_2 H_i$ let,
\begin{equation}
\langle u,v\rangle=\underset{i\in I}{\sum}\langle u_i,v_i\rangle_i
\end{equation} 

$(\underset{i\in I}{\bigoplus}U_2H_i,\langle,\rangle)$ is a pre-Hilbert space and we shall note also note is as $\bigoplus_{i\in I}H_i$. We shall note its inclusions as $\psi_i$ and the projections as $\pi_i$.
\end{defn}

\begin{rem}
Let $(H_i,i\in I)$ be a collection of pre-Hilbert spaces, the inclusion $\psi_i: H_i\to \bigoplus_{i\in I}H_i$ is an isometry.
\end{rem}

\begin{prop}\label{a-h-c:no-colimit}
$\ph$, $\h$, $\ih$, $\ip$ are not cocomplete for non finite diagrams, i.e. if $D$ is a functor from any category $\cat{C}$ to one of these four categories then $\colim D$ does not necessarily exist. 

\end{prop}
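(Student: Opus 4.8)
The plan is to prove non-cocompleteness by exhibiting, in each of the four categories, a single non-finite diagram whose colimit fails to exist; the natural candidate is the coproduct of a countable family of copies of the ground field $\K$, that is, the colimit of the discrete diagram with value $\K$ indexed by $\N$ (a coproduct being a colimit of a here-infinite discrete diagram). For each category I would argue by contradiction: assume an object $C$ together with a universal cocone $(\iota_n:\K\to C,\ n\in\N)$ exists, set $c_n=\iota_n(1)\in C$, and derive an impossibility from the universal property by feeding in carefully chosen target cocones. The point is that a single object $C$ would have to accommodate simultaneously all admissible families of maps out of the factors, and no such object can.

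For the categories $\h$ and $\ph$, whose morphisms are arbitrary continuous linear maps, the obstruction is unboundedness. First I would note that $c_n\neq 0$: testing the universal property against the cocone into $\K$ that is the identity on the $n$-th factor and the zero map elsewhere (the zero map being a legitimate morphism here) forces any factorization to send $c_n$ to $1$, so $c_n$ cannot be $0$. Then, choosing target $D=\K$ and the family $f_n(1)=n\Vert c_n\Vert$, the universal property yields a single continuous linear functional $f:C\to\K$ with $f(c_n)=n\Vert c_n\Vert$; continuity gives $n\Vert c_n\Vert=|f(c_n)|\le \Vert f\Vert\,\Vert c_n\Vert$, hence $n\le\Vert f\Vert$ for every $n$, a contradiction. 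The same computation applies verbatim in $\ph$, since $\K$ is an object there and the induced map is still required to be bounded.

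For $\ih$ and $\ip$, where morphisms are isometries, the unboundedness trick is unavailable (an isometry $\K\to\K$ is multiplication by a unimodular scalar), so I would instead exploit the rigidity of inner products. The quantity $\theta=\langle\iota_1(1),\iota_2(1)\rangle_C$ is fixed once $C$ is fixed. Given any (pre-)Hilbert space $D$ and unit vectors $d_1,d_2\in D$, the maps $1\mapsto d_i$ are isometries and, completed arbitrarily on the remaining factors, give a cocone whose induced isometry $f:C\to D$ satisfies $\langle d_1,d_2\rangle_D=\langle \iota_1(1),\iota_2(1)\rangle_C=\theta$. Taking $d_1=d_2$ (forcing $\theta=1$) and then an orthonormal pair $d_1\perp d_2$ in $D=\K^2$ (forcing $\theta=0$) yields the desired contradiction; note this already fails for two factors, hence a fortiori for the countable coproduct.

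The main obstacle is conceptual rather than computational: recognizing that the four categories split into two regimes demanding genuinely different arguments, namely boundedness failure for the full-morphism categories $\h,\ph$ and angle-rigidity failure for the isometry categories $\ih,\ip$, and then verifying in each case that the chosen test cocones really are morphisms of the category at hand (in particular that the zero map is admissible for $\h,\ph$ but must be replaced by the inner-product obstruction for $\ih,\ip$).
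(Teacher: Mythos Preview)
Your argument is correct. For $\h$ you reproduce essentially the paper's unboundedness obstruction (the paper normalizes as $\phi_n(\lambda)=n\lambda/\Vert\eta_n\Vert$, you as $f_n(1)=n\Vert c_n\Vert$, but the mechanism is identical). For $\ph$ and $\ip$ the paper instead transfers the result from $\h$ and $\ih$ via the adjunction $\com\dashv U$ (if the pre-Hilbert colimit existed, its completion would be the Hilbert colimit), whereas you simply observe that the same direct argument goes through verbatim; both are fine. The genuine divergence is in $\ih$: the paper first proves that any putative colimit of the countable discrete $\K$-diagram would have to be isometrically isomorphic to $\com\bigoplus_{n}\K$, and then exhibits the cocone $\phi_n=\id:\K\to\K$, whose factorization would send every basis vector $e_n$ to $1$ and hence cannot be an isometry. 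Your angle-rigidity argument bypasses the identification of the colimit entirely: the single number $\theta=\langle\iota_1(1),\iota_2(1)\rangle_C$ would have to equal $\langle d_1,d_2\rangle_D$ for every admissible target pair, which is impossible. Your route is shorter and, as you note, already works with two factors (the paper records this consequence separately as a remark on finite non-cocompleteness of $\ih$); the paper's route has the side benefit of pinning down what the colimit would have to be before showing it fails.
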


\begin{proof}

Let $\cat{C}= \mathbb{N}$ seen as a set and not as a poset. Let $D$ be a functor from $\N$ to $\h$.\\

We shall show that there can't be a Hilbert space $(\colim D,(\eta_n,n\in \N))$ such that for any $H$ Hilbert space and $(\phi_n: D(n)\to H,n\in \N)$ collection of continuous linear applications, these morphisms factor through $\eta_n$, i.e. there is $\phi$ such that for any $n\in \N$, 

\begin{equation}
\phi_n =\phi \eta_n
\end{equation}

Let $D(n)=\mathbb{K}$, let us assume that $\colim D$ exists. Let us first show that for any $n\in \N$, $\eta_n$ is injective. Let $H=\com \bigoplus_{n\in \N} \mathbb{K}$, then as the inclusions $\psi_n$ are injective, one has that,

\begin{equation}
\Vert \eta_n\Vert \neq 0
\end{equation}

Let for any $n\in \N$, 
\begin{equation}
\begin{array}{ccccc}
\phi_n& : &\mathbb{K}& \to &\mathbb{K}\\
& & \lambda & \mapsto & n\lambda/\Vert \eta_n\Vert\\
\end{array}
\end{equation}

Then for any $n\in \N$,

\begin{equation}
\Vert \phi \Vert \geq n
\end{equation}

which contradictory. Therefore for $\h$, colimits over any category do not necessarily exist.\\

As $\com$ is left adjoint to $U$ the forgetful functor from $\h$ to $\ph$, one has that,

\begin{equation}
\colim D=\colim \com U D=\com \colim U D
\end{equation}

As $\colim D$ does not necessarily exist, $\colim U D$ does not necessarily exist which shows that colimits do not necessarily exist in $\ph$.\\

Let us now show that the same statement holds in $\ih$. Let $D$ be defined as previously and let us assume that $\colim D$ exists. For any $n\in \N$, $\psi_n: D(n)\to \com \bigoplus_{n\in \N} \mathbb{K}$ factor through $\eta_n$, i.e there is $\psi: \colim D\to \bigoplus_{n\in \N} \mathbb{K}$ such that for any $n\in \N$, 

\begin{equation}
\psi_n=\psi \eta_n
\end{equation}

Therefore for any $\lambda \in \bigoplus \mathbb{K}$, 
\begin{equation}
\psi(\sum_{n\in \N} \lambda_n \eta_n(1))= \bigoplus_{n\in \N} \lambda_n 
\end{equation}

And $\psi$ is an isometry, therefore,

\begin{equation}
\colim D\cong \com \bigoplus_{n\in \N}\mathbb{K}
\end{equation}

Let us now consider for any $n\in \N$,$\phi_n=\id$. Then $\phi$ can't be an isometry as $\phi(e_n\times n)=\phi(e_m\times m)$ for any $n,m\in \N$.\\

One concludes that in $\ih$ not every diagram $D$ has a colimit and, using the same argument that enabled us to conclude for $\ph$ from $h$, we conclude that $\ip$ does not have all colimits.

\end{proof}
%\vspace{0.5cm}

\begin{rem}
In $\ih$, following the same lines than in Proposition \ref{a-h-c:no-colimit} but for two copies of $\mathbb{K}$, enables us to show there can be no finite sum, i.e. $\ih$ is not (finitely) cocomplete. 
\end{rem}

\subsection{Quotient of a pre-Hilbert space by a subspace}

\begin{rem}
Let $H$ be a pre-Hilbert space; the data of its scalar product is the same as the data of its norm by polarization identity, i.e. if $\mathbb{K}= \R$, for any $u,v\in H$,

\begin{equation}
\langle u,v\rangle= \frac{1}{4}\left( \Vert u+v\Vert^2 -\Vert u-v\Vert^2\right)
\end{equation}

and if $\mathbb{K}=\C$, 
\begin{equation}
\langle u,v\rangle= \frac{1}{4}\left( \Vert u+v\Vert^2 -\Vert u-v\Vert^2 +i\Vert u-iv\Vert^2 -i\Vert u+i v\Vert^2\right)
\end{equation}

We will note $(H,\langle,\rangle)$ when we will refer to its scalar product and $(H,\Vert .\Vert)$ when we refer to its norm.

\end{rem}

\begin{prop}\label{a-h-c:quotient-map}
Let $(H, \Vert.\Vert)$ be a pre-Hilbert space and $H_1$ be a closed subspace of $H_1$; let for any $v\in H$, 

\begin{equation}\label{a-h-c:quotient-norm}
\Vert v\Vert_0=\underset{y\in H_1}{\inf}\Vert v-y\Vert
\end{equation} 

Then $(H/H_1, \Vert.\Vert_0)$ is a pre-Hilbert space. Furthermore $\pi:H\to H/H_1$, the quotient map, is continuous.
\end{prop}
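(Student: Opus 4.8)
The plan is to realise the quotient norm as a genuine Hilbert-space norm by passing to the completion $\com H$, where an orthogonal projection is available even though $H_1$ itself need not be complete. The elementary points come first: the quantity $\Vert v\Vert_0$ depends only on the class $[v]\in H/H_1$, since replacing $v$ by $v+y_0$ with $y_0\in H_1$ merely reindexes the infimum over $H_1$, so $\Vert\cdot\Vert_0$ is well defined on $H/H_1$. Taking $y=0\in H_1$ gives $\Vert[v]\Vert_0\leq\Vert v\Vert$, which simultaneously shows that the quotient map $\pi$ is continuous (indeed $1$-Lipschitz) and that $\Vert\cdot\Vert_0$ is finite; homogeneity and the triangle inequality are routine infimum manipulations.

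The key step would be the following identification. Let $\overline{H_1}$ denote the closure of $H_1$ inside the Hilbert space $\com H$ (Proposition \ref{a-h-c:completion}), and let $P$ be the orthogonal projection of $\com H$ onto $\overline{H_1}$. I would show that for every $v\in H$,
\begin{equation}
\Vert[v]\Vert_0=\Vert(1-P)v\Vert.
\end{equation}
Since $H_1$ is dense in $\overline{H_1}$ by definition of the closure, the infimum of $\Vert v-y\Vert$ over $y\in H_1$ equals the infimum over $y\in\overline{H_1}$, and the latter is exactly the distance from $v$ to the closed subspace $\overline{H_1}$ of the Hilbert space $\com H$, namely $\Vert v-Pv\Vert$. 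This is the heart of the argument and the one place where completeness is genuinely used: one cannot project orthogonally onto $H_1$ inside $H$, but one can project onto $\overline{H_1}$ inside $\com H$.

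With this identification, positive definiteness follows: if $\Vert[v]\Vert_0=0$ then $(1-P)v=0$, so $v\in\overline{H_1}$; but $v\in H$ and the embedding $H\hookrightarrow\com H$ is isometric, so any sequence in $H_1$ converging to $v$ in $\com H$ already converges to $v$ in $H$, whence $v\in H_1$ because $H_1$ is closed in $H$, i.e. $[v]=0$. The same computation shows $H\cap\overline{H_1}=H_1$, so the assignment $[v]\mapsto(1-P)v$ is a well-defined, injective linear isometry of $(H/H_1,\Vert\cdot\Vert_0)$ into $\com H$. The parallelogram law in $\com H$ then transports to $\Vert\cdot\Vert_0$: for $u,v\in H$,
\begin{equation}
\Vert[u]+[v]\Vert_0^2+\Vert[u]-[v]\Vert_0^2=\Vert(1-P)(u+v)\Vert^2+\Vert(1-P)(u-v)\Vert^2=2\Vert[u]\Vert_0^2+2\Vert[v]\Vert_0^2.
\end{equation}

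By the polarisation identity recalled in the remark preceding the statement, a norm satisfying the parallelogram law arises from a scalar product, so $(H/H_1,\Vert\cdot\Vert_0)$ is a pre-Hilbert space, and continuity of $\pi$ was already noted. The only real obstacle is the completeness issue handled in the key step; everything else is bookkeeping, and the isometric embedding $[v]\mapsto(1-P)v$ in fact identifies $H/H_1$ with a pre-Hilbert subspace of $\com H$, which is a cleaner statement than merely checking the parallelogram law by hand.
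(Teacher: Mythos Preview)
Your argument is correct. It is, almost verbatim, the alternative proof the paper records in the remark immediately following its main proof: pass to the completion $\com H$, let $p$ be the orthogonal projection onto $\overline{H_1}^{\perp}$ (equivalently use $1-P$ with $P$ projecting onto $\overline{H_1}$), identify $\Vert[v]\Vert_0$ with $\Vert p(v)\Vert$, and read off the pre-Hilbert structure by polarisation. Your treatment of positive definiteness via $H\cap\overline{H_1}=H_1$ is exactly the closedness argument the paper sketches.

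The paper's \emph{main} proof takes a different route that deliberately avoids the completion. It proves a technical approximation lemma (Lemma \ref{a-h-c:sum-stable-approximation}) showing that near-minimisers $v_1,v_2\in H_1$ for $\Vert x-\cdot\Vert$ and $\Vert y-\cdot\Vert$ can be chosen so that $v_1+v_2$ is also a near-minimiser for $\Vert x+y-\cdot\Vert$; this lets one verify the parallelogram identity for $\Vert\cdot\Vert_0$ directly inside $H$, and then Fr\'echet--Von Neumann--Jordan (Lemma \ref{a-h-c:parallelogram}) finishes. What this buys is an ``intrinsic'' argument that does not invoke the existence of $\com H$; what your approach buys is a much shorter and more conceptual proof, together with the bonus that $[v]\mapsto (1-P)v$ exhibits $H/H_1$ as isometrically embedded in $\com H$.
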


%\vspace{0.5cm}

\begin{nota}
Let $H$ be a Hilbert space and $H_1$ a closed Hilbert subspace of $H$, when we will refer to $H/H_1$ as being a pre-Hilbert space we will be refering to its canonical norm $\Vert.\Vert_0$ (Proposition \ref{a-h-c:quotient-map}).
\end{nota}

\begin{lem}[Fréchet-Von Neumann-Jordan]\label{a-h-c:parallelogram}
Let $(B,\Vert .\Vert)$ be a $\mathbb{K}$-normed space, whose norm satifies the parallelogram identity, i.e for any $u,v\in B$,

\begin{equation}
\Vert u+v \Vert + \Vert u-v\Vert= 2(\Vert u\Vert + \Vert v\Vert)
\end{equation}

then $B$ is a pre-Hilbert space. 

\end{lem}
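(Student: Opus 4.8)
The plan is to construct the inner product explicitly via the polarization identity recalled just above the lemma, and then to verify the pre-Hilbert axioms, with the parallelogram identity doing all of the real work. Concretely, for $u,v \in B$ I would define $\langle u, v\rangle$ by the real or complex polarization formula according to whether $\mathbb{K} = \R$ or $\mathbb{C}$, so that by construction $\langle u, u\rangle = \Vert u\Vert^2$. This immediately yields positive-definiteness (since $\Vert u\Vert = 0$ iff $u = 0$ in a normed space) and guarantees that the resulting inner product induces exactly the given norm $\Vert \cdot\Vert$. Conjugate symmetry (plain symmetry in the real case) follows by inspecting the polarization formula term by term.

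The heart of the argument is additivity in the first variable, $\langle u + w, v\rangle = \langle u, v\rangle + \langle w, v\rangle$, and this is precisely where the parallelogram identity enters. First I would treat the real case. Writing $4\langle x, y\rangle = \Vert x + y\Vert^2 - \Vert x - y\Vert^2$, I would apply the parallelogram identity $\Vert a\Vert^2 + \Vert b\Vert^2 = \tfrac12(\Vert a + b\Vert^2 + \Vert a - b\Vert^2)$ to the pairs $(u+v,\, w+v)$ and $(u-v,\, w-v)$ and subtract, obtaining

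\begin{equation}
4\langle u, v\rangle + 4\langle w, v\rangle = 2\langle u+w, 2v\rangle .
\end{equation}

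Specialising to $w = 0$ (and using $\langle 0, v\rangle = 0$) gives the doubling relation $\langle u, 2v\rangle = 2\langle u, v\rangle$; feeding this back into the displayed identity produces

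\begin{equation}
\langle u+w, v\rangle = \langle u, v\rangle + \langle w, v\rangle .
\end{equation}

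The complex case then follows by applying the real computation separately to each of the two real-polarization pieces making up the complex formula.

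With additivity in hand, homogeneity $\langle \lambda u, v\rangle = \lambda \langle u, v\rangle$ would be bootstrapped in the standard manner: additivity gives it for $\lambda \in \N$ by induction, hence for $\lambda \in \Z$ and then $\lambda \in \mathbb{Q}$; continuity of $\lambda \mapsto \Vert u + \lambda v\Vert$ (Lipschitz in $\lambda$ with constant $\Vert v\Vert$) extends it to all real scalars by density of $\mathbb{Q}$, and in the complex case a direct check on the polarization formula establishes $\langle i u, v\rangle = i\langle u, v\rangle$, which together with real homogeneity yields full $\mathbb{C}$-linearity. Assembling positive-definiteness, (conjugate) symmetry, additivity, and homogeneity shows that $\langle\cdot,\cdot\rangle$ is an inner product inducing $\Vert\cdot\Vert$, so $(B,\Vert\cdot\Vert)$ is a pre-Hilbert space.

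I expect the additivity step to be the only genuine obstacle: the parallelogram identity must be invoked in exactly the right combination, and one has to be careful to derive the auxiliary doubling relation \emph{before} concluding full additivity, since homogeneity is not yet available at that stage. Everything else — symmetry, the rational-to-real extension of homogeneity, and the complex bookkeeping — is routine once additivity is secured.
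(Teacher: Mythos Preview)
Your argument is correct and is the standard proof of the Fr\'echet--von Neumann--Jordan theorem; the additivity computation via the parallelogram law applied to the pairs $(u+v,w+v)$ and $(u-v,w-v)$ is exactly right, and the bootstrap to full $\mathbb{K}$-homogeneity is handled carefully. The paper itself does not give a proof at all: it simply writes ``Refer to \cite{Jordan-vNeumann}'' and moves on, so you have supplied strictly more than the paper does (and, incidentally, you have silently corrected the missing squares in the paper's statement of the parallelogram identity).
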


\begin{proof}
Refer to \cite{Jordan-vNeumann}
\end{proof}

\begin{lem}\label{a-h-c:sum-stable-approximation}

Let $H$ be a pre-Hilbert space and $H_1\subseteq H$ a pre-Hilbert subspace; let $x,y\in H$, let $\epsilon>0$, there is $\delta>0$ such that one can choose $v_1,v_2\in H_1$ such that,

\begin{equation}
\underset{u\in H_1}{\inf} \Vert x-u\Vert +\delta > \Vert x-v_1\Vert
\end{equation} 

\begin{equation}
\underset{u\in H_1}{\inf} \Vert y-u\Vert +\delta > \Vert y-v_2\Vert
\end{equation} 

And

\begin{equation}
\underset{u\in H_1}{\inf} \Vert x+y-u\Vert  +\epsilon >\Vert x+y-v_1-v_2\Vert
\end{equation} 

\end{lem}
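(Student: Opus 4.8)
The plan is to reduce everything to the orthogonal projection in the completion, where the three approximation requirements become an almost immediate consequence of the \emph{linearity} of that projection. First I would introduce the shorthand $d(z)=\underset{u\in H_1}{\inf}\Vert z-u\Vert$ for the distance of $z\in H$ to $H_1$, so that the three inequalities to be established read $\Vert x-v_1\Vert<d(x)+\delta$, $\Vert y-v_2\Vert<d(y)+\delta$, and $\Vert x+y-v_1-v_2\Vert<d(x+y)+\epsilon$. The naive attempt — choose $v_1,v_2\in H_1$ that are separately near-optimal for $x$ and $y$ — fails, because $d$ only obeys the triangle inequality $d(x+y)\le d(x)+d(y)$, which is the wrong direction: the sum of two separately good approximations controls $d(x)+d(y)$, not $d(x+y)$. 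So the choices of $v_1$ and $v_2$ must be coordinated.

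To coordinate them I would pass to the completion. Embedding $H$ isometrically into $\com H$ (Proposition \ref{a-h-c:completion}) and letting $P:\com H\to\overline{H_1}$ be the orthogonal projection onto the closure of $H_1$, one has $d(z)=\Vert z-Pz\Vert$ for every $z\in H$: since $H_1$ is dense in $\overline{H_1}$, the infimum over $H_1$ coincides with the infimum over $\overline{H_1}$, which is attained at $Pz$. Then, given $\epsilon>0$, I would set $\eta=\epsilon/3$ and, again using density of $H_1$ in $\overline{H_1}$, pick $v_1,v_2\in H_1$ with $\Vert Px-v_1\Vert<\eta$ and $\Vert Py-v_2\Vert<\eta$. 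The first two conditions follow from $\Vert x-v_1\Vert\le\Vert x-Px\Vert+\Vert Px-v_1\Vert<d(x)+\eta$ and its analogue for $y$, taking $\delta=\epsilon$. For the third, linearity of $P$ gives $Px+Py=P(x+y)$, whence $\Vert x+y-v_1-v_2\Vert\le\Vert(x+y)-P(x+y)\Vert+\Vert Px-v_1\Vert+\Vert Py-v_2\Vert<d(x+y)+2\eta<d(x+y)+\epsilon$.

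The only genuinely delicate point is the one flagged above: the subadditivity of $d$ forces the two choices to be linked, and the clean way to link them is to move to the completion, where best approximations exist and the projection is linear. If one wishes to avoid the completion entirely, the same coupling is available elementarily: choose $v_1\in H_1$ near-optimal for $x$ and $w\in H_1$ near-optimal for $x+y$, then set $v_2=w-v_1$. The third condition holds by construction since $v_1+v_2=w$, the first by the choice of $v_1$, and the second follows from $\Vert y-v_2\Vert=\Vert(x+y-w)-(x-v_1)\Vert\le\Vert x+y-w\Vert+\Vert x-v_1\Vert$, at the cost of taking $\delta$ as large as $d(x+y)+d(x)-d(y)$ (plus the tolerances). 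Since the statement only requires the existence of some $\delta>0$, this version is equally admissible.
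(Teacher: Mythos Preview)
Your proof is correct, and both of your routes work. The approach, however, differs from the paper's.

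The paper argues intrinsically in $H$, without passing to the completion. It establishes an approximate first-order optimality criterion: for $\epsilon\le 1$, an element $u\in H_1$ satisfies $\Vert x-u\Vert\le d(x)+\epsilon$ if and only if $2\langle x-u,k\rangle\le\epsilon$ for every unit vector $k\in H_1$. Since this inner-product condition is \emph{additive} in the pair $(x,u)$, choosing $v_1$ that is $\delta$-good for $x$ and $v_2$ that is $\delta$-good for $y$ with $\delta=\epsilon/2$ immediately yields that $v_1+v_2$ is $\epsilon$-good for $x+y$. Your main argument instead outsources the linearity to the orthogonal projection in $\com H$; this is cleaner and more conceptual, and gives $\delta=\epsilon/3$. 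The paper in fact acknowledges precisely this completion route in the remark following the proof of Proposition~\ref{a-h-c:quotient-map}, so your approach is entirely in the spirit of the paper, just placed one level up.

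One comment on your elementary alternative (take $v_1$ near-optimal for $x$, $w$ near-optimal for $x+y$, and set $v_2=w-v_1$): it proves the lemma as literally stated, but the $\delta$ it produces can be of order $d(x)+d(x+y)-d(y)$ rather than of order $\epsilon$. Since the lemma is invoked to verify the parallelogram identity for the quotient norm, where all four approximations must be simultaneously tight, it is your projection argument (or the paper's inner-product argument) that is actually usable downstream; the second variant, while formally valid for the lemma, would not suffice for the application.
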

\begin{proof}

We shall consider for this proof that the Hilbert space is a real Hilbert space, the proof for a complex Hilbert space follows the same line.\\

Let us consider $x\in H$, $u\in H_1$ ; let us note $k= u-v$. Let $\epsilon >0$ and let $u$ be such that,

\begin{equation}
\Vert x-u\Vert \leq \min_{v\in H_1}\Vert x-v\Vert +\epsilon 
\end{equation}

then, for any $w\in H_1$,

\begin{equation}
\Vert x-u\Vert \leq \Vert x-v\Vert =\epsilon
\end{equation}

One has that,

\begin{equation}
\Vert x-v\Vert^2= \Vert x-u\Vert^2 + 2\langle x-u,k\rangle +\Vert k\Vert^2
\end{equation}

Therefore,

\begin{equation}
2\langle x-u, k\rangle \leq \Vert k\vert^2+\epsilon
\end{equation}

The previous computations enable us to conclude that

\begin{equation}
\Vert x-u\Vert \leq \min_{v\in H_1}\Vert x-v\Vert +\epsilon 
\end{equation}

if and only if, for any $k\in H_1$,

\begin{equation}
2\langle x-u, k\rangle \leq \Vert k\Vert^2+\epsilon
\end{equation}

Furthermore if $u\in H_1$ is such that for any $k\in H_1$,

\begin{equation}
2\langle x-u, k\rangle \leq \Vert k\Vert^2+\epsilon
\end{equation}

then, for any $k\in H_1$ such that $k\neq 0$,

\begin{equation}
2\langle x-u, \frac{k}{\Vert k\Vert}\rangle \leq \Vert k\Vert +\epsilon
\end{equation}

and so for any $k\in H_1$ such that $\Vert k\Vert =1$,

\begin{equation}
2\langle x-u, k\rangle \leq \epsilon
\end{equation}

Furthermore if for any $k\in H_1$ such that $\Vert k\Vert =1$,

\begin{equation}
2\langle x-u, k\rangle \leq \epsilon
\end{equation}

then for any $k\in H_1$,

\begin{equation}
2\langle x-u, \frac{k}{\Vert k\Vert}\rangle \leq \Vert k\Vert \epsilon
\end{equation}

Assume that $\epsilon \leq 1$, then for $\Vert k\Vert \geq \epsilon$,
\begin{equation}
2\langle x-u, \frac{k}{\Vert k\Vert}\rangle \leq \Vert k\Vert^2\leq \Vert k\Vert^2 + \epsilon
\end{equation}

and when $\Vert k\Vert \leq \epsilon$,

\begin{equation}
2\langle x-u, \frac{k}{\Vert k\Vert}\rangle \leq \epsilon^2\leq \Vert k\Vert^2+\epsilon
\end{equation}

Therefore,

\begin{equation}
\Vert x-u\Vert \leq \min_{v\in H_1}\Vert x-v\Vert +\epsilon 
\end{equation}

if and only if for any $k\in H_1$ such that $\Vert k\Vert =1$,

\begin{equation}
2\langle x-u, \frac{k}{\Vert k\Vert}\rangle \leq \epsilon
\end{equation}

Let $1\geq \epsilon>0$, let $\delta=\epsilon/2$, let $y\in H$ and let $u, v \in H_1$ be such that,

\begin{equation}
\Vert x-u\Vert \leq \min_{v\in H_1}\Vert x-v\Vert +\delta
\end{equation}

and,

\begin{equation}
\Vert x-u\Vert \leq \min_{v\in H_1}\Vert x-v\Vert +\delta
\end{equation}

Then for any $k\in H_1$ such that 

\begin{equation}
2\langle x+y-u-v,k\rangle\leq \epsilon
\end{equation}

and,

\begin{equation}
\Vert x+y-u-v\Vert \leq \min_{v\in H_1}\Vert x+y-v\Vert +\epsilon
\end{equation}

\end{proof}

Let us now prove Proposition \ref{a-h-c:quotient-map},

\begin{proof}

Let $H/H_1$ be the quotient vector space of $H$ with respect to $H_1$, let for any $x\in H$, 

\begin{equation}
\Vert x\Vert_0=\underset{u\in H_1}{\min}\Vert x-u\Vert
\end{equation} 

For any $u\in H_1$, $\Vert x+u\Vert_0 =\Vert x\Vert_0$; therefore $\Vert .\Vert_0$ factorizes through the quotient map $[.]: H\to H/H_1$, i.e.for any $x\in H$,

\begin{equation}
\Vert [x]\Vert_0=\underset{v\in H_1}{\min}\Vert x-v\Vert
\end{equation}

is well defined.\\

Assume that $\Vert[x]\Vert_0=0$ then there is a sequence $(x_n\in H_1,n\in \N)$ such that $\lim_{n\to \infty}x_n=x$ and as $H_1$ is closed in $H$, one has that $x\in H$. Therefore for any $x\in H$, 

\begin{equation}
\Vert [x]\Vert_0=0 \implies [x]=0
\end{equation}

The norm defined by a scalar product is rigid in the sense that any sequence that optimises Equation \ref{a-h-c:quotient-norm} is unique with respect to the Cauchy equivalence (Lemma \ref{a-h-c:sum-stable-approximation}); therefore one can show that the parallelogram identity also holds for the quotient norm. In other words, Lemma \ref{a-h-c:sum-stable-approximation} unables us to assert that for any $x,y\in H$ and $\epsilon>0$, there is $u,v\in H_1$ such that,

\begin{equation}
\vert \Vert [x+y]\Vert_0 +\Vert [x-y]\Vert_0 -2(\Vert [x]\Vert_0 +\Vert [y]\Vert_0)\vert\leq \vert \Vert x-u+y-v\Vert +\Vert x-u-(y-v)\Vert -2(\Vert x-u\Vert +\Vert y-v\Vert)\vert +\epsilon 
\end{equation}

and therefore for any $\epsilon >0$,

\begin{equation}
\vert \Vert [x+y]\Vert +\Vert [x-y]\Vert -2(\Vert [x]\Vert +\Vert [y]\Vert)\vert\leq \epsilon 
\end{equation}

Lemma \ref{a-h-c:parallelogram} shows that $\Vert.\Vert_0$ is the norm associated to a scalar product, which end the proof.\\

Finally, for any $x\in H$,

\begin{equation}
\Vert [x]\Vert_0\leq \Vert x\Vert 
\end{equation}

Therefore $\pi$ is continuous.
\end{proof}
%\vspace{0.5cm}

\begin{rem}
An other way to prove Proposition \ref{a-h-c:quotient-map} is to remark that if one completes $H$ and $H_1$, then if one notes $p$ the orthogonal projection on $H_1^\perp\subseteq \com H$, one has that for any $v\in H\subseteq \com H$,

\begin{equation}
\Vert p(v)\Vert =\underset{u\in \com H_1}{\min} \Vert v-u\Vert
\end{equation} 

Furthermore as $\Vert.\Vert$ is by definition continuous, one has that,
\begin{equation}
\underset{u\in \com H_1}{\inf} \Vert v-u\Vert=\underset{u\in H_1}{\inf} \Vert v-u\Vert
\end{equation}

Therefore $\Vert[v]\Vert^2= \langle p(v),p(v)\rangle$ which by polarisation defines a positive bilinear form (or sesquilinear form) on $H/H_1$; as $H_1$ is closed in $H$ it is also a definite bilinear form as for any $v\in H$, if $\langle(v),p(v)\rangle=0$ then $v$ in the closure of $H_1$ in $\com H$, but as $v\in H$ it must be in the closure of $H_1$ in $H$ which is $H$. Therefore it is a scalar product.\\

\end{rem}

\begin{prop}\label{a-h-c:coequalizer}

Let $(H,\Vert.\Vert)$ be a pre-Hilbert space and $H_1$ be a closed pre-Hilbert subspace of $H$; we shall note $i:H_1\to H$ the inclusion. The coequalizer $\coeq(i,0)$ exists and is equal to $H/H_1$.
\end{prop}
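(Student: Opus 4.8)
The plan is to recognise $\coeq(i,0)$ as the cokernel of the inclusion $i$ in $\ph$ and to check that the quotient map realises it. First I would invoke Proposition \ref{a-h-c:quotient-map}, which guarantees that $H/H_1$, equipped with the quotient norm $\Vert [x]\Vert_0=\inf_{u\in H_1}\Vert x-u\Vert$, is a pre-Hilbert space and that the quotient map $[\cdot]:H\to H/H_1$ is continuous, hence a genuine morphism of $\ph$. This supplies both the candidate object and the candidate coequalizing arrow.

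Next I would verify the coequalizing identity and set up the factorization. Since $[\cdot]\circ 0=0$, the equation $[\cdot]\circ i=[\cdot]\circ 0$ reduces to $[\cdot]|_{H_1}=0$, which holds because every $u\in H_1$ satisfies $[u]=0$ in $H/H_1$ by definition of the quotient. Then, given any test morphism $h:H\to D$ of $\ph$ with $h\circ i=h\circ 0$, i.e. $h|_{H_1}=0$, the universal property of the quotient vector space in $\ve$ provides a unique linear map $\bar h:H/H_1\to D$ with $\bar h\circ[\cdot]=h$; uniqueness of $\bar h$ even among merely linear maps is automatic because $[\cdot]$ is surjective.

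The only genuinely analytic point, and the step I expect to be the crux, is to show that this $\bar h$ is continuous for the quotient norm, so that it is a morphism of $\ph$ rather than merely of $\ve$. For $x\in H$ and any $u\in H_1$ one has $h(x)=h(x-u)$ since $h$ vanishes on $H_1$, whence $\Vert\bar h([x])\Vert=\Vert h(x)\Vert\le\Vert h\Vert\,\Vert x-u\Vert$; taking the infimum over $u\in H_1$ gives $\Vert\bar h([x])\Vert\le\Vert h\Vert\,\Vert[x]\Vert_0$, so $\bar h$ is bounded with $\Vert\bar h\Vert\le\Vert h\Vert$. This is precisely where the specific choice of quotient norm from Proposition \ref{a-h-c:quotient-map} is used; everything else is the formal cokernel argument. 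Assembling these observations shows that $([\cdot],H/H_1)$ satisfies the universal property of $\coeq(i,0)$, which completes the proof.
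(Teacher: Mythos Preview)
Your proof is correct and follows essentially the same route as the paper: reduce to the vector-space cokernel factorization and then verify continuity of the induced map via $\Vert\bar h([x])\Vert\le\Vert h\Vert\,\Vert x-u\Vert$ and passage to the infimum over $u\in H_1$. You are in fact slightly more careful than the paper in explicitly checking the coequalizing identity and invoking Proposition~\ref{a-h-c:quotient-map} for the pre-Hilbert structure on the quotient.
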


\begin{proof}
Let $\phi: H\to H_2$ be a morphism such that, 

\begin{equation}
\phi\circ i=0
\end{equation}

Then $\phi$ factors uniquely through, $\pi: H\to H/H_1$, i.e. there is a unique $\phi_1: H/H_1\to H_2$ such that, 

\begin{equation}
\phi_1\pi_=\phi
\end{equation} 

Let us now show that $\phi_1$ is continuous. For any $v\in H$,

\begin{equation}
\Vert\phi_1([v])\Vert \leq \Vert \phi\Vert \Vert v\Vert 
\end{equation}

therefore for any $u\in H_1$,

\begin{equation}
\Vert\phi_1([v])\Vert \leq \Vert \phi\Vert \Vert v+u\Vert 
\end{equation}

and,
\begin{equation}
\Vert\phi_1([v])\Vert \leq \Vert \phi\Vert \inf_{u\in H_1}\Vert v+u\Vert 
\end{equation}

Which show that $\phi_1$ is continuous and ends the proof.
\end{proof}

\begin{prop}\label{a-h-c:adjoint-colimit}
Let $F: \cat{C}\to \cat{D}$ be a left adjoint functor to $G:\cat{D}\to\cat{C}$; let $\mathbb{D}:D\to \cat{C}$ be a diagram in $\cat{C}$, $F(\colim \mathbb{D})=\colim F\mathbb{D}$.
\end{prop}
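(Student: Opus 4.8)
The statement is the standard fact that a left adjoint preserves colimits, and the plan is to verify it directly through universal properties rather than presupposing that $\colim F\mathbb{D}$ already exists. Write $\Phi_{X,Y}\colon\Hom_{\cat{D}}(F X,Y)\xrightarrow{\sim}\Hom_{\cat{C}}(X,G Y)$ for the adjunction isomorphism, natural in both variables, and let $(\iota_d\colon\mathbb{D}(d)\to \colim\mathbb{D})_{d\in D}$ denote the universal cocone. First I would note that $(F\iota_d\colon F\mathbb{D}(d)\to F(\colim\mathbb{D}))_d$ is again a cocone: for every edge $\delta\colon d\to d'$ of $D$ one has $\iota_{d'}\circ\mathbb{D}(\delta)=\iota_d$, and applying the functor $F$ preserves these commuting triangles. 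This furnishes the candidate colimiting cocone, and it remains only to prove its universality.

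The cleanest route is to compute the functor represented by $F(\colim\mathbb{D})$ on the category of cocones. For an arbitrary object $Y$ of $\cat{D}$ I would chain natural isomorphisms
\begin{equation}
\Hom_{\cat{D}}(F(\colim\mathbb{D}),Y)\cong \Hom_{\cat{C}}(\colim\mathbb{D},GY)\cong \lim_{d}\Hom_{\cat{C}}(\mathbb{D}(d),GY)\cong \lim_{d}\Hom_{\cat{D}}(F\mathbb{D}(d),Y),
\end{equation}
where the two outer isomorphisms are the adjunction $\Phi$, and the middle one is the universal property of $\colim\mathbb{D}$ (a morphism out of the colimit is exactly a compatible family of morphisms, i.e. a cocone, which is precisely the displayed limit of $\Hom$-sets over $D$). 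Naturality of $\Phi$ in its $\cat{C}$-variable is what guarantees that the limit in the third term is carried isomorphically to the limit of the $F$-transformed $\Hom$-sets in the fourth. Since every step is natural in $Y$, the composite exhibits $\Hom_{\cat{D}}(F(\colim\mathbb{D}),-)$ as naturally isomorphic to the functor $Y\mapsto\lim_d\Hom_{\cat{D}}(F\mathbb{D}(d),Y)$ of cocones under $F\mathbb{D}$, which is exactly the assertion that $F(\colim\mathbb{D})$ is a colimit of $F\mathbb{D}$.

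Finally I would confirm that the universal cocone read off from this representation really is $(F\iota_d)_d$ and not some other cocone with the same apex; this is done by tracing $\id_{F(\colim\mathbb{D})}$ through the chain and invoking the triangle identities for $\Phi$, which recovers the maps $F\iota_d$ constructed in the first step. The hard part is purely this bookkeeping of naturality: one must check that transporting an arbitrary cocone $(\phi_d\colon F\mathbb{D}(d)\to Y)$ to its adjuncts $(\Phi(\phi_d)\colon\mathbb{D}(d)\to GY)$ indeed yields a cocone over $\mathbb{D}$, which follows by applying naturality of $\Phi$ to each edge $\mathbb{D}(\delta)$. Everything else is the formal manipulation of universal properties, so I expect no genuine obstacle beyond this routine naturality verification.
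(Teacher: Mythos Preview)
Your argument is correct and is the standard representability proof that left adjoints preserve colimits. The paper, however, does not actually supply a proof of this proposition: it is stated as a known fact and then immediately applied in the subsequent corollary about $\com(H/H_1)$. So there is nothing to compare against beyond noting that what you have written is the expected justification the paper omits.
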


\begin{cor}
Let $H$ be a pre-Hilbert space and $H_1$ be a closed subspace of $H$, then

\begin{equation}
\com (H/H_1)= \com H/ \com H_1
\end{equation}
\end{cor}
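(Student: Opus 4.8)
The plan is to realize $H/H_1$ as a coequalizer and then transport it across the completion functor, using that left adjoints preserve colimits.

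First I would invoke Proposition \ref{a-h-c:coequalizer}: writing $i : H_1 \to H$ for the inclusion, one has $H/H_1 = \coeq(i,0)$ in $\cat{PHilb}$. Since $\com$ is left adjoint to the forgetful functor $U$ (Proposition \ref{a-h-c:adjoint}) and left adjoints preserve colimits (Proposition \ref{a-h-c:adjoint-colimit}), applying $\com$ gives
\begin{equation}
\com(H/H_1) = \com\,\coeq(i,0) = \coeq(\com i, \com 0),
\end{equation}
the coequalizer on the right now being taken in $\cat{Hilb}$.

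Next I would identify the two arrows of this coequalizer. Plainly $\com 0 = 0$. For $\com i$, functoriality of $\com$ together with Proposition \ref{a-h-c:completion-closure} identifies $\com H_1$ with the closure $\overline{H_1}$ of $H_1$ inside $\com H$, and under this identification $\com i : \com H_1 \to \com H$ is exactly the inclusion of the closed subspace $\com H_1$ into $\com H$. It then remains to check that $\coeq(\com i, 0) = \com H/\com H_1$ in $\cat{Hilb}$: since $\com H_1$ is complete it is closed in $\com H$, so $\com H/\com H_1$ with the quotient norm is again a Hilbert space (Proposition \ref{a-h-c:quotient-map}), and the argument of Proposition \ref{a-h-c:coequalizer} applies verbatim in the Hilbert setting to show that the quotient map $\com H \to \com H/\com H_1$ enjoys the universal property of the coequalizer of $\com i$ and $0$. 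Chaining these identifications yields $\com(H/H_1) = \com H/\com H_1$.

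I expect the main subtlety to be the third step — the careful identification of $\com i$ with the inclusion $\com H_1 \hookrightarrow \com H$ as a closed subspace — together with the accompanying point that, because the coequalizer is produced by preservation of colimits, it lives in $\cat{Hilb}$ and so its universal property must be verified against Hilbert-space targets. Both are dispatched by Proposition \ref{a-h-c:completion-closure} and by reusing the proof of Proposition \ref{a-h-c:coequalizer}, so no genuinely new computation is required.
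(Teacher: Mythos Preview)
Your proposal is correct and follows exactly the paper's approach: the paper's proof consists of the single line ``By Proposition \ref{a-h-c:coequalizer} and Proposition \ref{a-h-c:adjoint-colimit}'', and you have simply unpacked what that citation means, including the identification of $\com i$ via Proposition \ref{a-h-c:completion-closure} and the verification that the coequalizer in $\cat{Hilb}$ is again the quotient. No divergence from the paper.
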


\begin{proof}
By Proposition \ref{a-h-c:coequalizer} and Proposition \ref{a-h-c:adjoint-colimit}.

\end{proof}

\subsection{For $\ih$, the poset of subobjects is a complete join-lattice}\label{a-h-c:The poset of subobjects is a complet join-lattice}

\begin{rem}
Let $\cat{C}$ be any category and let $\phi_1:A\to B$, $\phi_2:C\to B$ be two monomorphism and $\phi:A\to B$ be a morphism. If there is $\psi:D\to B$ a monomorphism  and $\psi_1:A\to D$ and $\psi_2:B\to D$ two morphisms such that,

\begin{equation}
\psi \psi_1=\phi_1
\end{equation}

\begin{equation}
\psi \psi_2=\phi_2
\end{equation}

then,

\begin{equation}
\psi_1=\psi_2\phi
\end{equation}

Indeed as,
\begin{equation}
\psi \psi_2\phi=\phi_2\phi=\phi=\psi\psi_1
\end{equation}

and as $\psi$ is a monomorphism,

\begin{equation}
\psi_1=\psi_2\phi
\end{equation}
This justifies that in what follows we only talk about collections of monomorphism with a given codomain and not about functors.
\end{rem}

\begin{prop}\label{a-h-c:sup}
Let $(H_i\to H,i\in I)$ be a collection of monomorphism of $H$ for the category $\ih$; let us refer to $H_i\to H$ simply as $H_i$. \\

There is a cocone over the functor $(H_i,i\in I)$ in the category of monomorphism with codomain $H$ (of $\ph$), $\bigvee_{i\in I}H_i$, that is initial. In other words there is a monomorphism $\bigvee_{i\in I}H_i\to H$, and a collection of morphisms $(\eta_i:H_i\to \bigvee_{i\in I}H_i,i\in I)$, satisfying for any $i\in I$,

\begin{equation}
\bigvee_{i\in I} H_i \eta_i=H_i
\end{equation}

such that if a monomorphism $W\to H$ and a collection $(\phi_i:H_i\to W, i\in I)$ of morphisms satisfies,

\begin{equation}
W\phi_i=H_i
\end{equation}

then there is a unique morphisms $\bigvee_{i\in I}\phi_i$ such that,

\begin{equation}
\bigvee_{i\in I}\phi_i\eta_i=\phi_i
\end{equation}

i.e the following diagram commute,

\begin{equation}\label{a-h-c:sup-diagram}
\begin{tikzpicture}[baseline=(current  bounding  box.center),node distance=2cm, auto]
\node (A) {$H_i$ };
\node (B) [below of=A] {$\bigvee H_i$};
\node (C) [right of =A] {$H$};
\node (D) [below of =C] {$W$};
\draw[->] (A) to node [left] {$\eta_i $} (B);
\draw[->] (A) to node {$H_i$} (C);
\draw[->] (B) to node [below]{$\bigvee \phi_i$} (D);
\draw[->] (D) to node [right]{$W $} (C);
\draw[->]   (A) to node {$\phi_i $} (D);
\draw[->]   (B) [out=-90,in=180]to [rounded corners](3,-3)[out=0,in =0] to  node [right]{$\bigvee H_i $} (C)  ;
\end{tikzpicture}
\end{equation}

\end{prop}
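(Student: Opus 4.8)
The plan is to take the obvious candidate for the join---the closure in $H$ of the algebraic sum of the images---and verify the universal property directly; the crucial feature will be that restricting to isometries (morphisms of $\ih$) rather than arbitrary continuous maps keeps every image closed, which is exactly what guarantees the universal morphism both exists and is continuous. First I would record that each $H_i\to H$ is an isometry, so in particular injective, and I would identify $H_i$ with the closed subspace $\im(H_i)\subseteq H$ (closed because the image of a complete space under an isometry is complete). I then set $\bigvee_{i\in I}H_i=\overline{\sum_{i\in I}\im(H_i)}$, which as a closed subspace of a Hilbert space is again a Hilbert space with isometric inclusion into $H$, and I take each $\eta_i$ to be the corestriction of $H_i\to H$ to this subspace. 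The cocone identity $\bigvee_{i\in I}H_i\circ\eta_i=H_i$ and the fact that each $\eta_i$ is an isometry are then immediate from the construction.

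For initiality I would begin with an arbitrary cocone: a monomorphism $W\to H$ of $\ih$ together with maps $\phi_i\colon H_i\to W$ satisfying $W\circ\phi_i=H_i$. The key observation is that, since $W\to H$ is an isometry, its image $\im(W)$ is complete and hence closed in $H$; as $\im(H_i)\subseteq\im(W)$ for every $i$, closedness upgrades this to $\bigvee_{i\in I}H_i=\overline{\sum_{i\in I}\im(H_i)}\subseteq\im(W)$. I would then define $\bigvee_{i\in I}\phi_i$ to be the restriction to $\bigvee_{i\in I}H_i$ of the inverse of the isometric isomorphism $W\to\im(W)$. This is automatically an isometry and a morphism over $H$, and the factorization $\bigvee_{i\in I}\phi_i\circ\eta_i=\phi_i$ follows from the one-line identity $\bigvee_{i\in I}\phi_i(\eta_i(v))=\phi_i(v)$, using $W\circ\phi_i=H_i$.

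Uniqueness should come for free: any morphism $g$ over $H$ with $g\circ\eta_i=\phi_i$ satisfies $W\circ g=W\circ\bigvee_{i\in I}\phi_i$ (both composites being the inclusion of $\bigvee_{i\in I}H_i$ into $H$), so injectivity of $W$ (Proposition \ref{a-h-c:mono-epi}) forces $g=\bigvee_{i\in I}\phi_i$; equivalently, the subspaces $\im(\eta_i)$ span a dense part of $\bigvee_{i\in I}H_i$ on which $g$ is already determined, so continuity pins it down. The step I expect to be the main obstacle is precisely the containment $\bigvee_{i\in I}H_i\subseteq\im(W)$: this is where the isometry hypothesis is indispensable, for a merely continuous monomorphism $\im(W)$ need not be closed, the closure $\overline{\sum_{i\in I}\im(H_i)}$ could then leave $\im(W)$, and the inverse of $W\to\im(W)$ could fail to be continuous---so the initial cocone would not exist at all. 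Everything else is routine verification about corestrictions of isometries.
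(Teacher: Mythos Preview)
Your proposal is correct and follows essentially the same route as the paper: both take $\bigvee_{i\in I}H_i=\overline{\sum_{i\in I}\im H_i}$ with $\eta_i$ the corestriction, and both obtain the universal map as the inverse of the isometry $W$ restricted to its image (which contains the closure precisely because $\im W$ is closed). Your write-up is in fact more explicit than the paper's about why $\im W$ is closed and about uniqueness, but the argument is the same.
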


\begin{proof}

Let $W\to H$ be a monomorphism and let $(\phi_i:H_i\to W, i\in I)$ be a collecton of morphisms that satisfies,

\begin{equation}
W\phi_i=H_i
\end{equation}

Let $\overline{\sum_{i\in I}H_i}$ be the closure of the pre-Hilbert space generated by the $(\im H_i,i\in I)$. Let,

\begin{equation}
\bigvee_{i\in I}H_i=\overline{\sum_{i\in I}H_i}\to H
\end{equation}

let for $i\in I$, $\eta_i:H_i\to \bigvee_{i\in I}H_i$ be $H_i$ with codomain restricted to $\overline{\sum_{i\in I}H_i}$; and similarly let $\bigvee_{i\in I}H_i\to H$ be the inclusion of $\overline{\sum_{i\in I}H_i}$ into $H$. Then there is a unique linear application $\bigvee_{i\in I}\phi$ such that,

\begin{equation}
\bigvee_{i\in I}\phi_i\eta_i=\phi_i
\end{equation}

Let us note $\bigvee_{i\in I}\phi_i$ as $\Phi$, then for any $v\in \bigoplus_{i\in I}H_i$, 

\begin{equation}
\Phi(\sum_{i\in I}H_i(v_i))=  \sum_{i\in I} \phi_i(v_i)
\end{equation}

and the image of $W$ is $\overline{\sum_{i\in I}H_i}$ and $\Phi$ is the inverse of $W|^{\bigvee_{i\in I}H_i}$. Therefore $\Phi$ is also an isomety which ends the proof.

\end{proof}
%\vspace{0.5cm}

\begin{defn}\label{a-h-c:subobject}
Let $\cat{C}$ be a category and $A$ and object of $\cat{C}$. A subobject of $A$ is an isomorphism class of monomorphisms, where two monomorphims $i:B\to A$ and $j:B_1\to A$ are equivalent if and only if there is an isomorphims $\phi:B\to B_1$ such that $j\phi=i$.\\

One can define a preorder on monomorphisms as follows,

\begin{equation}
i\leq j\quad \iff \quad \exists \phi:B\to B_1, j\phi=i
\end{equation}

The preoder defines an order between classes of equivalence sets; we refer the collection of these classes, equipped  with the order, as the subobject poset $\Sub(A)$.

\end{defn}

\begin{rem}
Let $\cat{C}$ be any category, let $A\to C$, $B\to C$ be two monomorphism such that $[A]\leq [B]$. Then there is $\phi:A\to B$ such that, 

\begin{equation}
B\phi= A
\end{equation}

Let $\psi,\psi_1:D\to A$ be two morphisms such that,

\begin{equation}
\phi\psi=\phi\psi_1
\end{equation}   

then $B\phi\psi= B\phi \psi_1$ and, $A\psi =\psi_1$. So $\psi=\psi_1$, and this show that $\phi$ is a monomorphism.
\end{rem}

\begin{cor}\label{a-h-c:join-lattice-isometry}
Let $H$ be a Hilbert space, the "poset" $\Sub(H)$ of $\ih$ is a complete join-lattice, i.e. for any set $I$ and collection of objects $([H_i],i\in I)$ of $\Sub(H)$ there is a subobject $\vee_{i\in I} [H_i]$ such that for any object $J$ of $\Sub(H)$ such that for any $i\in I$, $[H_i]\leq J$, one has that,

\begin{equation}
\vee_{i\in I}[H_i]\leq J
\end{equation}  

\end{cor}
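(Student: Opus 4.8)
The plan is to recognize that the initial cocone $\bigvee_{i\in I}H_i=\overline{\sum_{i\in I}H_i}\to H$ constructed in Proposition \ref{a-h-c:sup} already represents the desired join, so that the corollary is merely a translation of the universal property of that cocone into the language of the order on $\Sub(H)$. First I would fix for each $i\in I$ a monomorphism $H_i\to H$ representing $[H_i]$ and set $\vee_{i\in I}[H_i]:=[\bigvee_{i\in I}H_i]$. This is a legitimate element of $\Sub(H)$ because the inclusion $\overline{\sum_{i\in I}H_i}\hookrightarrow H$ is injective, hence a monomorphism of $\ih$ by Proposition \ref{a-h-c:mono-epi}.

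Next I would verify that $\vee_{i\in I}[H_i]$ is an upper bound for the family. Proposition \ref{a-h-c:sup} provides isometries $\eta_i:H_i\to\bigvee_{i\in I}H_i$ satisfying $\bigvee_{i\in I}H_i\,\eta_i=H_i$, and this is exactly a witness of $[H_i]\leq[\bigvee_{i\in I}H_i]$ in the sense of Definition \ref{a-h-c:subobject}. Hence $\vee_{i\in I}[H_i]$ dominates every $[H_i]$.

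For minimality, let $J\in\Sub(H)$ satisfy $[H_i]\leq J$ for all $i$, choose a representing monomorphism $W\to H$, and for each $i$ a morphism $\phi_i:H_i\to W$ of $\ih$ with $W\phi_i=H_i$. This is precisely the cocone data to which the universal property of Proposition \ref{a-h-c:sup} applies, yielding a morphism $\bigvee_{i\in I}\phi_i:\bigvee_{i\in I}H_i\to W$ with $W\,\bigvee_{i\in I}\phi_i=\bigvee_{i\in I}H_i$. I would then remark that this comparison map is automatically an isometry: since both $W$ and $\bigvee_{i\in I}H_i$ are isometries into $H$ and $W\,\bigvee_{i\in I}\phi_i=\bigvee_{i\in I}H_i$, one gets $\Vert\bigvee_{i\in I}\phi_i(v)\Vert=\Vert W\bigvee_{i\in I}\phi_i(v)\Vert=\Vert\bigvee_{i\in I}H_i(v)\Vert=\Vert v\Vert$ for every $v$. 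Thus $\bigvee_{i\in I}\phi_i$ is a morphism of $\ih$ and witnesses $[\bigvee_{i\in I}H_i]\leq J$.

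The argument is essentially formal, so the only real obstacle is the bookkeeping on isomorphism classes: one must check that the construction is independent of the chosen representatives $H_i\to H$ and $W\to H$, which holds because switching a representative composes the witnessing morphisms with an isomorphism of $\ih$ and therefore preserves the order relations. The mild subtlety worth flagging is that the comparison morphism must genuinely be an isometry, not merely a continuous linear map, so that the inequality $[\bigvee_{i\in I}H_i]\leq J$ holds in the subobject poset of $\ih$ and not only in that of $\ph$; this is exactly what the automatic-isometry observation above secures.
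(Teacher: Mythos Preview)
Your proof is correct and follows the same approach as the paper: both reduce the corollary to Proposition \ref{a-h-c:sup} by taking $\vee_{i\in I}[H_i]=[\bigvee_{i\in I}H_i]$ and reading the universal property of the initial cocone as the least-upper-bound property in $\Sub(H)$. Your write-up is more explicit (you spell out the upper-bound step via $\eta_i$, the independence of representatives, and the automatic-isometry argument for $\bigvee_{i\in I}\phi_i$), whereas the paper simply invokes Proposition \ref{a-h-c:sup}; note that the isometry of $\bigvee_{i\in I}\phi_i$ is already established at the end of the proof of that proposition, so your extra check is sound but redundant.
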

\begin{proof}

Let $J=[W]$ be a subobject of $H$ then, applying Proposition \ref{a-h-c:sup}, one gets that,

\begin{equation}
[\vee_{i\in I}H_i]\leq [W]
\end{equation} 

therefore, $\vee_{i\in I}[H_i]=[\vee_{i\in I}H_i]$.
\end{proof}

%\nocite{*}
%\bibliographystyle{IEEEtran}
%\bibliographystyle{plain}
\bibliographystyle{ieeetr}
\bibliography{bintro}
%\input{appendix/appendix}

%\cleardoublepage

%\bibliographystyle{plain}
%\refstepcounter{chapter}
%\addcontentsline{toc}{ chapter }{\ bibname}
%\bibliography{bintro/bintro}
%\bibliographystyle{IEEEtran}
%\bibliographystyle{plain}

%\bibliography{bintro/bintro}

%\tableofcontents

\end{document}